\renewcommand\nomgroup[1]{%
	\item[\bfseries
	\ifstrequal{#1}{M}{Matrices}{%
		\ifstrequal{#1}{V}{Vectors}{%
			\ifstrequal{#1}{Sets}{Sets}{}}}%
	]}
\def\BState{\State\hskip-\ALG@thistlm}
\colorlet{darkred}{red!80!black}
\colorlet{darkgreen}{green!60!black}
\colorlet{darkblue}{blue!80!black}
\colorlet{darkorange}{orange!70!black}
\definecolor{Green}{cmyk}{1, 0.2, 0.4, 0.1}
\definecolor{Orange}{cmyk}{0, 0.61, 0.87, 0}
\definecolor{Purple}{rgb}{0.75, 0.0, 1.0}
\definecolor{purple}{rgb}{0.5,0,0.5}
\definecolor{dgreen}{rgb}{0.1,0.9,0.5}
\definecolor{gabysgreen}{cmyk}{0.80, 0.1, 0.90, 0}
\newcommand{\irg}[2]{[#1\!:\!#2]}
\newcommand{\R}{\mathbb{R}}						
\newcommand{\N}{\mathbb{N}}						
\newcommand{\CPP}{\mathcal{CPP}}				
\newcommand{\x}{\vc{x}}							
\renewcommand{\u}{\vc{u}}						
\newcommand{\V}{\mathsf{V}}						
\newcommand{\lrbr}[1]{\left\lbrace #1 \right\rbrace} 
\newcommand\ga{\alpha}
\newcommand\gb{\beta}
\newcommand\gd{\delta}
\newcommand\gc{\gamma}
\renewcommand\ge{\epsilon}
\newcommand\gl{\lambda}
\newcommand\ggl{\bm{\lambda}}
\newcommand\mmu{\bm{\mu}}
\newcommand\gp{\eta}
\def\x{\mathbf x}
\def\a{\mathbf a}
\def\y{\mathbf y}
\def\z{\mathbf z}
\def\s{\mathbf s}
\def\v{\mathbf v}
\def\w{\mathbf w}
\def\p{\mathbf p}
\def\f{\mathbf f}
\def\oo{\mathbf o}
\def\e{\mathbf e}
\def\u{\mathbf u}
\def\b{\mathbf b}
\def\cc{\mathbf c}
\def\g{\mathbf g}
\def\q{{\mathbf q}}
\def\AA{{\mathcal A}}
\newcommand\BB{{\mathcal B}}
\newcommand\CC{{\mathcal C}}
\newcommand\FF{{\mathcal F}}
\newcommand\GG{{\mathcal G}}
\newcommand\II{{\mathcal I}}
\newcommand\KK{{\mathcal K}}
\newcommand\MM{{\mathcal M}}
\newcommand\PP{{\mathcal P}}
\def\SS{{\mathcal S}}
\newcommand\TT{{\mathcal T}}
\newcommand\UU{{\mathcal U}}
\newcommand\VV{{\mathcal V}}
\newcommand\XX{{\mathcal X}}
\newcommand\Ab{{\mathsf{A}}}
\newcommand\Bb{{\mathsf B}}
\newcommand\Cb{{\mathsf C}}
\newcommand\Eb{{\mathsf E}}
\newcommand\Gb{{\mathsf G}}
\newcommand\Hb{{\mathsf H}}
\newcommand\Ib{{\mathsf I}}
\newcommand\Mb{{\mathsf M}}
\newcommand\Ob{{\mathsf O}}
\newcommand\Qb{{\mathsf Q}}
\newcommand\Ub{{\mathsf U}}
\newcommand\Vb{{\mathsf V}}
\newcommand\Wb{{\mathsf W}}
\newcommand\Xb{{\mathsf X}}
\newcommand\Yb{{\mathsf Y}}
\newcommand\Zb{{\mathsf Z}}
\newcommand{\tr}[1]{\mathrm{Tr}(#1)}								
\newcommand{\T}{^\mathsf{T}}	
\DeclareMathOperator{\conv}{conv}									
\DeclareMathOperator{\diag}{diag}									
\DeclareMathOperator{\Diag}{Diag}									
\DeclareMathOperator{\sign}{sign}									
\def\beq#1{$$}														
\DeclareMathOperator{\interior}{int} 								
\DeclareMathOperator{\cl}{cl} 									    
\DeclareMathOperator{\dom}{dom}										
\def\ol#1{{\overline #1}}
\def\eps{\varepsilon}
\def\ol#1{{\overline #1}}
\def\bea#1{\begin{array}{#1}}
	\def\ea{\end{array}}
\def\ignore#1{}
\theoremstyle{plain}
\newtheorem{thm}{Theorem} 
\newtheorem{exmp}{Example} 
\newtheorem{asm}{Assumption}
\newtheorem{lem}[thm]{Lemma}
\newtheorem{prop}[thm]{Proposition}
\newtheorem{cor}[thm]{Corollary} 
\newtheorem*{rem}{Remark}
\theoremstyle{definition}
\newtheorem{defn}{Definition} 
\theoremstyle{remark}
\begin{document}
	\renewcommand\Authands{ and }
	\title{Concave tents:\\ a new tool for constructing concave reformulations\\ of a large class of nonconvex optimization problems}
	
	\author{
		Markus Gabl 
		\footnote{Faculty of Mathematics, University of Vienna, Austria; email: {\tt markus.gabl@univie.ac.at}}} 
	\maketitle
	\begin{abstract}
		Optimizing a nonlinear function over nonconvex sets is challenging since solving convex relaxations may lead to substantial relaxation gaps and infeasible solutions that must be "rounded" to feasible ones, often with uncontrollable losses in objective function performance. For this reason, these convex hulls are especially useful if the objective function is linear or even concave, since concave optimization is invariant to taking the convex hull of the feasible set. Motivated by this observation, we propose the notion of concave tents, which are concave approximations of the original objective function that agree with this objective function on the feasible set, and allow for concave reformulations of the problem. Concave tents, therefore, are special cases of concave extensions, which were originally introduced by Crama 1993. In this text, we derive such concave tents for a large class of objective functions as the optimal value functions of conic optimization problems. Hence, evaluating our concave tents requires solving a conic problem. Interestingly, we can find supergradients by solving the conic dual problem, so that differentiation is of the same complexity as evaluation. 
		For feasible sets that are contained in the extreme points of their convex hull, we construct these concave tents in the original space of variables. For general feasible sets, we propose a double lifting strategy, where the original optimization problem is lifted into a higher-dimensional space in which the concave tent can be constructed with a similar effort. We investigate the relation of the so-constructed concave tents to concave envelopes and naive concave tents based on concave quadratic updates. Based on these ideas, we propose a primal heuristic for a class of robust discrete quadratic optimization problems that can be used instead of classical rounding techniques. Numerical experiments suggest that our techniques can be beneficial as an upper-bounding procedure in a branch-and-bound solution scheme. 
		
	\end{abstract}
	{\bf Keywords:} Nonconvex Optimization, Conic Optimization, Copositive Optimization, Global Optimization 
	\newpage
	
	\pagebreak	
	\section{Introduction}
	In this paper, we consider the optimization problem 
	\begin{align}\label{eqn:GeneralConvexOverNonconvex}
		\inf_{\x\in\R^n} \lrbr{f(\x) \colon \x\in\XX},
	\end{align}
	where $f \colon \R^n\rightarrow \R\cup\lrbr{\infty,-\infty}$  is a nonlinear function and $\XX\subseteq \R^n$ is a compact set, and both may be nonconvex. This is an important yet challenging problem class. Usually, it is tackled via global optimization techniques, that vary depending on the structure of $\XX$ and $f$. If $f$ were linear, one could solve the problem as a convex optimization problem if the convex hull of $\XX$, which is also a compact set denoted by $\conv(\XX)$, can be handled in a tractable manner. For example, if $\XX\subseteq\lrbr{0,1}^n$, then the convex hull has a tractable polyhedral description in many interesting cases and there are many techniques to approximate it algorithmically.  However, already in the nonlinear convex case, and even if a full description of the convex hull of $\XX$ is available, the resulting convex relaxation can in general only deliver a lower bound, since a convex nonlinear function may attain solutions at points in $\conv(\XX) \setminus \XX$. In addition to the problem of potentially weak bounds, projecting the optimal solution of the relaxation onto $\XX$, for example via rounding,  may introduce a loss in performance that can be hard to control. 
	
	However, in this text, we want to go another route, where we replace $f$ with a function $g$ that is concave on $\conv(\XX)$ but agrees with $f$ on $\XX$, which is called a \textit{concave tent} (see \cite{crama_concave_1993,tawarmalani_convex_2002}). Concave functions attain their optimum over compact convex sets at extreme points of these sets, so that for a general concave function $\hat{g}$ that overestimates $f$ on $\XX$ we have that 
	\begin{align*}
		\inf_{\x\in\R^n} \lrbr{f(\x) \colon \x\in\XX} \leq  \inf_{\x\in\R^n} \lrbr{\hat{g}(\x) \colon \x\in\XX}= 
		\inf_{\x\in\R^n} \lrbr{\hat{g}(\x) \colon \x\in\conv(\XX)},
	\end{align*}
	and we will show that the mere existence of a concave tent $g$ of $f$ over $\XX$ guarantees that the gap can be closed for $\hat{g}=g$.  Hence, we reformulate nonconvex minimization over a nonconvex set into concave minimization over a convex set. Both, of course, can be hard. Nonetheless, the transformation allows for a different set of solution strategies to be applied that can take advantage of optimization techniques over $\conv(\XX)$ and certain properties of $g$. We will particularly be interested in using these reformulations for the sake of producing upper bounds, which are very useful in global optimization procedures since they often allow for the elimination of potential solution candidates. Obtaining such a concave tent seems elusive at first, but we will show that it can be constructed for a large class of choices of $f$ and $\XX$ by exploiting conic optimization theory and convex geometry.  
	
	Concave tents are special cases of so-called \textit{concave extensions}. These are concave overestimators of $f$ which are exact on pre-specified subsets of a convex domain. They were already introduced in \cite{crama_concave_1993,tawarmalani_convex_2002} and in \cref{sec:Definitions existence and implication for optimization} will discuss more closely how concave tents relate to concave extensions. However, our exposition offers a new perspective on this topic that differs significantly from earlier treatments in \cite{crama_concave_1993,tawarmalani_convex_2002}. Indeed, their approach sought to generate relaxations and, therefore, lower bounds of \eqref{eqn:GeneralConvexOverNonconvex}, by replacing $f$ with a convex function that is identical to $f$ on certain subsets of its domain. Hence, \cite{tawarmalani_convex_2002} spoke of convex (rather than concave) extensions (i.e.\ convex underestimators that are exact on a pre-specified subset of a convex domain), while \cite{crama_concave_1993} dealt with concave extensions but considered maximization problems. Their idea was, that a convex underestimator that is exact on certain points of the domain should yield strong relaxations by design. To the contrary, we build concave reformulations in an effort to construct upper bounds, harnessing the useful properties of concave functions that let us exploit knowledge on $\conv(\XX)$.   
	
	This shift in perspective opens up a new set of techniques to approach (\ref{eqn:GeneralConvexOverNonconvex}), namely concave optimization, i.e.\ minimizing a concave function over a convex set. This class of problems is NP-hard, as it contains NP-hard special cases (see e.g.\ \cite{garey_simplified_1976,ivanescu_network_1965,mangasarian_variable-complexity_1986}). The fact that concave problems attain their optimum at an extreme point of their feasible set, a considerable advantage, motivated a plethora of literature that sought to reformulate nonconvex optimization problems into concave ones. An early approach by \cite{raghavachari_connections_1969} concerns zero-one integer programming, where the binarity constraint on the decision $\x$ can be relaxed to the unit box constraint after the objective is augmented by a nonnegative concave quadratic term which vanishes at the extreme points of the unit box, hence preserving the global optimum. In \cite{pardalos_constrained_2006}, the authors summarize similar approaches for an array of problems first introduced in \cite{falk_linear_1973, frieze_bilinear_1974,konno_maximization_1976,mangasarian_characterization_1978,thieu_relationship_1980}. In \cite{mangasarian_machine_1996} the author used concave reformulations  to tackle misclassification minimization and feature selection. A similar approach was proposed by \cite{rinaldi_concave_2010}, where concave reformulations of zero-norm optimization problems over polyhedral sets were provided and then solved via a Frank-Wolfe-type algorithm.  A  more recent approach involving concave reformulations is given by \cite{di_lorenzo_concave_2012}, tackling a category of portfolio optimization with zero norm regularization. 
	
	An important consideration that merits the shift to a concave optimization perspective is the elevated role of convex hulls of nonconvex sets. These feature prominently in the global optimization and, specifically, mixed integer linear optimization literature, see e.g. \cite{pardalos_constrained_2006,papadimitriou_combinatorial_1998}). Convex hulls of quadratically constrained feasible sets are studied in \cite{burer_how_2017} and references therein. While convex hulls and approximations thereof are usually used to generate lower bounds, we will show that concave tents allow us to exploit this machinery for the sake of computing upper bounds, which are also important in global optimization schemes.     
	
	Approaches involving convex hulls are regularly used in conjunction with linearization techniques. One of the most popular instances of such machinery is given by copositive reformulations of nonconvex quadratically constrained quadratic problems. These rest on the identity 
	\begin{align}\label{thm:chlifting}	
		\inf_{\x\in\R^n}\lrbr{\x\T \Qb\x +2 \q\T\x\colon \x\in \XX} =			\inf_{\x\in\R^n,\, \Xb\in\SS^n}\lrbr{\tr{\Qb\T\Xb}+ 2 \q\T\x \colon 
			\begin{bmatrix}
				1 & \x\T \\ \x & \Xb
			\end{bmatrix}
			\in \GG(\XX)
		},
	\end{align}
	where $\SS^n$ is the set of $n\times n$ symmetric matrices and 
	\begin{align*}
		\GG(\XX)\coloneqq \mathrm{clconv} \left\lbrace
		\left [ \bea{c}
		1 \\ \x  
		\ea \right ]
		\left [ \bea{c}
		1 \\ \x  
		\ea \right ]\T
		: \x\in \XX \right\rbrace\subseteq \SS^{n+1} \,, 
	\end{align*}
	which is proved for example in \cite[Theorem 1]{bomze_optimization_2023}. The approach is based on a lifting where the terms $x_ix_j$ are replaced by $(\Xb)_{ij}$ (or equivalently, $\x\x\T$ is replaced by $\Xb$). Since the lifted variables are constrained to the set $\GG(\XX)$, whose extreme points are dyadic matrices (i.e. matrices of the form $\x\x\T$), there is an optimal solution to the convex problem that can be decomposed into a feasible solution of the original problem, which by the equality $\tr{\Qb\Xb} = \tr{\Qb\x\x\T} = \x\T\Qb\x$ yields the same optimal value so that the relaxation gap is nullified. The key difficulty of this approach is the characterization of $\GG(\XX)$, which is only known for special instances of $\XX$, see for example \cite{anstreicher_computable_2010,burer_copositive_2009,burer_copositive_2012,burer_gentle_2015,eichfelder_set-semidefinite_2013,burer_second-order-cone_2013,sturm_cones_2003,yang_quadratic_2018}. These results have been applied, for example, to fractional optimization \cite{amaral_nonconvex_2019,he_convexification_2024} and stochastic 0-1 optimization \cite{natarajan_mixed_2011}.

	Recently, $\GG(\XX)$ has also been used to generate underestimators of optimal value functions of quadratically constrained quadratic problems in \cite{gabl_finding_2024}, which presents a continuation of ideas earlier proposed in \cite{yildirim_alternative_2022,anstreicher_convex_2012}. We want to highlight at this point, that this set will play a central role in our exposition as well. We will several times invoke \cite[Proposition 4]{gabl_finding_2024}, which states that if $\XX$ is compact, then $\GG(\XX)$ is compact even if the "$\cl$" operator were omitted from its definition so that in that case every element in $\GG(\XX)$ is a convex combination of matrices of the form  $\y\y\T\colon \y\in\lrbr{1}\times\XX$, and not merely a clusterpoint of such convex combinations.
	
	\section{Contribution}
	This text aims at achieving the following contributions: 
	\begin{itemize}
		\item We formally introduce the notion of concave tents and discuss some necessary and sufficient conditions for their existence, as well as their relations to general concave extensions. We show that nonconvex optimization over a nonconvex set is in a certain sense equivalent to optimizing the respective concave tent, if it exists, over the convex hull of that set. 
		
		\item We will construct concave tents of functions of a certain class of sets that are subsets of the extreme points of their convex hull. These concave tents will be characterized as the optimal value functions of conic optimization problems related to copositive optimization theory.  
		
		\item For the case of general feasible sets we employ a double lifting strategy, which can be interpreted as a generalization of the traditional copositive optimization paradigm. 
		
		\item We discuss the quality of our concave tents in comparison to the concave envelope and a classical construction of concave tents based on concave quadratic updates, and show that our concave tents are sandwiched between the two and that the lower bound that is the concave envelope can be attained. 
		
		\item We will discuss obtaining $\epsilon$-supergradients of our concave tents based on conic duality theory. 
		
		\item We then use our theory to construct a primal heuristic for a class of robust quadratic discrete optimization problems. Numerical experiments demonstrate that this heuristic can be useful in a branch and bound (B\&B) solution scheme. 
		
	\end{itemize}
	
	\subsection{Notation}
	Throughout the paper, matrices are denoted with sans-serif capital letters, e.g.,~$\Eb$ is the matrix of all ones, $\Ib$ is the identity matrix, and $\Ob$ the matrix of all zeros where the order will be clear from the context. Vectors will be given as boldface lowercase letters, for instance, the vector of all ones (a column of $\Eb$) is $\e$, the vector of zeros is $\oo$ and the vector $\e_i$ is the $i$-th column of $\Ib$. We will also use lowercase letters of the same symbol with indices to refer to entries of a vector denoted by that symbol, for example, $a_i = (\a)_i$ for $\a\in\R^n$. If a scalar takes the role usually assigned to a matrix, e.g.\ in a special case where an $n\times n$ matrix becomes a scalar because $n=1$, we will use italic capital letters for that matrix. For two matrices $\Ab$ and $\Bb$ we will use the binary function $\Ab\bullet\Bb \coloneqq \tr{\Ab\T\Bb}$ as the inner product known as the Frobenius product. For a symmetric matrix $\Xb$ the function $\diag(\Xb)$ maps to the vector of diagonal entries of $\Xb$.  
	
	Sets will mostly be referred to using calligraphic letters. For example, $\SS^n$ is the space of symmetric $n\times n$ matrices, $\SS_+^n$ will be the cone of positive semidefinite matrices, its interior, the cone of positive definite matrices is denoted $\SS_{++}^n$. Also, Euclidean balls of radius $r$ centered at $\cc$ will be denoted $\BB_r(\cc)\coloneqq \lrbr{\x\in\R^n\colon \|\x-\cc\|_2\leq r}\subseteq\R^n$. Exceptions to this rule are the $n$-dimensional Euclidean space $\R^n$, its nonnegative orthant $\R^n_+$, or the index set $\irg{i}{j}= \lrbr{i,i+1,\dots,j-1,j}$, where $i<j$ are integer numbers and finally the standard simplex $\Delta_n\coloneqq \lrbr{\x\in\R^n_+\colon \e\T\x=1}$. For a set $\AA$ we denote $\cl(\AA),\interior(\AA),\conv(\AA)$ and $\mathrm{cone}(\AA)$ its closure, interior, convex hull, and the union of its conic hull and the origin, respectively. For a convex function $f$ the set $\dom(f)\coloneqq \lrbr{\x\in\R^n\colon f(\x)<\infty}$ is its effective domain and the function $\cl(f)$ is its closure, i.e.\ its lower semicontinuous envelope. If $f$ is a concave function we define $\dom(f) \coloneqq \dom(-f)$,  and $\cl(f) = -\cl(-f)$. A convex function is proper if it is bigger than $-\infty$ everywhere and smaller than $\infty$ in at least one point. A concave function $f$ is proper if $-f$ is a proper convex function. We say a convex (or concave) function $f$ is closed if $f = \cl(f)$ and $f$ is proper or equal to $-\infty$ ($\infty$) if it is improper. We will also use the shorthand $(\x,\Xb)$ to indicate the matrix
	\begin{align*}
		\left [ \bea{cc}
		1 & \x\T \\ \x & \Xb
		\ea\right ]\in\SS_+^n.
	\end{align*}

	\section{Definition, existence, and implication for optimization}\label{sec:Definitions existence and implication for optimization}
	We start by formally defining what we mean by a concave tent. 
	\begin{defn}\label{def:ConcaveTent}
		Let $\XX\subseteq\R^n$ and let $f\colon \R^n\rightarrow \R$ be a function. Then a concave tent of $f$ over $\XX$ is any function $g\colon \R^n\rightarrow \R $ such that 
		\begin{itemize}
			\item[a)] $g$ is concave over $\conv(\XX)$, and
			\item[b)] $f(\x) = g(\x)$ for all $\x\in\XX$. 
		\end{itemize}
	\end{defn}
	
	The above definition is a specialization of the definition of so-called concave extensions, originally introduced in \cite{crama_concave_1993,tawarmalani_convex_2002}, which are defined as follows. Let $\CC\subseteq\R^n$ be a convex set and let $\XX\subseteq \CC$. A concave extension of a function $f\colon \XX\rightarrow\R$ over $\CC$ is any concave function $g\colon\CC\rightarrow\R$ such that $g(\x) = f(\x),\ \forall \x\in\XX$ (see \cite[Definition 1.]{tawarmalani_convex_2002}). The definition of a concave tent can be recovered by requiring $\CC = \conv(\XX)$. In addition, we only require $g$ to be concave on $\conv(\XX)$, but that is a nuance of little consequence. Similarly, our nomenclature differs slightly as we speak of concave tents over $\XX$, rather than over $\CC = \conv(\XX)$. However, this seemingly minor specialization is important since concave tents allow us to construct concave reformulations of \eqref{eqn:GeneralConvexOverNonconvex}. This represents a significant departure from the originally intended use of concave extensions, whose convex counterparts were introduced to build convex relaxations, where the idea was to have a relaxation that is exact at least in some parts of the domain. Nonetheless, many theoretical aspects of concave extensions, mainly explored in \cite{tawarmalani_convex_2002}, can be transferred to the realm of concave tents. Still, in the remainder of this section, we will provide a self-contained discussion of some of those aspects that is geared towards preparing the reader for the rest of the present text. 
	
	Firstly, note that concave tents are not necessarily concave functions, as they can be nonconcave outside of $\conv(\XX)$, and that they overestimate $f$ on $\XX$ since there they are equal, but not necessarily over $\conv(\XX)$ unless $f$ is convex.
	Now, we want to convince the reader that the above concept is meaningfully distinct from general concave overestimators as well as from the concave envelope, i.e. the smallest concave overestimator of that function over $\XX$. Indeed, an arbitrary concave overestimator may fail to meet the requirement b) in \cref{def:ConcaveTent}. Take as an example the function that is $\infty$ everywhere, which is concave over any set $\XX$ but differs from any function $f$ finite on that set. On the other hand, consider the concave envelope of $f(\x) = x^2$ over the set $\XX  = \lrbr{-1,1}$, which is just the function that is a constant equal to one within the interval $[0,1]$ and $-\infty$ everywhere else. It is a concave tent by \cref{def:ConcaveTent}, but that is also the case for the function that we obtain by adding to this concave envelope the function $1-x^2$, which also demonstrates that concave tents are not unique. However, we still can show the following simple relation between concave tents and concave envelopes. 
	
	\begin{prop}
		If a concave tent of a function $f$ over a set $\XX$ exists, then the concave envelope of $f$ over $\XX$ is a concave tent. 
	\end{prop}
	\begin{proof}
		Let $h$ be the hypothesized concave envelope and $g$ be an arbitrary concave tent, then for any $\x\in\XX$ we have $f(\x)\leq h(\x)\leq g(\x) =f(\x)$, since $h$ is the smallest concave overestimator of $f$ on $\XX$, hence also smaller than $g$ on that domain. Thus, $h$ is a concave function that agrees with $f$ over $\XX$, i.e. it is a concave tent. 
	\end{proof}

	It is, however, easy to see that a concave tent does not necessarily exist. For example, consider $f(x) = \max\lrbr{-x,x-1}, \ \XX \coloneqq\lrbr{0,1/2,1}$. We see that a concave tent is impossible since there is a point $x = 1/2$ so that any function $g$ that agrees with $f$ on $\XX$ fulfills $ -0.5 = g(1/2) = g((1/2) 0 +(1/2) 1)<1/2 g(0) +1/2 g(1) = 0$ so that $g$ cannot be concave over $\conv(\XX)$. 
	
	Later in the text, we will derive concave tents for various configurations of $\XX$ and $f$ so that existence is guaranteed in interesting cases. It is nonetheless fruitful to examine on a more fundamental level what the necessary and sufficient conditions for its existence are. Our example in the previous paragraph illustrates the critical necessary condition for the existence of a concave tent discussed in the result below:
	\begin{prop}\label{thm:ExistenceofCT}
		Let $f$ and $\XX$ be as in \cref{def:ConcaveTent}, and consider the following condition
		\begin{align}\label{cond:Existence}
			\XX\ni\x = \sum_{i=1}^{k}\gl_i\x_i,\ \ggl\in\Delta_k, \ \x_i\in\XX, \ i \in\irg{1}{k}
			\implies f(\x)\geq \sum_{i=1}^{k}\gl_i f(\x_i). 
		\end{align}
		Then, \eqref{cond:Existence} is necessary for the existence of a concave tent of $f$ over $\XX$. If $\XX$ is compact and $f$ is continuous over $\XX$, then \eqref{cond:Existence} is also sufficient.
	\end{prop} 
	\begin{proof}
		To prove necessity, assume there was such an $\x\in\XX$ that is a convex combination of $\x_i\in\XX,\ i \in \irg{1}{k}$ with weights $\ggl\in\Delta_k$, but $f(\x)< \sum_{i=1}^{k}\gl_i f(\x_i)$. Then, a function that agrees with $f$ on $\XX$ cannot be concave, so a concave tent cannot exist. 
		To argue sufficiency, we consider the graph of $f$ over $\XX$, i.e. 
		$
		\FF \coloneqq \lrbr{
			[
			f(\x), \ \x\T
			]\T\in\R^{n+1} \colon \x\in\XX 
		}.
		$
		By compactness of $\XX$ and continuity of $f$ relative to $\XX$, we have that $\FF$ is compact so that its convex hull is compact as well (see \cite[Corollary 5.33, p.185]{aliprantis_infinite_2006}). Define 
		\begin{align*}
			\hspace{-0.5cm}
			g(\x) \coloneqq \sup_{\mu}\lrbr{\mu \colon 
				\begin{bmatrix}
					\mu \\ \x
				\end{bmatrix}
				\in \conv(\FF)
			} = \sup_{\gl,\x_i}\lrbr{\sum_{i=1}^{k}\gl_if(\x_i)\colon \sum_{i=1}^{k}\x_i = \x,\ \ggl\in\Delta_k,\ \x_i\in\XX, \ i \in \irg{1}{k}},
		\end{align*}
		where we can set $k=n+2$ by Carathéodory's theorem. Then $g$ is a concave function by \cite[Theorem 5.3.]{rockafellar_convex_2015} and the supremum is attained by the compactness of $\XX$ and continuity of $f$. We will show that it agrees with $f$ on $\XX$. Assume to the contrary that there is an $\x\in\XX$ so that $g(\x)\neq f(\x)$.  
		We always have $g(\x)\geq f(\x)$, as $\ggl=\e_1$ and $\x_1=\x$ is feasible for the supremum so that this assumption implies $g(\x)>f(\x)$. But then $f(\x)< \sum_{i=1}^{k}\gl_i f(\x_i),$ while $\XX\ni\x =  \sum_{i=1}^{k}\gl_i  \x_i, \  \ggl\in\Delta_k, \ \x_i\in\XX, \ i \in\irg{1}{k}$ by attainment of the supremum,	in contradiction to \eqref{cond:Existence}. Thus, $g$ is a concave function that agrees with $f$ on $\XX$ and hence a concave tent. 
	\end{proof}
	The requirements on $f$ and $\XX$ for the necessary condition to be sufficient are quite drastic, and we hypothesize that much weaker assumptions could suffice. Indeed, \cite[Theorem 2.]{tawarmalani_convex_2002} discusses a much broader necessary and sufficient condition for the existence of a concave extensions of $f$ over a convex-set $\CC$ (not necessarily equal to $\conv(\XX)$). However, for the purposes of this text the above conditions suffice. Also, in the context of optimization such regularizations are quite common and we do not wish to delve too deep into convex analysis at this point. Now, we merely like to mention that the necessary condition is trivially fulfilled if there is no point in $\XX$ that is a convex combination of other points in $\XX$. This is the case if $\XX$ is contained in the set of extreme points in $\conv(\XX)$, which is the setup we will be concerned with exclusively. However, in \cref{sec:Super-derivatives based on conic duality} we will devise a strategy to work around the limitations of this special case so that our discussion is not limited to situations where condition \eqref{cond:Existence} is satisfied. Further, no such limitation is required for the following result, which summarizes the core motivation behind studying concave tents in the first place. 
	
	\begin{thm}\label{thm:ConcaveReformulations}
		Let $f$ and $\XX$ be as in \cref{def:ConcaveTent}. If a concave tent of $f$ over $\XX$, say $g$, exists, then for the optimization problems 
		$\inf_{\x\in\XX}\lrbr{ f(\x) }\mbox{ and } \inf_{\x\in\conv(\XX)} \lrbr{g(\x)}$ the following hold:
		\begin{enumerate}
			\item[a)] Every strict local minimizer of the latter problem is a strict local minimizer of the former.
			\item[b)] We have $\inf_{\x\in\XX}\lrbr{f(\x)} = \inf_{\x\in\conv(\XX)} \lrbr{g(\x)}$, and they have the same global minimizers.
		\end{enumerate}
	\end{thm}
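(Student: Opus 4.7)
The plan is to establish (b) first, then use its ingredients for (a). For (b), I would combine Carathéodory's theorem with concavity of $g$ on $\conv(\XX)$: any $\x\in\conv(\XX)$ admits a representation $\x=\sum_{i=1}^{k}\gl_i\x_i$ with $\x_i\in\XX$ and $\ggl\in\Delta_k$, and concavity together with the tent property yields
\[
g(\x)\ \geq\ \sum_{i=1}^{k}\gl_i\, g(\x_i)\ =\ \sum_{i=1}^{k}\gl_i\, f(\x_i)\ \geq\ \inf_{\y\in\XX}f(\y).
\]
Hence $\inf_{\conv(\XX)}g\geq\inf_\XX f$, while the reverse inequality $\inf_{\conv(\XX)}g\leq\inf_\XX g=\inf_\XX f$ is immediate from $\XX\subseteq\conv(\XX)$. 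The same chain yields the global-minimizer correspondence: if $\x^*\in\XX$ minimizes $f$, then $g(\x^*)=f(\x^*)$ equals the common infimum, so $\x^*$ minimizes $g$ over $\conv(\XX)$; conversely, if $\y^*\in\conv(\XX)$ minimizes $g$, the chain above forces $f(\x_i)=\inf_\XX f$ for every $\x_i$ in a Carathéodory decomposition of $\y^*$, so the minimizers of $g$ are exactly convex combinations of minimizers of $f$, and in particular $\XX\cap\argmin_{\conv(\XX)}g=\argmin_{\XX}f$.

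For (a), let $\y^*$ be a strict local minimizer of $g$ on $\conv(\XX)$. The crucial step is to show $\y^*\in\XX$; once that is established, the tent property gives $f(\x)=g(\x)\geq g(\y^*)=f(\y^*)$ for every $\x\in\XX$ within the local neighborhood witnessing strictness, so $\y^*$ is a local minimizer of $f$ over $\XX$. To argue $\y^*\in\XX$, I would proceed by contradiction: a Carathéodory decomposition $\y^*=\sum_{i=1}^{k}\gl_i\x_i$ with $\gl_i>0$ and $\x_i\in\XX$ must involve two distinct points, say $\x_1\neq\x_2$, since otherwise $\y^*$ would coincide with a point in $\XX$. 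Then the perturbation
\[
\y(t)\ \coloneqq\ \y^*+t(\x_1-\x_2)\ =\ (\gl_1+t)\x_1+(\gl_2-t)\x_2+\sum_{i\geq 3}\gl_i\x_i
\]
remains in $\conv(\XX)$ for $|t|$ sufficiently small. Concavity of $g$ along this segment yields $g(\y^*)\geq\tfrac12\bigl(g(\y(\delta))+g(\y(-\delta))\bigr)$, while strictness of the local minimum forces both terms on the right to exceed $g(\y^*)$ for small enough $\delta>0$, a contradiction.

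I expect the main obstacle to be exactly this step in (a): ruling out that a strict local minimizer of the concave tent could live in $\conv(\XX)\setminus\XX$. The argument above reduces this to the observation that any such point lies in the relative interior of a line segment within $\conv(\XX)$, and concavity precludes a strict local minimum at an interior point of such a segment. Beyond this geometric reduction, no regularity is needed other than concavity of $g$ on $\conv(\XX)$ and compactness of $\XX$ (which ensures Carathéodory applies and $\conv(\XX)$ is the image we decompose over).
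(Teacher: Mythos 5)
Your proof is correct and follows essentially the same route as the paper: part b) via a Carath\'eodory decomposition combined with Jensen's inequality and the tent property, and part a) by showing that concavity forbids a strict local minimum at any point of $\conv(\XX)$ that is a nontrivial convex combination of points of $\XX$. The only cosmetic difference is that you rule out $\y^*\in\conv(\XX)\setminus\XX$ directly by perturbing along a segment between two points of the decomposition, whereas the paper splits into the cases ``extreme point of $\conv(\XX)$'' (which lies in $\XX$ by compactness) versus ``non-extreme point'' and contracts the decomposition points into the ball $\BB_\eps(\x)$; both arguments are sound and of the same substance.
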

	\begin{proof}
		We first prove a), so let $\x\in\conv(\XX)$ be a local minimizer of the concave optimization problem, i.e. $g(\x)<g(\y),\ \forall \y\in\left(\conv(\XX)\cap \BB_{\eps}(\x)\right)\setminus\lrbr{\x}$ for some $\eps>0$. If $\x$ is an extreme point of $\conv(\XX)$, then $\x\in\XX$ and we get $f(\x)<f(\y),\ \forall \y\in\left(\XX\cap \BB_{\eps}(\x)\right)\setminus\lrbr{\x}$ as desired, since $f(\y) = \g(\y), \ \forall \y\in\XX$ and $\XX\cap \BB_{\eps}(\x)\subseteq\conv(\XX)\cap \BB_{\eps}(\x)$. So assume $\x$ is not an extreme point. Then there are $\x_i\in\conv(\XX)\cap\BB_{\eps}(\x)\setminus\lrbr{\x},\ i \in\irg{1}{k},\ \ggl\in\Delta_k$ such that $\x =  \sum_{i=1}^{k}\gl_i\x_i$ (such points exist in $\conv(\XX)\setminus\lrbr{\x}$ and can be contracted towards $\x$ until they become members of $\BB_{\eps}(\x)$).
		We have $\g(\x)\geq \sum_{i=1}^{k}\gl_ig(\x_i)\geq g(\x_{i^*})$ for some $i^*\in\irg{1}{k}$ since an average cannot be smaller than its smallest constituent, but this is in contradiction to strict local optimality. To prove b), if $\x\in\XX$ then $f(\x) = g(\x)$ and $\x\in\conv(\XX)$ so that "$\geq$" is immediate. Conversely, if $\x\in\conv(\XX)\setminus\XX$ then $\x= \sum_{i=1}^{k}\gl_i\x_i, \ \ggl\in\Delta_k, \ \x_i\in\XX, \ i \in\irg{1}{k}$ and $g(\x) = g(\sum_{i=1}^{k}\gl_i\x_i)\geq\sum_{i=1}^{k}\gl_ig(\x_i)$ and there must be at least one $\x_i\in\XX$ so that $g(\x)\geq g(\x_i) = f(\x_i)$. Thus, the infima are identical and the final statement follows since global minimizers of the concave problem, if they exist, are attained in $\XX$ by the above argument.
	\end{proof}
	
	\begin{rem}
		In case $\XX\subseteq \lrbr{0,1}^n$, the definition of strict local minimizer can be applied, but is not meaningful as every point in $\XX$ would be a strict local minimizer. However, the above theorem is not restricted to such sets and point b) is still meaningful even for discrete sets, so we chose not to exclude, nor treat separately the discrete case.
	\end{rem}

	The core message of this theorem is the following: concave tents allow us to employ methods from concave minimization to tackle $\inf_{\x\in\XX} f(\x)$, where we may exploit knowledge on $\conv(\XX)$. Optimizing the concave tent globally solves the original problem, and strict local solutions yield upper bounds that cannot be improved locally. Thus, well-studied concave optimization tools as well as results on $\conv(\XX)$ can potentially be applied in a new area. 
	
	\section{A versatile class of functions}\label{sec:A versatile class of functions}
	The mere existence of a concave tent does not imply that it can be easily constructed. Indeed, for this purpose, we have to make further assumptions on the feasible set $\XX$ and the function $f$, the latter of which will be discussed in this section. Namely, henceforth we focus on functions that fulfill the following:
	
	\begin{asm}\label{asm:CompactU} 
		The objective function $f$ can be written as  
		\begin{align*}
			f(\x) 
			= h(\x)+
				\x\T\Ab\x+2\a\T\x+\max_{\u \in \R^q}  
				\lrbr{2\u\T\Bb\x+\u\T\Cb\u+2\cc\T\u 
				\colon 
				\u\in\UU
			},
		\end{align*}			
		for a closed proper concave function $h$, a compact convex nonempty set  $\UU\subseteq\R^q$, matrices $\Ab\in\SS^n,\ \Bb\in\R^{q\times n},\ \Cb\in\SS^q$ and vectors $\a\in\R^n,\ \cc\in\R^q$. 			
	\end{asm} 
	
	Let us briefly discuss the versatility of this class of functions. Firstly, note that the supremum term is always a closed convex function of $\x$ since it is the pointwise supremum of functions that are linear in $\x$. Indeed, by \cite[Theorem 12.1.]{rockafellar_convex_2015} any closed, convex function can be written as a pointwise supremum of linear functions so that \cref{asm:CompactU} excludes convex functions that are not closed, which play a minor role in convex optimization anyway. This also might give the appearance that including the term $\u\T\Cb\u$ is spurious, however, some convex functions are naturally given is in this form, e.g., those that appear in robust optimization as discussed shortly. 
	
	For an arbitrary closed convex function $c$, we can often recover a representation that conforms with \cref{asm:CompactU}. From the convex conjugate of a convex function $c$ given by $c^*(\u) = \sup_{\x\in\R^n}\lrbr{\u\T\x - c(\x)}$ we get its biconjugate characterization 
	\begin{align*}
		\hspace{-1cm}
		c(\x)  		
		= 
		\sup_{\u\in\R^n}\lrbr{\x\T\u - c^*(\u)} 
		= 
		\sup_{\substack{\u\in\R^n,\\ u_0\in\R}}
		\lrbr{
			\x\T\u - u_0
			\colon 
			\left[\u\T,u_0\right]
			\in\UU
		}, \ 
		\UU
		\coloneqq
		\lrbr{
			\begin{bmatrix}
				\u \\ u_0
			\end{bmatrix}\in\R^{n+1}
			\colon 
			f^*(\u)\leq u_0
		},
	\end{align*}
	so that 
	$f(\x) \coloneqq h(\x) + \x\T\Ab\x+2\a\T\x + c(\x)$ can be rewritten in the required form. However, the set $\UU$ as defined above is not compact since $u_0$ is not constrained from above and even if it were, the effective domain of $c^*$ may be unbounded. We address this issue in greater detail in \cref{apx:RegularizingConveFunctions}, where we will show how to replace $c$ by surrogate function, the Pasch-Hausdorff envelope, that agrees with $f$ on $\XX$ and conforms with \cref{asm:CompactU}. Consequently, the model in \cref{asm:CompactU} encompasses many instances of difference-of-convex optimization.
	
	The second source of minimization problems where the functions conform with \cref{asm:CompactU} stems from robust optimization especially robust optimization with quadratic indices. In such models, $\u$ would take the role of the uncertainty parameter and $\UU$ the role of the uncertainty set. The special case $f(\x)\coloneqq \sup_{\u \in \UU} \lrbr{(\a+\Bb\T\u)\x+ \cc\T\u}$ is the standard worst-case evaluation in robust linear optimization (see e.g. \cite{ben-tal_robust_2009}).  Examples of robust optimization models where the uncertainty parameter appears quadratically are given by adjustable robust optimization under quadratic decision rules or under affine decision rules but with uncertain recourse and by robust quadratic optimization (see e.g.\ \cite{ben-tal_adjustable_2004} and \cite{bomze_optimization_2023} and references therein).
	
	To the best of our knowledge an exact characterization of functions that fulfill \cref{asm:CompactU} is not known and we defer an exhaustive investigation to future research and close this section with the following useful result:
	\begin{prop}\label{prop:continuity}
		Under \cref{asm:CompactU}, the supremum in the definition of $f$ is locally Lipschitz in $\x$, and $f$ itself is upper semicontinuous and nowhere equal to $\infty$. 
	\end{prop}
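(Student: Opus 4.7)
Write $s(\x) \coloneqq \sup_{\u\in\UU}\varphi(\x,\u)$ with $\varphi(\x,\u)\coloneqq \x\T\Ab\x+2\a\T\x+2\u\T\Bb\x+\u\T\Cb\u+2\cc\T\u$, so that $f=h+s$. The plan is to establish (i) that $s$ is locally Lipschitz, (ii) that $f$ is upper semicontinuous, and (iii) that $f<+\infty$ everywhere, treating the three claims in this order.

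For (i), fix any bounded set $\BB\subseteq\R^n$ and note that $\varphi(\cdot,\u)$ is differentiable in $\x$ with gradient $\nabla_\x\varphi(\x,\u)=2\Ab\x+2\a+2\Bb\T\u$. Since $\UU$ is compact and $\BB$ is bounded, the set $\{\nabla_\x\varphi(\x,\u)\colon \x\in\BB,\u\in\UU\}$ is bounded by some constant $L$ depending only on $\BB$. By the mean value theorem, for every $\u\in\UU$ and every $\x_1,\x_2\in\BB$ we have $|\varphi(\x_1,\u)-\varphi(\x_2,\u)|\le L\|\x_1-\x_2\|$. Taking the supremum over $\u$ on both sides and using the standard inequality $|\sup_\u a(\u)-\sup_\u b(\u)|\le \sup_\u|a(\u)-b(\u)|$ yields $|s(\x_1)-s(\x_2)|\le L\|\x_1-\x_2\|$, which proves local Lipschitz continuity (and in particular, the supremum is finite on all of $\R^n$ because $\UU$ is compact and $\varphi(\x,\cdot)$ is continuous, so it is attained).

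For (ii), note that local Lipschitzness from (i) implies $s$ is continuous on $\R^n$. Since $h$ is closed and concave (hence upper semicontinuous), the sum $f=h+s$ is upper semicontinuous as the sum of an u.s.c.\ function and a continuous one. For (iii), properness of the concave function $h$ gives $h(\x)<+\infty$ for all $\x\in\R^n$ by the convention recalled in the notation section. Combined with the finiteness of $s$ established in (i), this gives $f(\x)=h(\x)+s(\x)<+\infty$ everywhere.

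The only mildly delicate step is (i), and specifically the verification that the Lipschitz constant can be chosen uniformly in $\u$; this is where compactness of $\UU$ is essential. Once this is in hand, (ii) and (iii) are immediate consequences of the definitions of closedness and properness for concave functions.
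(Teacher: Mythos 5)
Your proof is correct, and the overall architecture (split $f=h+s$, show the supremum term $s$ is locally Lipschitz, then get upper semicontinuity and finiteness from the sum with the closed proper concave $h$) matches the paper's. The one genuine difference is in how local Lipschitzness of $s$ is obtained: the paper simply invokes Clarke's result on marginal functions \cite[Theorem (2.1)]{clarke_generalized_1975}, whereas you prove it by hand, exploiting the fact that $\varphi(\cdot,\u)$ is quadratic in $\x$ with gradient $2\Ab\x+2\a+2\Bb\T\u$, which is bounded uniformly over $\BB\times\UU$, and then passing the pointwise Lipschitz bound through the supremum via $|\sup_\u a(\u)-\sup_\u b(\u)|\le\sup_\u|a(\u)-b(\u)|$. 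Your route is more elementary and self-contained, and it works precisely because the dependence on $\x$ here is polynomial with an explicitly computable gradient; Clarke's theorem buys generality (it covers suprema of families that are merely locally Lipschitz in $\x$ with suitable uniformity, without needing differentiability). One cosmetic point: to apply the mean value theorem between $\x_1,\x_2\in\BB$ you need the segment $[\x_1,\x_2]$ to lie where the gradient bound holds, so either take $\BB$ to be a ball (which suffices for \emph{local} Lipschitzness) or bound the gradient over $\conv(\BB)\times\UU$, which is still bounded; this is a trivial repair, not a gap. The remaining two claims are argued exactly as in the paper.
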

	\begin{proof}
		The first statement follows from \cite[ Theorem (2.1)]{clarke_generalized_1975}. Thus, $f$ is the sum of a continuous function, a quadratic function and the closed proper concave function $h$, hence itself upper semicontinuous and smaller than $\infty$ everywhere.  
	\end{proof}
	
	\section{Constructing concave tents via conic optimization}
	In this section, we will derive concave tents as optimal value functions of certain conic optimization problems. We divide our discussion into three parts: 
	to ease the reader into the subject, we will present a construction of concave tents for functions that are simpler than the ones presented in \cref{asm:CompactU} and $\XX\subseteq \lrbr{0,1}^n$. After that, we will derive concave tents where $f$ conforms with \cref{asm:CompactU} and $\XX$ is, more generally, a set that contains only extreme points of its convex hull. Finally, we will present a double lifting strategy that will allow us to bypass the restrictions of the necessary condition in \cref{thm:ExistenceofCT} so that we can construct concave reformulations over general sets $\XX$.

	\subsection{Concave tents of convex functions over subsets of $\lrbr{0,1}^n$}		
	In this subsection, we will discuss concave tents over $\XX\subseteq \lrbr{0,1}^n$ and functions that are of the form $f(\x) = \sup_{\u \in \UU} \u\T\Bb\x+\cc\T\u$ for some compact convex $\UU\subseteq \R^q$, i.e.\ we restrict the discussion to closed convex functions. Note, that due to compactness we can assume w.l.o.g. that $\|\u\|^2\leq 1,\ \forall \u\in\UU$ so that $\Ib\bullet\Ub\leq 1$ whenever $(\u,\Ub)\in\GG(\UU)$. We will make use of the following two lemmas: 
	
	\begin{lem}\label{lem:Concavity}
		Let $g\colon \R^n\rightarrow \R$ be given by 
		$
		g(\x)
		\coloneqq
		\sup_{\y\in\R^k}
		\lrbr{
			\p\T\x+\q\T\y 
			\colon 
			\begin{bmatrix}
				\x\T,&\y\T
			\end{bmatrix}\T
			\in\PP
		},
		$
		for some vectors $\p\in\R^n$ and $\q\in\R^k$ and a convex set $\PP\subseteq \R^{n+k}$, then $g$ is a concave function. 
	\end{lem}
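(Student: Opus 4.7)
The plan is to verify concavity directly from the definition, using the convexity of $\PP$ to combine feasible witnesses for nearby points. Concretely, given $\x_1,\x_2\in\dom(g)$ (i.e.\ points for which the projection $\lrbr{\y\colon (\x_i,\y)\in\PP}$ is nonempty) and $\gl\in[0,1]$, I would, for any $\ge>0$, pick $\y_1,\y_2$ such that $(\x_i,\y_i)\in\PP$ and $\p\T\x_i+\q\T\y_i\geq g(\x_i)-\ge$ (which exist by the definition of the supremum, modulo the usual adjustment if $g(\x_i)=+\infty$, which I would handle as a separate case in which any feasible witness can be scaled arbitrarily large).

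Next I would form the convex combination $(\gl\x_1+(1-\gl)\x_2,\gl\y_1+(1-\gl)\y_2)$. By convexity of $\PP$, this pair lies in $\PP$, so it is feasible for the supremum defining $g(\gl\x_1+(1-\gl)\x_2)$. Linearity of the objective $\p\T\x+\q\T\y$ in $(\x,\y)$ then yields
\begin{align*}
g(\gl\x_1+(1-\gl)\x_2)
&\geq \p\T(\gl\x_1+(1-\gl)\x_2)+\q\T(\gl\y_1+(1-\gl)\y_2)\\
&= \gl(\p\T\x_1+\q\T\y_1)+(1-\gl)(\p\T\x_2+\q\T\y_2)\\
&\geq \gl g(\x_1)+(1-\gl)g(\x_2)-\ge.
\end{align*}
Letting $\ge\to 0$ delivers the concavity inequality. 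For $\x$ outside $\dom(g)$, $g(\x)=-\infty$ by convention and the inequality holds trivially.

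There is no real obstacle here: the result is the standard fact that the partial supremum of a linear function over a convex set is concave, i.e.\ it is the concave marginal function associated with projecting the convex epigraph-like set onto the $\x$-coordinates. The only minor care needed is the treatment of $\pm\infty$ values (empty slice or unbounded supremum), both of which are compatible with concavity. As an alternative one-line proof, one could invoke \cite[Theorem 5.7]{rockafellar_convex_2015}, which states that the marginal function of a concave function over a convex constraint set is concave; applying it to the concave (in fact linear) function $(\x,\y)\mapsto \p\T\x+\q\T\y$ and the convex set $\PP$ gives the claim immediately.
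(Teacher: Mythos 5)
Your proposal is correct and follows essentially the same route as the paper: combine feasible witnesses for $\x_1$ and $\x_2$ convexly using the convexity of $\PP$, use linearity of the objective, and treat the empty-slice case ($g=-\infty$) separately. Your $\ge$-optimal-witness refinement is in fact slightly more careful than the paper's version, which tacitly treats the supremum as attained when writing the final equality.
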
 
	\begin{proof}
		Assume that $\lrbr{\x_1,\x_2}\subseteq \R^n$ such that 
		$
		\exists \y_i\colon 
		\begin{bmatrix}
			\x_i\T,&\y_i\T
		\end{bmatrix}\T
		\in\PP,\ i = 1,2. 
		$
		Then, by convexity $\y(\gl) \coloneqq \gl\y_1+(1-\gl)\y_2$ for $\gl\in[0,1],$ is a feasible solution for $g(\x(\gl))$ where $\x(\gl)\coloneqq \gl\x_1+(1-\gl)\x_2$ and, thus, yields a lower bound so that 
		$
		g(\x(\gl))= g(\gl\x_1+(1-\gl)\x_2) \geq \p\T\x(\gl)+\q\T\y(\gl) = \gl g(\x_1)+(1-\gl)g(\x_2).
		$
		If at least one of $\x_1,\x_2$ is such that no suitable $\y_i,\ i = 1,2$ exists then the inequality holds trivially, since the respective value for $g$ is $-\infty$. As an alternative geometrical argument one can construct the hypograph of $g$ defined by the inequality $t\leq g(\x)$ (where $t$ is the vertical coordinate) as the hypograph $t\leq \p\T\x+\q\T\y$ of the objective function over $\PP$ projected onto the $(\x,t)$-coordinates. The details are left to the reader.      
	\end{proof}

	\begin{lem}\label{lem:ExtremeRank1}
		The following holds
		\begin{enumerate}
			\item[a)] For $\x\in\R^n,\ \lrbr{\Ab,\Bb}\subset \SS_+^n$ we have that $\x\x\T  = \Ab + \Bb$ implies $\Ab = \ga \x\x\T,\ \Bb = \gb \x\x\T$ where $\ga,\ \gb$ are nonnegative and $\ga+\gb = 1$ .
			\item[b)] For $\lrbr{\x,\y} \subset \R^n$ we have that $\x\x\T= \y\y\T$ implies $\y\in\lrbr{\x,-\x}$.  
		\end{enumerate}
	\end{lem}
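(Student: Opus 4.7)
\textbf{Proof plan for \cref{lem:ExtremeRank1}.}

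For part (a), the plan is to exploit the fact that $\x\x\T$ is rank one (or zero), and that PSD matrices are controlled by their quadratic form. First, dispose of the trivial case $\x = \oo$: then $\Ab + \Bb = \Ob$ with both PSD forces $\Ab = \Bb = \Ob$, and the conclusion holds vacuously with any admissible weights. For $\x \neq \oo$, pick any $\v \in \R^n$ with $\v\T \x = 0$. Since $\v\T(\x\x\T)\v = 0$, the decomposition gives $\v\T\Ab\v + \v\T\Bb\v = 0$, and by positive semidefiniteness each term must individually vanish. Invoking the standard fact that $\v\T\Mb\v = 0$ for $\Mb \in \SS_+^n$ implies $\Mb\v = \oo$, I conclude $\Ab\v = \Bb\v = \oo$ for every $\v$ in $\x^\perp$. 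Hence both range spaces lie in $\mathrm{span}(\x)$, so $\Ab = \ga\,\x\x\T$ and $\Bb = \gb\,\x\x\T$ for some scalars. Positive semidefiniteness forces $\ga,\gb \geq 0$, and adding the two equations and comparing to $\x\x\T$ yields $\ga + \gb = 1$.

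For part (b), the plan is essentially the same spectral observation in scalar form. The case $\x = \oo$ is immediate since $\y\y\T = \Ob$ implies $\y = \oo$. For $\x \neq \oo$, apply both sides of $\x\x\T = \y\y\T$ to the vector $\x$: this produces $\|\x\|^2 \x = (\y\T\x)\y$. The left-hand side is nonzero, so $\y\T\x \neq 0$ and $\y = \tfrac{\|\x\|^2}{\y\T\x}\x$, i.e.\ $\y$ is a scalar multiple of $\x$. Writing $\y = c\x$ and substituting back gives $c^2 \x\x\T = \x\x\T$, and since $\x \neq \oo$ this forces $c^2 = 1$, hence $\y \in \lrbr{\x,-\x}$.

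Neither step involves a real obstacle; the only subtlety is remembering to separate out the $\x = \oo$ case before invoking rank-one arguments, and to cite (or briefly justify) the PSD fact that $\v\T\Mb\v = 0$ implies $\Mb\v = \oo$, which follows from a Cauchy--Schwarz-type bound on the induced bilinear form $(\u,\v) \mapsto \u\T\Mb\v$ or, equivalently, from the existence of a PSD square root of $\Mb$.
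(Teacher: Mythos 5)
Your proposal is correct and complete. Note that the paper does not actually prove \cref{lem:ExtremeRank1}; it simply declares both statements to be well-known facts and cites \cite[Proposition 11]{bomze_optimization_2023}, so there is no in-paper argument to compare against. Your self-contained derivation is the standard one and is sound: for a) the key chain is that $\v\T\x = 0$ forces $\v\T\Ab\v + \v\T\Bb\v = 0$ with both summands nonnegative, hence zero, and the square-root (or Cauchy--Schwarz) argument upgrades $\v\T\Mb\v = 0$ to $\Mb\v = \oo$, placing the ranges of $\Ab$ and $\Bb$ inside $\mathrm{span}(\x)$; symmetry then pins down the scalar form $\ga\,\x\x\T$, and comparing traces (or just the sum) gives $\ga + \gb = 1$. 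For b) applying both sides to $\x$ and normalizing is exactly right. Your care with the $\x = \oo$ case is a point in your favor, though strictly speaking in a) the conclusion as stated ($\ga + \gb = 1$) is still satisfiable there, as you observe. The only cosmetic remark: in a) you should make explicit that a symmetric matrix whose range lies in $\mathrm{span}(\x)$ has the form $\ga\,\x\x\T$ (writing $\Ab = \x\w\T$ and using symmetry to force $\w \in \mathrm{span}(\x)$), but this is a one-line gap, not a real one.
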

	\begin{proof}
		These are well-known facts, that are proved in, for example, \cite[Proposition 11.]{bomze_optimization_2023}.
	\end{proof}
	
	We are now ready to state the main theorem of this section.  
	\begin{thm}\label{thm:ConcaveTentOver01}
		Let $f(\x) = \sup_{\u\in \UU} \u\T\Bb\x+ \cc\T\u$, such that $\UU\subseteq \BB_{1}(\oo)\subseteq \R^q$ is convex and compact and let $\XX\subseteq \lrbr{0,1}^n$. Further, let $\CC\subseteq\SS^{n+q+1}$ be a closed convex set such that
		$\SS_+^{n+q+1} \supseteq \CC \supseteq \GG(\UU\times\XX)$.
		Define
		\begin{align*}
			g(\x) \coloneqq \sup_{\Psi,\u,\Ub,\Xb} \lrbr{\Bb\T\bullet \Psi + \cc\T\u \colon
				\begin{pmatrix}
					1  & \u\T & \x\T \\
					\u & \Ub  & \Psi\T \\
					\x & \Psi & \Xb
				\end{pmatrix} \in \CC, \
				\u\in\UU,\ 
				\begin{array}{r}
					\diag(\Xb) = \x, \\
					\Ib\bullet\Ub \leq 1,
				\end{array}
			}.
		\end{align*} 
		Then $g$ is a concave tent of $f$ over $\XX$.
	\end{thm}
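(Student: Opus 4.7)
My plan is to verify the two conditions of \cref{def:ConcaveTent} for $g$ separately. For concavity of $g$ on $\conv(\XX)$, I would apply \cref{lem:Concavity} directly: reading the definition of $g$, the inner variables are $(\u,\Ub,\Psi,\Xb)$, the objective $\Bb\T\bullet\Psi+\cc\T\u$ is linear in them and constant in $\x$, and the feasible set in the joint space $(\x,\u,\Ub,\Psi,\Xb)$ is convex because the big-matrix constraint sits in the convex cone $\CC$ and is linear in all the variables, $\u\in\UU$ is convex, $\diag(\Xb)=\x$ is linear, and $\Ib\bullet\Ub\leq 1$ is linear. Hence $g$ is concave on all of $\R^n$.

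For the direction $g(\x)\geq f(\x)$ when $\x\in\XX$, I would exhibit an explicit feasible solution. Let $\u^*\in\UU$ attain $f(\x)=(\u^*)\T\Bb\x+\cc\T\u^*$ and take the rank-one lift $\Mb=\y\y\T$ with $\y=\begin{pmatrix}1 & (\u^*)\T & \x\T\end{pmatrix}\T$, which belongs to $\GG(\UU\times\XX)\subseteq\CC$ by definition of $\GG$. Reading off the blocks gives $\u=\u^*$, $\Ub=\u^*(\u^*)\T$, $\Psi=\x(\u^*)\T$, and $\Xb=\x\x\T$. The side constraints hold: $\diag(\x\x\T)=\x$ since $\x\in\lrbr{0,1}^n$, and $\Ib\bullet\u^*(\u^*)\T=\|\u^*\|^2\leq 1$ since $\UU\subseteq\BB_1(\oo)$. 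The objective value is $\Bb\T\bullet\x(\u^*)\T+\cc\T\u^*=(\u^*)\T\Bb\x+\cc\T\u^*=f(\x)$.

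The main obstacle is the reverse inequality $g(\x)\leq f(\x)$, since $\CC$ may be a strict relaxation of $\GG(\UU\times\XX)$ and a feasible $\Mb$ need not decompose as a convex combination of admissible rank-one dyads. I would circumvent this by exploiting that $\CC\subseteq\SS_+^{n+q+1}$ together with the constraint $\diag(\Xb)=\x\in\lrbr{0,1}^n$. Taking the Schur complement of $\Mb$ with respect to its positive top-left scalar entry yields
\begin{align*}
\begin{pmatrix} \Ub-\u\u\T & \Psi\T-\u\x\T \\ \Psi-\x\u\T & \Xb-\x\x\T \end{pmatrix}\succeq 0.
\end{align*}
Because $\x\in\lrbr{0,1}^n$, the diagonal of $\Xb-\x\x\T$ is $\x-\x\odot\x=\0$, so the bottom-right PSD block must vanish, giving $\Xb=\x\x\T$. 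The standard fact that the off-diagonal blocks of a PSD matrix with a zero diagonal block also vanish then forces $\Psi=\x\u\T$. The objective therefore collapses to $\Bb\T\bullet\x\u\T+\cc\T\u=\u\T\Bb\x+\cc\T\u$, which for $\u\in\UU$ is bounded above by $f(\x)$; taking the supremum over all feasible $(\u,\Ub,\Psi,\Xb)$ completes the proof.
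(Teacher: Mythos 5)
Your proposal is correct, and for the key direction $g(\x)\leq f(\x)$ it takes a genuinely different route from the paper. The concavity argument (via \cref{lem:Concavity}) and the rank-one witness for $g(\x)\geq f(\x)$ coincide with the paper's. For the reverse inequality, the paper first recovers $\Xb=\x\x\T$ exactly as you do (Schur complement of the $(1,\x,\Xb)$-submatrix plus the zero-diagonal argument), but then establishes $\Psi=\x\u\T$ by writing the full feasible matrix as a sum of dyads $[\gl_i,\u_i\T,\v_i\T]\T[\gl_i,\u_i\T,\v_i\T]$ and invoking \cref{lem:ExtremeRank1} to force each $[\gl_i,\v_i\T]\T=\pm\sqrt{\mu_i}[1,\x\T]\T$, after which the signs cancel in the sum $\Psi=\sum_i\v_i\u_i\T$. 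You instead take the Schur complement of the entire matrix with respect to its $(1,1)$-entry, observe that the resulting PSD block has $\Xb-\x\x\T=\Ob$ in its bottom-right corner, and apply the standard fact that a vanishing diagonal block of a PSD matrix kills the adjacent off-diagonal blocks, yielding $\Psi=\x\u\T$ in one stroke. Your argument is more elementary and sidesteps both the dyadic decomposition and the $\pm$ bookkeeping; the paper's decomposition-based machinery is arguably there because the same pattern is reused almost verbatim in the proof of \cref{thm:ConcaveTentOverExtreme}, but for this theorem your shorter path is perfectly sound (note only that $\u\T\Bb\x+\cc\T\u\leq f(\x)$ uses the explicit constraint $\u\in\UU$, which you correctly invoke).
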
	
	\begin{proof}
		The concavity of $g$ is a direct consequence of \cref{lem:Concavity}. To prove that $g$ and $f$ agree on $\XX$, we will use the fact that both functions are given in terms of optimization problems, which we will show to be equivalent whenever $\x\in\XX$.
		To prove "$\leq$", let $\u\in \UU$ and $\y \coloneqq \left(1,\u\T,\x\T\right)$. Then $\y\y\T$ is a feasible matrix for the problem defining $g$, where $\diag(\Xb)= \diag(\x\x\T) = \x $ since $\x\in\XX$, $\Ib\bullet\Ub = \u\T\u \leq 1$	since $\u\in\UU\subseteq\BB_{1}(\oo)$ and $\y\y\T\in\GG(\UU\times\XX)\subseteq\CC$ be the same inclusions. Finally, 	$\Psi = \x\u\T$  so that 
		for the objective function we get $\Bb\T\bullet\Psi+\cc\T\u= \Bb\T\bullet\x\u\T+\cc\T\u = \u\T\Bb\x+\cc\T\u$ as desired. 
		
		For the other direction let $\left(\Psi,\u,\Ub,\Xb\right)$ be feasible for the conic problem for our chosen $\x\in\XX$. By feasibility, we know that 
		\begin{align*}
			\begin{bmatrix}
				1 & \x\T \\ \x & \Xb
			\end{bmatrix}\in\SS^{n+1}_+ 
			\implies  \Xb-\x\x\T \in\SS_+^n  \implies 
			\Xb = \x\x\T+\Mb, \ \Mb\in\SS_+^n.
		\end{align*}
		Further, since $\x\in\XX\subseteq \lrbr{0,1}^n$ we have $ \diag(\x\x\T) -\x = \oo$. But then 
		\begin{align*}
			\oo = \diag(\Xb)-\x = \diag(\x\x\T+\Mb)-\x = \diag(\x\x\T)-\x+\diag(\Mb) = \oo+\diag(\Mb),
		\end{align*}
		so that $\diag(\Mb) = \oo$, which together with $\Mb\in\SS_+^n$ implies $\Mb = \Ob$ and we get $\Xb = \x\x\T$.
		Further, since $\SS_+^n = \conv\left(\lrbr{\w\w\T\colon \w\in \R^n}\right)$ we know that 
		\begin{align}\label{eqn:SDPDecomposition}
			\begin{bmatrix}
				1  & \u\T & \x\T \\
				\u & \Ub  & \Psi\T \\
				\x & \Psi & \Xb
			\end{bmatrix} = 
			\sum_{i=1}^{k}  
			\begin{bmatrix}
				\gl_i^2  & \gl_i\u_i\T & \gl_i\v_i\T \\
				\gl_i\u_i & \u_i\u_i\T  & \u_i\v_i\T \\
				\gl_i\v_i & \v_i\u_i\T & \v_i\v_i\T
			\end{bmatrix} \mbox{ for some }
			\begin{bmatrix}
				\gl_i \\ \u_i \\ \v_i
			\end{bmatrix} \in\R^{n+q+1},\ i \in\irg{1}{k}.
		\end{align}
		After plugging in $\Xb = \x\x\T$, inspecting the respective submatrix yields 
		\begin{align*}
			\begin{bmatrix}
				1\\ \x
			\end{bmatrix}
			\begin{bmatrix}
				1\\ \x
			\end{bmatrix}\T  = \sum_{i=1}^{k} 
			\begin{bmatrix}
				\gl_i\\ \v_i
			\end{bmatrix}
			\begin{bmatrix}
				\gl_i\\ \v_i
			\end{bmatrix}\T \ \implies 
			\begin{bmatrix}
				\gl_i\\ \v_i
			\end{bmatrix}
			\begin{bmatrix}
				\gl_i\\ \v_i
			\end{bmatrix}\T = \mu_i 
			\begin{bmatrix}
				1\\ \x
			\end{bmatrix}
			\begin{bmatrix}
				1\\ \x
			\end{bmatrix}\T,\ \mmu\in\Delta_k, 
		\end{align*}
		by \cref{lem:ExtremeRank1} a). Then, we can see from \cref{lem:ExtremeRank1} b) that
		\begin{align*}
			\begin{bmatrix}
				\gl_i\\ \v_i
			\end{bmatrix}	 = \pm
			\begin{bmatrix}
				\sqrt{\mu_i}\\ \sqrt{\mu_i}\x
			\end{bmatrix}, \mbox{ so that } \ \u  = \sum_{i=1}^{k}\pm\sqrt{\mu_i}\u_i, \mbox{ by (\ref{eqn:SDPDecomposition}),}
		\end{align*}
		and we finally get $\Psi = \sum_{i=1}^{k}\v_i\u_i\T = \sum_{i=1}^{k}\pm\sqrt{\mu_i}\x\u_i\T = \x\sum_{i=1}^{k}\pm\sqrt{\mu_i}\u_i\T = \x\u\T,$
		and $\u\in\UU$ by feasibility so that it is feasible for the problem defining $f$, with 
		$ \Bb\T\bullet\Psi+\cc\T\u =\Bb\T\bullet\x\u\T+\cc\T\u = \u\T\Bb\x+\cc\T\u$, which completes the proof.			  				
	\end{proof}
	
	We will now give an illustration of the above theorem in a simple example. 
	
	\begin{exmp}\label{exmp:ConcaveTents}
		Consider the following nonconvex optimization problem 
		\begin{align}\label{eqn:ExmpProblem}
			\min_{x\in\XX} \max_{(u_1,u_2)\T\in\UU} u_1+u_2x, \mbox{ with }\XX\coloneqq\lrbr{0,1}, \ \UU\coloneqq 
			\lrbr{
				\begin{bmatrix}
					1\\ -8
				\end{bmatrix},
				\begin{bmatrix}
					-2\\ 2
				\end{bmatrix}
			}.
		\end{align}
		The inner maximization problem constitutes a convex function in $x$, say $f$, that is the pointwise maximum of two affine functions. It evaluates to $f(0) = 1,\ f(1) = 0$ and $f(3/10) = -7/5$, which is its global minimum over $\conv(\XX)=[0,1]$. By linearity with respect to $(u_1,u_2)$ we can replace $\UU$ with its convex hull 
		\begin{align*}
			\conv(\UU) = 
			\lrbr{
				\begin{bmatrix}
					u_1\\ u_2
				\end{bmatrix} = u	
				\begin{bmatrix}
					1\\ -8
				\end{bmatrix}+ (1-u)
				\begin{bmatrix}
					-2\\ 2
				\end{bmatrix}\colon u\in[0,1]
			},
		\end{align*}
		so that we can substitute $(u_1,u_2)$ via an affine expression in $u$ after which (\ref{eqn:ExmpProblem}) can be rewritten as 
		\begin{align*}
			\min_{x\in\lrbr{0,1}} \max_{u\in[0,1]} 3u+2x-10ux-2,
		\end{align*}
		To construct a concave tent of $f$ over $\lrbr{0,1}$ we need an appropriate subset $\CC$ of $\SS^3_+$ that is also a superset of $\GG([0,1]\times\lrbr{0,1})$. We propose 
		\begin{align*}
			\CC \coloneqq 
			\lrbr{
				\begin{bmatrix}
					1 & u & x \\
					u & U & \psi\\
					x &\psi& X
				\end{bmatrix} \in\SS^3_+, \colon 
				(x,X)\in\GG(\lrbr{0,1}),\
				(u,U)\in\GG([0,1]),\
				\psi\geq 0 
			} ,
		\end{align*}
		since we have the simple characterizations (which are discussed for example in \cite{burer_gentle_2015}):
		\begin{align*}
			\GG(\lrbr{0,1}) = 
			\lrbr{
				\begin{bmatrix}
					1 & x \\
					x & X
				\end{bmatrix}\in\SS^2_+ \colon 
				\begin{array}{l}
					X\leq x\leq 1\\
					X = x 
				\end{array}
			}, \ 
			\GG([0,1]) = 
			\lrbr{
				\begin{bmatrix}
					1 & u \\
					u & U
				\end{bmatrix}\in\SS^2_+ \colon U\leq u\leq 1
			},
		\end{align*}
		and $\psi\geq 0$ whenever $\psi = xu$ with $x\in\lrbr{0,1}$ and $u\in[0,1]$ so that, clearly, we have $\SS^3_+\supseteq \CC \supseteq \GG([0,1]\times\lrbr{0,1})$ as required. In addition we have that $(u,U)\in\GG([0,1])$ implies $1\bullet U\leq 1$ as required but the constraint is redundant on $\CC$. We are now ready to construct our concave tent as 
		\begin{align}\label{eqn:ConcaveTentExmp}
			g(x) \coloneqq  \sup_{u,U,\psi}\lrbr{ 3u + 2x - 10\psi - 2 \colon 
				\begin{bmatrix}
					1 & u & x \\
					u & U & \psi\\
					x &\psi& x
				\end{bmatrix}\in\SS^3_+, \ U\leq u\leq 1, \ 0 \leq \psi 
			}.
		\end{align}
		Note, that due to the equality constraint in the description of  $\GG(\lrbr{0,1})$ we were able to eliminate $X$, rendering the respective constraints vacuous. In addition, the constraint $x\leq 1$ is then implied by the psd (positive semidefinite) constraint, so that it can be omitted as well. 
		
		We will show that $g$ is indeed a concave tent as proclaimed in \cref{thm:ConcaveTentOver01}. First, note that for any feasible solution, we can always increase $U$ until it equals $u$ since the matrix will not change definiteness. But then, the constraint $u\leq 1$ is redundant since the psd constraint already implies $u\geq u^2$, as all principal minors must be nonnegative. We claim that the optimal solution is given by $u = 1-x,\  \psi = 0$ with optimal value $1-x$. We only need to check the psd constraint to argue feasibility, which is easily done by observing that all principal minors are indeed nonnegative. To prove optimality we inspect the dual problem given by 
		\begin{align} \label{eqn:SmallDual}
			\inf_{\ga,\gb,\gc,\gd,\ge,\gp}\lrbr{x(2\gc+\gp+2) + \ga -2
				\colon
				\begin{bmatrix}
					\ga & \gb & \gc \\
					\gb & \gd & \ge \\
					\gc & \ge & \gp
				\end{bmatrix}\in\SS^3_+, \ 3+2\gb+\gd = 0, \ 2 \ge \leq 10
			}.
		\end{align}
		By weak conic duality, it is sufficient to find a feasible dual solution that attains the proclaimed optimal value. Indeed, for $\ga=\gd=\ge=\gp = 3, \ \gb=\gc= -3$, we find the constraints fulfilled, where positive definiteness can again be certified by the principal minors, and the optimal value is indeed $1-x$. This is not only a concave tent but it tracks the concave envelope of $f$ over the unit interval so we could not improve the upper bounds if we strengthened $\CC$. It is, however, interesting to see what would happen if the constraints were relaxed instead.
		
		Indeed, if we merely omit the constraints $1\geq u \geq U$, a weaker concave tent would be given by   
		\begin{align}\label{eqn:ConcaveTentExmp2}
			\tilde{g}(x) \coloneqq \sup_{u,U,\psi}\lrbr{ 3u + 2x - 10\psi - 2 \colon 
				\begin{bmatrix}
					1 & u & x \\
					u & U & \psi\\
					x &\psi& x
				\end{bmatrix}\in\SS^3_+ \colon U\leq 1,\ 0 \leq \psi 
			}.
		\end{align}
		To solve the sdp, again consider, that it is always possible to set $U=1$ without changing the optimal value. Also, since $\psi=0$ was optimal for $g$, and the eliminated constraint did not involve $\psi$ we can infer that this choice for $\psi$ is still optimal. Then, we relax the psd condition to merely requiring that the determinant be nonnegative, i.e. $x - u^2 x - x^2 \geq 0$.
		Since $x\geq 0$, the latter constraint reduces to $1-x \geq u^2$,
		so that the optimal choice for the relaxation is given by $u = \sqrt{1-x}$. However, it is easily checked, again by enumeration of all principal minors, that this solution is feasible for (\ref{eqn:ConcaveTentExmp2}) and, hence, optimal. In total, we have shown that $\tilde{g}(x) = 3\sqrt{1-x}+2x-2$, which is indeed concave but strictly greater than $g$ over the unit interval, except for the endpoints, where both functions are equal to $f$.		
	\end{exmp}

	\subsection{Concave tents over sets of extreme points}
	
	In this section, we will generalize and strengthen our results from the previous section. The sufficient conditions in the theorem below and their relation to the conditions in \cref{thm:ExistenceofCT} are discussed in the sequel. 	
	\begin{thm}\label{thm:ConcaveTentOverExtreme}
		Let $f$ be such that \cref{asm:CompactU} holds and let  $\XX\subseteq\R^n$. 
		Further, let $\CC\subseteq\SS^{n+q+1}$ be a closed convex set such that 
		$\SS^{n+q+1}_+ \supseteq \CC \supseteq \GG(\UU\times\XX).$
		Finally let $\AA\colon \SS^n\times\R^n \rightarrow \R^m$ be a linear function and $\b\in\R^m$, such that 
		\begin{align}
			\lrbr{
				\begin{bmatrix}
					1 & \x\T \\ \x & \Xb
				\end{bmatrix}\in\SS^{n+1}_+ \colon \AA(\Xb,\x) = \b
			}&\supseteq \GG(\XX), \label{eqn:Nec1}\\
			\lrbr{
				\Xb\in\SS^{n}_+ \colon \AA(\Xb,\oo) = \oo
			}& =  \lrbr{\Ob}\label{eqn:Nec2}.
		\end{align}
		Define
		\begin{align}\label{eqn:CT1}
			g(\x) 
			\coloneqq 
			h(\x)+
			\sup_{\Psi,\u,\Ub,\Xb} \lrbr{
				\begin{array}{l}
					\Ab\bullet\Xb+2\a\T\x+2\Bb\T\bullet\Psi+\Cb\bullet\Ub+2\cc\T\u 
					\colon
					\\
					\begin{bmatrix}
						1  & \u\T & \x\T \\
						\u & \Ub  & \Psi\T \\
						\x & \Psi & \Xb
					\end{bmatrix} \in \CC, \
					\begin{bmatrix}
						1  & \u\T \\
						\u & \Ub   
					\end{bmatrix}\in\GG(\UU),\ 
					\AA(\Xb,\x) = \b
				\end{array}				
			}.
		\end{align} 
		Then $g$ is a concave tent of $f$ over $\XX$, closed and proper if $\XX\neq\emptyset$. 
		If $\Cb = \Ob$ the statement remains true even if $\GG(\UU)$ is replaced by any compact convex set $\GG_{\UU}\supseteq \GG(\UU)$ for which $(\u,\Ub)\in\GG_{\UU} \implies \u\in\UU$.
	\end{thm}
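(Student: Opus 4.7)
The plan is to mirror the proof of \cref{thm:ConcaveTentOver01}, abstracting its two key mechanisms---the identification $\Xb=\x\x\T$ (previously forced by $\XX\subseteq\{0,1\}^n$ together with $\diag(\Xb)=\x$) and the sign-handling via \cref{lem:ExtremeRank1}---to the more general setup governed by \eqref{eqn:Nec1}--\eqref{eqn:Nec2}. I would first dispatch concavity of $g$ on all of $\R^n$ by applying \cref{lem:Concavity} to the $\sup$ in \eqref{eqn:CT1} (whose objective is linear in $\x$ when all other variables are grouped as the ``$\y$'' of that lemma, and whose feasible set is jointly convex), and then add the closed proper concave $h$ to retain concavity. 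Closedness and properness of $g$ follow since the feasible set of the $\sup$ is compact (bounded via $\GG(\UU)$ compact and the PSD matrix pinned down by $\AA(\Xb,\x)=\b$ together with \eqref{eqn:Nec2}) and $g(\x)=f(\x)$ is finite at any $\x\in\XX\neq\emptyset$ by \cref{prop:continuity}.

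For the easy direction $g(\x)\geq f(\x)$ on $\XX$, I would produce an explicit dyadic feasible solution exactly as in the earlier theorem. Given $\x\in\XX$ and $\u\in\UU$, choose $\Xb=\x\x\T$, $\Ub=\u\u\T$, $\Psi=\x\u\T$; the full matrix equals $\y\y\T$ with $\y=(1,\u\T,\x\T)\T$, hence lies in $\GG(\UU\times\XX)\subseteq\CC$; the $(\u,\Ub)$-block lies in $\GG(\UU)$; and $\AA(\x\x\T,\x)=\b$ holds by \eqref{eqn:Nec1} applied to $(\x,\x\x\T)\in\GG(\XX)$. The objective then becomes $\x\T\Ab\x+2\a\T\x+2\u\T\Bb\x+\u\T\Cb\u+2\cc\T\u$; taking the $\sup$ over $\u\in\UU$ and adding $h(\x)$ yields $f(\x)\leq g(\x)$.

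The reverse inequality is the heart of the proof and the main obstacle, since the $\{0,1\}$-specific diagonal trick is unavailable. Let $(\u,\Ub,\Psi,\Xb)$ be feasible for $g(\x)$ with $\x\in\XX$. From $\CC\subseteq\SS^{n+q+1}_+$ the $(n+1)$-principal submatrix is PSD, so $\Xb=\x\x\T+\Mb$ with $\Mb\in\SS^n_+$. By linearity of $\AA$ and \eqref{eqn:Nec1} at $(\x,\x\x\T)\in\GG(\XX)$,
\begin{align*}
\AA(\Mb,\oo)=\AA(\Xb,\x)-\AA(\x\x\T,\x)=\b-\b=\oo,
\end{align*}
and then \eqref{eqn:Nec2} forces $\Mb=\Ob$, yielding $\Xb=\x\x\T$---this is the abstraction replacing the binary-diagonal argument. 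From here I would decompose the full PSD matrix as $\sum_i\z_i\z_i\T$ with $\z_i=(\gl_i,\u_i\T,\v_i\T)\T$, and apply \cref{lem:ExtremeRank1}(a)--(b) to the $(1,\x\T)\T$-subvectors to conclude $(\gl_i,\v_i\T)\T=\epsilon_i\sqrt{\mu_i}(1,\x\T)\T$ for some $\epsilon_i\in\{\pm 1\}$ and $\mmu\in\Delta_k$, exactly as in \cref{thm:ConcaveTentOver01}; this gives $\Psi=\x\u\T$.

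It remains to bound the objective. Since $\UU$ is compact, \cite[Proposition 4]{noauthor_gabl_nodate} guarantees that $(\u,\Ub)\in\GG(\UU)$ admits a genuine convex decomposition $\sum_j\nu_j(1,\w_j\T)\T(1,\w_j\T)$ with $\nnu\in\Delta_m$ and $\w_j\in\UU$. Plugging $\Ub=\sum_j\nu_j\w_j\w_j\T$ and $\u=\sum_j\nu_j\w_j$ into the objective and rearranging gives
\begin{align*}
\Ab\bullet\Xb+2\a\T\x+2\Bb\T\bullet\Psi+\Cb\bullet\Ub+2\cc\T\u = \sum_j\nu_j\bigl[\x\T\Ab\x+2\a\T\x+2\w_j\T\Bb\x+\w_j\T\Cb\w_j+2\cc\T\w_j\bigr],
\end{align*}
a convex combination of values of the $\sup$-integrand at points $\w_j\in\UU$, hence at most $f(\x)-h(\x)$. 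Adding $h(\x)$ back gives $g(\x)\leq f(\x)$. Finally, in the $\Cb=\Ob$ variant, the objective no longer depends on $\Ub$, so only the conclusion $\u\in\UU$ is needed; this is precisely what is assumed for $\GG_\UU$, and the convex decomposition step can be skipped entirely.
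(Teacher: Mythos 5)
Your proposal is correct and follows the paper's architecture for most steps (concavity via \cref{lem:Concavity}, the dyadic feasible point for $g\geq f$, and the chain $\Xb=\x\x\T+\Mb$, $\AA(\Mb,\oo)=\oo$, $\Mb=\Ob$ via \eqref{eqn:Nec1}--\eqref{eqn:Nec2}, then $\Psi=\x\u\T$ via \cref{lem:ExtremeRank1}), but it handles the final step --- the $\Cb\bullet\Ub$ term when $\Cb\neq\Ob$ --- by a genuinely different route. The paper first shows that, once $\Xb=\x\x\T$ and $\Psi=\x\u\T$ are fixed, the constraint involving $\CC$ is implied by $(\u,\Ub)\in\GG(\UU)$ (by writing the big matrix as a convex combination of elements of $\GG(\UU\times\XX)$), reduces the supremum to a linear program over the compact convex set $\GG(\UU)$, and invokes attainment at an extreme point, where $\Ub=\u\u\T$. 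You instead take an \emph{arbitrary} feasible $(\u,\Ub)$, decompose it as $\sum_j\nu_j(\w_j,\w_j\w_j\T)$ with $\w_j\in\UU$ (legitimate by compactness of $\UU$ and \cite[Proposition 4]{noauthor_gabl_nodate}), and verify directly that the objective equals $\sum_j\nu_j$ times the $\sup$-integrand evaluated at $\w_j$, hence is at most $f(\x)-h(\x)$. This is arguably cleaner: it bounds every feasible point at once, needs no extreme-point attainment argument, and never has to show that the $\CC$-constraint becomes redundant. What the paper's route buys in exchange is the intermediate identity \eqref{eqn:Intermediate}, which it reuses elsewhere. One minor thinness in your write-up: you assert closedness of $g$ from compactness of the feasible set, but compactness alone is only the hypothesis; the paper's mechanism is that compactness gives exact conic duality, which exhibits the supremum as a pointwise infimum of affine functions of $\x$ and hence a closed concave function --- you should spell that out.
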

	\begin{proof}
		Again, \cref{lem:Concavity} and the fact that the sum of concave functions is concave show that $g$ is a concave function. In addition, if a function $\hat{g}$ is a closed proper concave tent of $f$ over $\XX$ then $h+\hat{g}$ is a closed proper concave tent of $f+h$ over $\XX$ since all of these attributes are preserved under addition, thus it suffices to consider the case where $h$ is the zero function.  
		
		We will argue that $g$ is a closed and proper concave function if $\XX\neq \emptyset$. Since $\XX\neq\emptyset\neq\UU$ by assumption there is an $\y\coloneqq [1,\x\T,\u\T]\T\colon \x\in\XX,\ \u\in\UU$ such that the matrix $\y\y\T$ gives a feasible matrix block for the supremum so that it is larger than $-\infty$ in at least one point, the first requirement for properness. The boundedness of the feasible set will supply the boundedness of the supremum from above and therefore the second requirement for properness. 
		Note that the feasible set is closed, since  $\CC$ and $\GG(\UU)$ (or $\GG_{\UU}$) are closed and all constraints are linear so that the feasible set is an intersection of closed sets, hence closed. 
		
		Closed convex sets are bounded if and only if they do not contain a ray (see \cite[Theorems 8.3 and 8.4]{rockafellar_convex_2015}). So assume that $(\Psi,\u,\Ub,\Xb)\coloneqq (\Psi_0,\u_0,\Ub_0,\Xb_0)+\gl(\bar\Psi,\bar\u,\bar\Ub,\bar\Xb)
		$ is feasible for $\g(\x)$ for some $\x$ and for all $\gl\geq 0$. Then $\bar\u=\oo$ and $\bar\Ub=\Ob$, since otherwise $\GG(\UU)$ (or $\GG_{\UU})$ would be unbounded contrary to our assumptions.  
		It follows from $\AA(\Xb,\x) = \AA(\Xb_0,\x)+\gl\AA(\bar{\Xb},\oo) = \b$ and $\AA(\Xb_0,\x) = \b$ (from the case $\gl = 0$) that $\AA(\bar{\Xb},\oo) = \oo$ so that $\bar{\Xb} = \Ob$ by (\ref{eqn:Nec2}). Finally $(\Psi)_{ij}^2\leq(\Xb)_{ii}(\Ub)_{jj},\ \forall (i,j)\in\irg{1}{n}\times \irg{1}{q}$ by positive semidefiniteness which implies $\bar\Psi= \Ob$ establishing boundedness and, hence, properness. 
		
		Closedness of $g$ now follows because the dual of the supremum in $g$ is exact whenever the primal has a compact feasible set \cite[Proposition 2.8]{shapiro_duality_2001}. This dual is a pointwise infimum of linear functions in $\x$, since $\x$ only appears as a right-hand side in the constraints and in a constant in the objective (see \cref{sec:Super-derivatives based on conic duality} for an explicit presentation of the dual). Hence, the supremum, and therefore $g$, is a closed concave function by \cite[Theorem 12.1.]{rockafellar_convex_2015}.
		
		To prove that $g$ and $f$ agree on $\XX$ we employ the same strategy as in \cref{thm:ConcaveTentOver01}, where the "$f\geq g$"-direction will be more challenging this time around. We omit the other direction since it is an almost verbatim repetition of the argument laid out in the proof of \cref{thm:ConcaveTentOver01}. So let $\left(\Psi,\u,\Ub,\Xb\right)$ be feasible for the conic problem in $g(\x)$ for some $\x\in\XX$. By feasibility, we know that 
		\begin{align*}
			\begin{bmatrix}
				1 & \x\T \\ \x & \Xb
			\end{bmatrix}\in\SS^{n+1}_+ 
			\implies  \Xb-\x\x\T \in\SS_+^n  \implies 
			\Xb = \x\x\T+\Mb, \mbox{ for some } \Mb\in\SS_+^n.
		\end{align*}
		Further, since $\x\in\XX$ we have $\AA(\x\x\T,\x) = \b$ by (\ref{eqn:Nec1}). But then 
		\begin{align*}
			\b = \AA(\Xb,\x) = \AA(\x\x\T+\Mb,\x+\oo) = \AA(\x\x\T,\x)+\AA(\Mb,\oo) = \b+\AA(\Mb,\oo),
		\end{align*}
		so that $\Ab(\Mb,\oo) = \oo$, which by (\ref{eqn:Nec2}) implies $\Mb = \Ob$ so that $\Xb = \x\x\T$. Repeating the argument in the proof of \cref{thm:ConcaveTentOver01} yields $\Psi = \x\u\T$. In case $\Cb= \Ob$ we are done at this point since $(\u,\Ub)\in\GG_{\UU} \implies \u\in\UU$ so that $\u$ is a feasible solution of the supremum defining $f(\x)$ with the same objective function value as $g(\x)$. 
		Otherwise, consider that 
		\begin{align}\label{eqn:Intermediate}
			g(\x) 
			&= 
			\Ab\bullet\x\x\T + 2\a\T\x +
			\sup_{\u\in\R^q,\, \Ub\in\SS^q} \lrbr{
				\begin{array}{l}
					\Cb\bullet\Ub + 2(\cc+\Bb\x)\T\u 
					\colon
					\\
					\begin{bmatrix}
						1  & \u\T & \x\T \\
						\u & \Ub  & \u\x\T \\
						\x & \x\u\T & \x\x\T
					\end{bmatrix} \in \CC, \
					\begin{bmatrix}
						1  & \u\T \\
						\u & \Ub   
					\end{bmatrix}\in\GG(\UU)  
				\end{array}				
			}\nonumber
			\\ 
			&\leq
			\Ab\bullet\x\x\T + 2\a\T\x +
			\sup_{\u\in\R^q,\, \Ub\in\SS^q} \lrbr{
				\begin{array}{l}
					\Cb\bullet\Ub + 2(\cc+\Bb\x)\T\u 
					\colon						
					\begin{bmatrix}
						1  & \u\T \\
						\u & \Ub   
					\end{bmatrix}\in\GG(\UU)  
				\end{array}				
			}.
		\end{align} 
		Indeed, the equation holds  since we can pull constants out of the optimization problem and erase the now vacuous constraint $\AA(\x\x\T,\x)=\b$. The inequality holds, because we removed a constraint.
		But by \eqref{thm:chlifting} the expression in \eqref{eqn:Intermediate} is identical to $f$, which proves $f\geq g$.  
	\end{proof}	
	
	\subsubsection{Discussion of the sufficient conditions}

	In this section we will show that conditions (\ref{eqn:Nec1}) and (\ref{eqn:Nec2}) are not vacuous and investigate how they relate to the conditions in \cref{thm:ExistenceofCT}. Note, that any linear function $\AA \colon \SS^n\times \R^n\rightarrow \R^m$ has the following representation 
	\begin{align}\label{eqn:AARepresentation}
		\AA(\Xb,\x) = 
		\begin{bmatrix}
			\Ab_1\bullet\Xb + \a_1\T\x\\
			\vdots \\
			\Ab_m\bullet\Xb + \a_m\T\x
		\end{bmatrix}, \mbox{ for some } \Ab_i\in\SS^n, \ \a_i\in\R^n , \ i \in\irg{1}{m}.
	\end{align}
	We can prove the following equivalence by standard arguments from convex analysis.
	\begin{lem}\label{lem:PSDsum}
		Let $\AA\colon\SS^n\times\R^n \rightarrow \R^m$ be of the form (\ref{eqn:AARepresentation}). Condition (\ref{eqn:Nec2}) holds if and only if $\sum_{i=1}^{m}~\gl_i~\Ab_i\in~\SS^n_{++}$ for some $\ggl\in\R^m$.  
	\end{lem}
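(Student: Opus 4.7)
The plan is to reformulate condition (\ref{eqn:Nec2}) using the representation (\ref{eqn:AARepresentation}). Since $\AA(\Xb,\oo)=\oo$ means exactly that $\Ab_i\bullet\Xb=0$ for all $i\in\irg{1}{m}$, condition (\ref{eqn:Nec2}) is equivalent to $\SS^n_+ \cap \LL^\perp = \lrbr{\Ob}$, where $\LL\coloneqq\mathrm{span}\lrbr{\Ab_1,\dots,\Ab_m}$ and orthogonality is taken with respect to the Frobenius inner product. The claim thus reduces to the standard conic-duality equivalence $\SS^n_+\cap\LL^\perp=\lrbr{\Ob}$ iff $\LL\cap\SS^n_{++}\neq\emptyset$.

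For the easy direction ``$\Leftarrow$'', suppose $\Mb\coloneqq\sum_{i=1}^m\gl_i\Ab_i\in\SS^n_{++}$. If $\Xb\in\SS^n_+$ satisfies $\Ab_i\bullet\Xb=0$ for all $i$, then $\Mb\bullet\Xb=\sum_i\gl_i(\Ab_i\bullet\Xb)=0$. Writing $\Xb=\Vb\Vb\T$ and using $\Mb\bullet\Xb=\tr{\Vb\T\Mb\Vb}$ together with $\Mb\succ\Ob$ forces $\Vb=\Ob$ and hence $\Xb=\Ob$, so (\ref{eqn:Nec2}) holds.

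For the nontrivial direction ``$\Rightarrow$'', I will argue by contradiction via a separation argument. Suppose $\LL\cap\SS^n_{++}=\emptyset$. Since $\SS^n_{++}$ is nonempty, open, and convex and $\LL$ is a nonempty convex subspace disjoint from it, the Hahn--Banach separation theorem yields a nonzero $\Yb\in\SS^n$ and a scalar $\gc$ with $\Yb\bullet\Zb\leq \gc \leq \Yb\bullet\Wb$ for every $\Zb\in\LL$ and every $\Wb\in\SS^n_{++}$. Because $\LL$ is a linear subspace and $t\Zb\in\LL$ for all $t\in\R$, the inequality $\Yb\bullet(t\Zb)\leq \gc$ forces $\Yb\bullet\Zb=0$ for every $\Zb\in\LL$; in particular $\Yb\in\LL^\perp$ and we may take $\gc=0$. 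Taking limits $\Wb+\ge\Ib\to\Wb$ as $\ge\downarrow 0$ extends the inequality $\Yb\bullet\Wb\geq 0$ from $\SS^n_{++}$ to all of $\SS^n_+$, so $\Yb\in(\SS^n_+)^*=\SS^n_+$. This gives a nonzero $\Yb\in\SS^n_+\cap\LL^\perp$, contradicting (\ref{eqn:Nec2}).

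The main obstacle is setting up the separation so that the separator ends up \emph{both} in $\LL^\perp$ \emph{and} in $\SS^n_+\setminus\lrbr{\Ob}$; this is precisely where the subspace structure of $\LL$ and the self-duality of $\SS^n_+$ are used in tandem, and where care is needed in passing from strict to weak inequalities via the openness of $\SS^n_{++}$.
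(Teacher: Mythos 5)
Your proposal is correct and follows essentially the same route as the paper: the easy direction is the same positive-definiteness argument, and the hard direction is the same separation of the subspace $\mathrm{span}\lrbr{\Ab_1,\dots,\Ab_m}$ from $\SS^n_{++}$ combined with self-duality of $\SS^n_+$ (the paper packages the separation step as a citation to Rockafellar's Theorem 11.2 rather than invoking Hahn--Banach directly).
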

	\begin{proof}
		If $\ggl\in\R^m$ exists as required, then we have that  $\Ab(\Xb,\oo) = \oo$ implies $$0 = \ggl\T\oo = \ggl\T\AA(\Xb,\oo) = \sum_{i=1}^m\gl_i\Ab_i\bullet \Xb =\left(\sum_{i=1}^m\gl_i\Ab_i\right)\bullet \Xb,$$ which implies that $\Xb =\Ob$ whenever it is positive semidefinite since $\SS_+^n$ is pointed. For the converse, assume to the contrary that $\lrbr{\sum_{i=1}^{m}\gl_i\Ab_i\colon \ggl\in\R^m}\cap\SS^n_{++} = \emptyset$ then by \cite[Theorem 11.2]{rockafellar_convex_2015} there exists $\Xb\in\SS^n$ such that $\Xb\bullet\Yb > 0,\ \forall \Yb\in\SS^n_{++},$ and $\Xb\bullet\sum_{i=1}^{m}\gl_i\Ab_i = 0, \  \forall \ggl\in \R^m$. From the first condition we get that $\Xb\in\SS^n_{+}\setminus\lrbr{\Ob}$ by self-duality of $\SS_+^n$. The second one holds in particular for $\hat{\gl}_i = -\sign(\Ab_i\bullet\Xb),\ i\in\irg{1}{m}$ so that $ \sum_{i=1}^{m}\hat{\gl}_i\Ab_i\bullet\Xb = 0$ implies $\Ab_i\bullet \Xb  = 0$. But then there is a nonzero $\Xb\in\SS_+^n$ so that $\AA(\Xb,\oo) = \oo$ in contradiction to (\ref{eqn:Nec2}).  
	\end{proof}
	
	\begin{thm}
		The following instances of $\XX,\ \AA$, and $\b$ fulfill conditions \eqref{eqn:Nec1} and \eqref{eqn:Nec2}.
		\begin{align*}
			\XX
			&\coloneqq 
			\lrbr{\x\in\lrbr{0,1}^n\colon \Gb\x = \f }, \ \AA(\Xb,\x) \coloneqq 
			\begin{bmatrix}
				\diag(\Xb)-\x\\
				\Gb\x \\
				\diag(\Gb\Xb\Gb\T)
			\end{bmatrix},\ 
			\b \coloneqq 
			\begin{bmatrix}
				\oo \\ \f \\ \f\circ\f
			\end{bmatrix},\\
			\XX
			&\coloneqq 
			\lrbr{\x\in\R^n\colon\x\T\Ab\x + \a\T\x  = \ga},\
			\AA(\Xb,\x) \coloneqq  
			\begin{bmatrix}
				\Ab\bullet\Xb + \a\T\x
			\end{bmatrix},\ 
			\b \coloneqq 
			\begin{bmatrix}
				\ga 
			\end{bmatrix}, \mbox{ for } \Ab\in\SS_{++}^n.
		\end{align*}
	\end{thm}
	\begin{proof}
		Condition (\ref{eqn:Nec1}) is easily validated by checking $\AA(\x\x\T,\x) =\b$ for all $\x\in\XX$. Finally, condition (\ref{eqn:Nec2}) is guaranteed by \cref{lem:PSDsum} since $\Ab\in\SS^n_{++}$ and $\diag(\Xb)_i=\e_i\e_i\T\bullet\Xb$ and  $\sum_{i=1}^{n}\e_i\e_i\T = \Ib \in\SS^{n}_{++}$.
	\end{proof}
	
	\begin{rem}
		In \cref{sec:A primal heuristic} we will describe a third instance not discussed in the above proposition, which is, hence, not exhaustive but should convince the reader that interesting cases are covered, i.e., binary optimization and optimization over a spheroid.  
	\end{rem}
	Note, that in \cref{thm:ConcaveTentOverExtreme} we did not explicitly assume that $\XX$ is a subset of the extreme points of $\conv(\XX)$ or that it was bounded. However, (\ref{eqn:Nec1}) and (\ref{eqn:Nec2}) imply just that as demonstrated below, which establishes the connection between the conditions in \cref{thm:ExistenceofCT} and \cref{thm:ConcaveTentOverExtreme}.
	
	\begin{prop}\label{prop:BoundedXX} 
		If condition \eqref{eqn:Nec2} holds, then $\XX$ is bounded and if in addition \eqref{eqn:Nec1} holds, then $\XX$ is a subset of extreme points of $\conv(\XX)$. 
	\end{prop}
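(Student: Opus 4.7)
The plan is to exploit the rank-one lift $\x\mapsto (\x\x\T,\x)$: condition \eqref{eqn:Nec1} embeds $\XX$ inside the matrix-lifted feasible region, while condition \eqref{eqn:Nec2} kills off any residual positive semidefinite matrix with $\AA(\cdot,\oo)=\oo$. Both claims then follow by standard convex-analytic arguments at the lifted level.

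For boundedness, I work with the closed convex set
\begin{align*}
\mathcal{K}\coloneqq \left\{(\Xb,\x)\in\SS^n\times\R^n \colon \begin{bmatrix}1 & \x\T \\ \x & \Xb\end{bmatrix}\in\SS^{n+1}_+,\ \AA(\Xb,\x)=\b\right\},
\end{align*}
which by \eqref{eqn:Nec1} contains $\lrbr{(\x\x\T,\x)\colon \x\in\XX}$. The recession cone of $\mathcal{K}$ can be computed constraint by constraint: the semidefinite block forces any recession direction $(\Xb',\x')$ to satisfy $\begin{bmatrix}0 & \x'\T \\ \x' & \Xb'\end{bmatrix}\succeq \Ob$, which implies $\x'=\oo$ (a positive semidefinite matrix with a zero diagonal entry has a zero row) and $\Xb'\in\SS^n_+$, while the affine equation contributes $\AA(\Xb',\oo)=\oo$. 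Condition \eqref{eqn:Nec2} then pins $\Xb'=\Ob$, so the recession cone of $\mathcal{K}$ is trivial. Since $\mathcal{K}$ is closed and nonempty, \cite[Theorem 8.4]{rockafellar_convex_2015} yields boundedness of $\mathcal{K}$, hence of $\XX$.

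For extremality, I take $\x\in\XX$ with a convex decomposition $\x=\sum_{i=1}^k\gl_i\x_i$, $\x_i\in\XX$, $\ggl\in\Delta_k$, and set $\Xb\coloneqq\sum_i\gl_i\x_i\x_i\T$. The variance identity $\Xb-\x\x\T=\sum_i\gl_i(\x_i-\x)(\x_i-\x)\T$ shows $\Xb-\x\x\T\in\SS^n_+$, and linearity of $\AA$ combined with \eqref{eqn:Nec1} applied to each $\x_i$ and to $\x$ gives $\AA(\Xb-\x\x\T,\oo)=\oo$. Condition \eqref{eqn:Nec2} then forces $\sum_i\gl_i\x_i\x_i\T=\x\x\T$. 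From this rank-one identity, \cref{lem:ExtremeRank1} supplies $\x_i=c_i\x$ for scalars $c_i$; matching the original and squared identities yields $\sum_i\gl_i c_i=\sum_i\gl_i c_i^2=1$ whenever $\x\neq\oo$ (the opposite case being immediate), so that $c_i$ has zero variance and mean one under the weights $\gl_i$, forcing $c_i=1$ and $\x_i=\x$. To upgrade from ``no nontrivial decomposition within $\XX$'' to extremality in $\conv(\XX)$, I expand any candidate $\x=\mu\y_1+(1-\mu)\y_2$ with $\y_j\in\conv(\XX)$ via Carath\'eodory as a convex combination of points in $\XX$ and apply the previous step.

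The main technical hurdle is the rank-one book-keeping in the extremality argument, for which \cref{lem:ExtremeRank1} is tailor-made; the recession-cone step is routine once one recognises that \eqref{eqn:Nec2} is precisely the condition needed to rule out unbounded directions in the lifted space.
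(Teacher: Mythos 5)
Your proof is correct, and the two halves compare differently with the paper's argument. For boundedness, you take a genuinely different route: you compute the recession cone of the lifted feasible set directly and observe that \eqref{eqn:Nec2} is precisely the statement that this cone is trivial, then invoke \cite[Theorem 8.4]{rockafellar_convex_2015}. The paper instead first passes through \cref{lem:PSDsum} (a theorem-of-the-alternative producing $\ggl$ with $\sum_i\gl_i\Ab_i\in\SS^n_{++}$), builds from it a positive definite matrix $\bar{\Ab}$ via a Schur-complement calculation, and traps $\GG(\XX)$ inside the compact slice $\lrbr{\Yb\in\SS_+^{n+1}\colon\bar{\Ab}\bullet\Yb=\ga}$. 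These are dual views of the same phenomenon; your version is shorter and avoids the auxiliary lemma, while the paper's version makes the "strictly feasible dual certificate" explicit, which is reused elsewhere (e.g.\ in the discussion of the classical tent $g_c$). Note that, like the paper, you do in fact need \eqref{eqn:Nec1} for the boundedness half (to embed $\XX$ into $\mathcal{K}$), even though the proposition's wording suggests \eqref{eqn:Nec2} suffices alone — \eqref{eqn:Nec2} says nothing about $\XX$ by itself. For extremality, your argument is essentially the paper's: both set $\Xb=\sum_i\gl_i\x_i\x_i\T$, use \eqref{eqn:Nec1} and \eqref{eqn:Nec2} to force $\Xb=\x\x\T$, and then apply \cref{lem:ExtremeRank1}. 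You differ only in the final bookkeeping: the paper applies the lemma to the $(n+1)$-dimensional dyads $[1,\x_i\T]\T$, where the leading entry $1$ immediately fixes the scaling and sign, whereas you work with the $n\times n$ blocks and close the argument with a Cauchy--Schwarz/zero-variance computation on the scalars $c_i$ (plus the trivial case $\x=\oo$). Both work; the paper's is marginally slicker, yours requires the extra (standard) assumption $\gl_i>0$, which you should state as the paper does. Your explicit Carath\'eodory upgrade from "no nontrivial decomposition within $\XX$" to extremality in $\conv(\XX)$ is a point the paper leaves implicit, and is a welcome addition.
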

	\begin{proof} 
		Under \eqref{eqn:Nec2} if $\AA(.,.)$ is in the form \eqref{eqn:AARepresentation}, then by \cref{lem:PSDsum} there is $\ggl\in\R^m$ such that $\Ab\coloneqq\sum_{i=1}^{m}\gl_i\Ab_i\in\SS_{++}^n$. Define $\a \coloneqq \sum_{i=1}^{n}\gl_i \a_i$ and  $b~\coloneqq~ \ggl\T\b$. We have that  
		\begin{align*}
			\bar{\Ab}\coloneqq 
			\ga
			\begin{bmatrix}
				1 & \oo\T \\ \oo & \Ob
			\end{bmatrix}+ 
			\begin{bmatrix}
				-b & \sfrac{1}{2}\a\T \\
				\sfrac{1}{2}\a & \Ab
			\end{bmatrix}\in \SS_{++}^{n+1} 
			\quad 
			\Leftrightarrow
			\quad 
			\ga-b > 0,\ 
			\Ab + 
			\frac{1}{4(b-\ga)}\a\a\T \in \SS_{++}^n,
		\end{align*}
		by Schur complementation and since $\ga-b \rightarrow \infty$ and $|b-\ga|^{-1} \rightarrow 0 $ as $ \ga \rightarrow \infty$  and $\Ab\in \SS^n_{++} = \interior\SS_+^n$, there is an  $\ga>0$ such that these positive definiteness conditions hold. From this $\ga$, we construct the set $\GG_{\Ab} \coloneqq \lrbr{\Yb\in\SS_{+}^{n+1}\colon \bar{\Ab}\bullet \Yb = \ga}$ and we see that 
		\begin{align*}
			\hspace{-0.8cm}
			\GG_{\Ab}=
			\lrbr{
				\begin{bmatrix}
					x_0 & \x\T \\ \x & \Xb
				\end{bmatrix}\in\SS_+^{n+1}
				\colon 
				\begin{bmatrix}
					\ga-b & \sfrac{1}{2}\a\T \\
					\sfrac{1}{2}\a & \Ab
				\end{bmatrix}
				\bullet 
				\begin{bmatrix}
					x_0 & \x\T \\ \x & \Xb
				\end{bmatrix} = \ga 
			}
			\supseteq
			\lrbr{
				\begin{bmatrix}
					1 & \x\T \\ \x & \Xb
				\end{bmatrix}\in\SS_+^{n+1}
				\colon 
				\AA(\Xb,\x) = \b
			},
		\end{align*}
		by the construction of $\bar{\Ab}$, since the equation of $\GG_{\Ab}$ is a linear combination of all equations of the contained set (including $x_0=1$). From \eqref{eqn:Nec1} we conclude $\GG_{\Ab}\supseteq \GG(\XX)$. In addition, since $\bar{\Ab}\in\SS_{++}^{n+1}$ we see that $\GG_{\Ab}$ is bounded so that $\GG(\XX)$ is bounded too, in other words 
		\begin{align*}
			\left|
			\begin{vmatrix}
				1 & \x\T \\ \x & \Xb
			\end{vmatrix} 
			\right|^2_{F} 
			= 
			1 +2\x\T\x + \Xb\bullet\Xb
			\leq M,\quad \forall (\x,\Xb)\in\GG(\XX),
		\end{align*}
		for some big enough $M$ and since $1+\Xb\bullet\Xb>0$ and $(\x,\x\x\T)\in\GG(\XX)$ whenever $\x\in\XX$ we see that $\|\x\|_2^2\leq M/2,\ \forall \x\in\XX$. 
		
		For the second assertion, assume that $\x\in\XX\subseteq\conv(\XX)$. Then it is a convex combination of $\x_i\in\XX, \ i \in\irg{1}{k}$ with weights $\ggl\in\Delta_k$ (again, $k$ can be bounded above by Caratheodory's Theorem). We have that $(\x,\x\x\T)\in\GG(\XX)$ and $(\x_i,\x_i\x_i\T)\in\GG(\XX), \ i \in\irg{1}{k}$, hence for $\Xb\coloneqq \sum_{i=1}^{k}\gl_i\x_i\x_i\T$ also $(\x,\Xb)= \sum_{i=1}^{k}\gl_i(\x_i,\x_i\x_i\T)\in\GG(\XX)$ so that $\AA(\Xb,\x) = \b = \AA(\x\x\T,\x)$ by \eqref{eqn:Nec1}, which implies $\oo = \AA(\Xb,\x)-\AA(\x\x\T,\x) = \AA(\Xb-\x\x\T,\oo)$. From \eqref{eqn:Nec2} we thus get $\Xb=\x\x\T$ so that we arrive at  
		\begin{align*}
			\begin{bmatrix}
				1 & \x \T \\ \x & \x\x\T 
			\end{bmatrix}
			= \begin{bmatrix}
				1 & \x \T \\ \x & \Xb 
			\end{bmatrix}
			= 
			\sum_{i=1}^{k}
			\gl_i
			\begin{bmatrix}
				1\\ \x_i
			\end{bmatrix}
			\begin{bmatrix}
				1\\ \x_i
			\end{bmatrix}\T 
			\mbox{ which implies }
			\gl_i
			\begin{bmatrix}
				1\\ \x_i
			\end{bmatrix}
			\begin{bmatrix}
				1\\ \x_i
			\end{bmatrix}\T
			= 
			\ga_i 
			\begin{bmatrix}
				1\\ \x
			\end{bmatrix}
			\begin{bmatrix}
				1\\ \x
			\end{bmatrix}\T,
		\end{align*}
		for some $\ga_i\geq 0, \ i \in \irg{1}{k}$ by \cref{lem:ExtremeRank1} a). Since all $\gl_i>0$, as otherwise we could drop it from the convex combination, the final equation above implies that $\gl_i = \ga_i$ so that $\ga_i/\gl_i = 1$ and \cref{lem:ExtremeRank1}$~$b) gives $[1,\x_i\T]\T = \pm[1,\x\T]\T$. But $1\neq -1$ so the only solution is $\x = \x_i$ for all $i\in\irg{1}{k}$ so that $\x$ is indeed an extreme point of $\conv(\XX)$. 		
	\end{proof}
	
	\subsection{A double lifting approach for general $\XX$ }\label{sec:A double lifting approach}
	By \cref{thm:ExistenceofCT}, a concave tent may fail to exist if the set $\XX$ is not a set of extreme points of $\conv(\XX)$. In this section, we are going to discuss how to circumvent this issue by employing a double lifting strategy. The theorem below can be interpreted as a generalization of \eqref{thm:chlifting}. 
	
	\begin{thm}
		Let $\XX\subseteq \R^n$ be a compact and let $f$ be a function conforming to \cref{asm:CompactU}, where $h$ is equal to zero everywhere. Assume, the function $g\colon \R^n\times \SS^n\rightarrow \R$ is given by
		\begin{align*}
			g(\x,\Xb) 
			\coloneqq 
			\sup_{\Psi,\u,\Ub} \lrbr{
				\begin{array}{l}
					\Ab\bullet\Xb + 2\a\T\x + 2\Bb\T\bullet \Psi + \Cb\bullet\Ub + 2\cc\T\u 
					\colon
					\\
					\begin{bmatrix}
						1  & \u\T & \x\T \\
						\u & \Ub  & \Psi\T \\
						\x & \Psi & \Xb
					\end{bmatrix} \in \CC, \
					\begin{bmatrix}
						1  & \u\T \\
						\u & \Ub   
					\end{bmatrix}\in\GG(\UU),\ 			
				\end{array}				
			},
		\end{align*}
		where $\CC\subseteq \SS^{n+q+1}$ is as in \cref{thm:ConcaveTentOverExtreme}.
		Also, let $\bar{h}\colon \R^n\times \SS^n \rightarrow \R,\ \left(\x,\Xb\right)\mapsto \bar{h}(\x,\Xb)$ be a closed concave function. Then 
		\begin{align*}
			v^*
			&\coloneqq 
			\inf_{\x\in\R^n} \lrbr{\bar{h}(\x,\x\x\T)+f(\x)\colon \x\in\XX}=\inf_{\x\in\R^n,\, \Xb\in\SS^n}\lrbr{\bar{h}(\x,\Xb)+g(\x,\Xb)\colon 
				\begin{bmatrix}
					1 & \x\T \\ \x & \Xb
				\end{bmatrix}\in \GG(\XX)
			}.				
		\end{align*}
	\end{thm}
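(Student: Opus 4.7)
My strategy is to reduce the claimed equality to three ingredients: (a) joint concavity of $g$ in $(\x,\Xb)$, (b) identification of the extreme points of $\GG(\XX)$ as rank-one lifts of elements of $\XX$, and (c) the tent identity $g(\x,\x\x\T)=f(\x)$ for every $\x\in\XX$. With these in hand the two inequalities are immediate: for ``LHS~$\ge$~RHS'', every $\x\in\XX$ lifts to $(\x,\x\x\T)\in\GG(\XX)$, and by (c) the objectives on the two sides agree there; for ``LHS~$\le$~RHS'', by (a) the RHS objective is concave on the compact convex set $\GG(\XX)$, so its infimum is attained at some extreme point, which by (b) has the form $(\x^*,\x^*\x^{*\T})$ with $\x^*\in\XX$, at which the RHS objective equals $\bar h(\x^*,\x^*\x^{*\T})+f(\x^*)$ and is therefore at least the LHS infimum.

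\textbf{Establishing (a) and (b).}
Ingredient (a) follows from \cref{lem:Concavity} applied exactly as in the proof of \cref{thm:ConcaveTentOverExtreme}: the pair $(\x,\Xb)$ enters the sup defining $g$ only affinely (as entries of the PSD-cone constraint and as a coefficient of the objective), so $g$ is jointly concave, and adding the concave $\bar h$ preserves this. Finiteness of the sup on $\GG(\XX)$ is guaranteed by the PSD-induced bound $|(\Psi)_{ij}|^2\le (\Xb)_{ii}(\Ub)_{jj}$ together with compactness of $\GG(\UU)$, reusing the boundedness argument laid out in \cref{thm:ConcaveTentOverExtreme}. For (b), since $\XX$ is compact, \cite[Proposition~4]{noauthor_gabl_nodate} yields that $\GG(\XX)$ equals the plain convex hull (no closure) of the compact set of rank-one lifts of $\XX$, so Milman's theorem places every extreme point of $\GG(\XX)$ back inside this generating set.

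\textbf{Establishing (c), and the main obstacle.}
Ingredient (c) mirrors the ``$f=g$''-step from the proof of \cref{thm:ConcaveTentOverExtreme}. For $g(\x,\x\x\T)\ge f(\x)$, picking any $\u\in\UU$ optimal for $f(\x)$ and inserting $(\Psi,\u,\Ub)=(\x\u\T,\u,\u\u\T)$ produces a rank-one feasible triple whose full matrix block lies in $\GG(\UU\times\XX)\subseteq\CC$ and whose $(\u,\Ub)$-block lies in $\GG(\UU)$, delivering the value $f(\x)$. For $g(\x,\x\x\T)\le f(\x)$, the equality $\Xb=\x\x\T$ is now imposed directly (taking the role that \eqref{eqn:Nec1}--\eqref{eqn:Nec2} played in \cref{thm:ConcaveTentOverExtreme}); the PSD constraint and \cref{lem:ExtremeRank1} force $\Psi=\x\u\T$, and the supremum collapses to the second line of \eqref{eqn:Intermediate} over $\GG(\UU)$, whose linear objective over this compact convex set attains its maximum at an extreme point $(\u,\u\u\T)$ with $\u\in\UU$, giving value $f(\x)$. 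The main obstacle is the extreme-point identification in (b): without the compactness result of \cite[Proposition~4]{noauthor_gabl_nodate}, $\ext(\GG(\XX))$ could only be placed in the closure of the rank-one lifts, and further work would be needed to conclude actual membership of each extreme point in $\XX$; the cited proposition bypasses this subtlety, after which the entire argument is a repackaging of the tent-construction machinery already developed for \cref{thm:ConcaveTentOverExtreme}.
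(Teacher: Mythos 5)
Your proposal is correct and follows essentially the same route as the paper: establish $g(\x,\x\x\T)=f(\x)$ by reusing the argument of \cref{thm:ConcaveTentOverExtreme}, get one inequality by lifting feasible points, and get the other from concavity of the lifted objective over the compact convex set $\GG(\XX)$ whose extreme points are the rank-one lifts (via \cite[Proposition 4]{noauthor_gabl_nodate}). Your write-up is somewhat more careful than the paper's — explicitly invoking Milman's theorem for the extreme-point identification and noting finiteness of the supremum — but these are refinements of, not departures from, the published argument.
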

	\begin{proof}
		First, we observe that $g$ is concave by \cref{lem:Concavity} and it is closed function by similar arguments as in \cref{thm:ConcaveTentOverExtreme}, where boundedness follows again from $(\u,\Ub)$ being bounded on $\GG(\UU)$, $(\x,\Xb)$ is fixed and so $\Psi$ is bounded since $\CC\subseteq \SS_{+}^{n+q+1}$. In addition $g$ is finite on $\GG(\XX)$ since $\CC\supseteq \GG(\XX\times\UU)$. Define $\XX_{lift}\coloneqq \lrbr{(\x,\x\x\T)\in\SS^{n+1}\colon \x\in\XX}\subseteq \GG(\XX)$ and  $f_{lift} \colon \XX_{lift} \rightarrow \R, \ (\x,\x\x\T)\mapsto f(\x)$. Then $v^* = \inf_{\x,\Xb}{}\lrbr{\bar{h}(\x,\Xb)+f_{lift}(\x,\Xb) \colon (\x,\Xb)\in\XX_{lift}}$. We also see that $g(\x,\Xb)= f(\x) = f_{lift}(\x,\Xb)$ for all $(\x,\x\x\T)\in\XX_{lift}$  by repeating the arguments \cref{thm:ConcaveTentOverExtreme} (once $\Xb=\x\x\T$, which is always true on $\XX_{lift}$, the arguments proceed verbatim). Thus, $g$ is a concave tent of $f_{lift}$ over $\XX_{lift}$ so that $v^* = \inf_{\x,\Xb}{}\lrbr{\bar{h}(\x,\Xb)+g(\x,\Xb) \colon (\x,\Xb)\in\conv(\XX_{lift})}$ by \cref{thm:ConcaveReformulations} and since $g$ is closed, i.e. upper semicontinuous (see \cite[Theorem 7.1]{rockafellar_convex_2015}) we can replace the feasible set with its closure $\cl\conv(\XX_{lift})=\GG(\XX)$.
	\end{proof}
	
	We can think of $g$ as the result of a double lifting. First, we lift the $f$ into a higher dimensional space to obtain a function $f_{lift}$ defined on the extreme points of $\GG(\XX)$, for which we build, in a second lifting step, the function $g$, which is  a concave tent of $f_{lift}$ over $\GG(\XX)$. The necessary condition in \cref{thm:ExistenceofCT} is circumvented since even a non-extreme point $\x\in\conv(\XX)\cap\XX$ corresponds to a point $(\x,\x\x\T)$ in the lifted space that is an extreme point of $\GG(\XX)$. Also, the boundedness condition on $\XX$ is alleviated since in the proof of \cref{thm:ConcaveTentOverExtreme} it is merely needed (in the guise of \eqref{eqn:Nec2}) to ensure the variables $\Xb=\x\x\T$ whenever $\x\in\XX$, but this feature is absorbed by the structure of $\XX_{lift}$ in the theorem above. 
	
	The function $h$ is replaced by $\bar{h}$ which can have a more complicated structure since it only needs to be concave in the lifted space. For example, $\bar{h}$ can be the pointwise infimum of linear functions $\bar{h}(\x,\Xb) \coloneqq \inf_{i\in\II}\lrbr{\Qb_i\bullet\Xb+2\q_i\T\x+q_i}$ in the lifted space so that in under the restriction $\Xb = \x\x\T$ the function becomes the pointwise infimum of (not necessarily convex) quadratic functions in the original space of variables. If we set $\bar{h}$ to be a quadratic function and $f$ to be constant zero then the above theorem recovers \eqref{thm:chlifting}. Thus, our theory can be seen as an expansion of the classical copositive optimization framework discussed in the introduction.  
	
	It is well known that even linear optimization over $\GG(\XX)$ can be very difficult let alone nonlinear optimization. We will defer investigations on how to navigate this difficulty to future research. Therefore, our discussion henceforth will focus on the concave tents described in \cref{thm:ConcaveTentOverExtreme}. 
	
	\subsection{Some guidance on $\GG(\XX\times\UU)$ and its relaxations}
	In order to build the our concave tents an exact representation of $\GG(\XX\times\UU)$ is not necessary in principle, since any instance of $\CC$ (defined as in \cref{thm:ConcaveTentOverExtreme}) that is a subset of the positive semidefinite cone will suffice. However, the tighter we approximate $\GG(\XX\times\UU)$, the smaller the concave tent will be (since we restrict the feasible set of the defining supremum problem), which reduces the risk of creating spurious local minima in the concave reformulation. As stated in the introduction, the literature on the transformation $\GG(.)$ is vast, so many tools exist for tackling this challenging object. However, if this transformation is applied to the Cartesian product $\XX\times\UU$ the situation becomes even more complicated, despite the usefulness of akin objects, that appear in for example \cite{xu_copositive_2018, xu_improved_2023} to name a few.

	Characterizations of $\GG(\FF)$ for any arbitrary $\FF\subseteq \R^n$ often involve the so-called set-completely positive cone $\CPP(\KK) \coloneqq \cl\conv\lrbr{\x\x\colon \x\in\KK}\subseteq \SS^n$, where $\KK\subseteq\R^n$ is a closed convex cone, and are regularly given as 
	\begin{align}
		\GG(\FF) 
		\coloneqq \lrbr{
			(\x,\Xb)\in\CPP(\R_+\times\KK_{\FF}) \colon \AA_{\FF}(\x,\Xb) =\b_{\FF}
		},
	\end{align}
	where $\KK_{\FF},\ \AA_{\FF},\ \b_{\FF}$ are a suitably chosen closed convex cone, linear map, and vector, respectively (see \cite[Section 2.1]{bomze_optimization_2023} for a thorough introduction to such constructions). This construction shifts the difficulty into the conic constraint.  In general, certifying membership in such a cone is NP-hard. An extensive list of references to results on these cones and their approximations schemes can be found in \cite[Section 2.4]{bomze_optimization_2023}.

	If we have such characterizations of $\GG(\XX)$ and $\GG(\UU)$ than it is immeadiate that 
	\begin{align*}
		\GG(\XX\times\UU)
		=
		\lrbr{
			\Mb\in\CPP(\R_+ \times \KK_{\XX} \times \KK_{\UU})
			\colon 
			\AA_{\XX}(\x,\Xb) = \b_{\XX},\
			\AA_{\UU}(\u,\Ub) =\b_{\UU}				
		},
	\end{align*}
	with $\Mb\in\SS^{n+q+1}$ is a matrix with the same subdivisions as those in the description of \eqref{eqn:CT1}, which likewise shifts the difficulty to the characterization of $\CPP(\R_+\times\KK_{\XX}\times\KK_{\UU})$. In case both $\KK_{\XX}\times\KK_{\UU}$ are the nonnegative orthant, one can work with the approximation strategies mentioned above. However, outside this special case, this matrix cone is notoriously difficult to work with. Since $\CPP(\R_+\times\KK_{\XX}\times\KK_{\UU}) \implies (\x,\Xb)\in\CPP(\R_+\times\KK_{\XX}),\ (\u,\Ub)\in\GG(\R_+\times \KK_{\UU})$ a reasonable startingpoint for an outer approximation of $\GG(\XX\times\UU)$ is given by 
	\begin{align*}
		\CC_1 
		\coloneqq &
		\lrbr{
			\Mb\in\SS_+^{n+q+1}
			\colon 
			\begin{array}{l}
				\AA_{\XX}(\x,\Xb) = \b_{\XX},\ (\x,\Xb)\in\CPP(\R_+\times\KK_{\XX}),\\
				\AA_{\UU}(\u,\Ub) = \b_{\UU},\ (\u,\Ub)\in\CPP(\R_+\times \KK_{\UU})
			\end{array}
		}\\
		= &
		\lrbr{
			\Mb\in\SS_+^{n+q+1} 
			\colon  
			(\x,\Xb)\in\GG(\XX),\ 
			(\u,\Ub)\in\GG(\UU)
		},			
	\end{align*}
	which is still not tractable, due to the set-completely positive constraint. Still we have some takeaways from these two representations. The first tells us that even if we have exact representations of of $\CPP(\R_+\times\KK_{\XX})$ and $\CPP(\R_+\times\KK_{\UU})$ we cannot exactly represent $\CPP(\R_+\times\KK_{\XX}\times\KK_{\UU})$, but all knowledge on the former two cones can be used to strengthen relaxations of $\CC_1$. Also, a strengthening can be obtained by adding valid constraints on $\Psi$. For example, if $\KK_X\subseteq \R_+^{n_x}$, then $\mathrm{Rows}(\Psi)\subseteq\KK_{\UU}$. Later, in \cref{sec:Heuristic}, we will use results from \cite{noauthor_zhen_nodate} for generating further valid constraints on $\Psi$. Other ways to tighten this representation are discussed in \cite[Section 2.3.]{bomze_optimization_2023} and references therein. The second representation of $\CC_1$ show us that, likewise, all knowledge on $\GG(\XX)$ and $\GG(\UU)$ can be used to strengthen its relaxations; again, we refer to \cite[Section 2.1]{bomze_optimization_2023} and references therein. 
	
	\section{Quality of our concave tents and comparisons}
	We will now investigate the quality of the concave tents constructed above in the sense that we will compare them to two important classical instances of concave tents: the concave envelope and concave reformulations based on concave quadratic updates. We will show that, under some technical assumptions, our concave tents are sandwiched between the two and that they attain the lower bound, which is the concave envelope. 
	
	\begin{rem}
		In 
		\cref{thm:ConcaveEnvelope} and \ref{thm:CTvsClassical} below we assume that the concave term $h$ in the definition of $f$ is removed. We conjecture that similar statements can be proved with $h$ included, most likely after a thorough modification of the definition of $g$. However, the arguments, if valid at all, would be significantly more complicated. To keep the present text concise, we have therefore decided to defer such proofs to a future short communication in case our conjecture turns out to be true.  
	\end{rem}

	\subsection{Concave envelope characterization}
	Based on recent results in \cite{gabl_finding_2024} we can show that in case $\CC=\GG(\UU\times\XX)$, a respective concave tent constructed in \cref{thm:ConcaveTentOverExtreme} is actually the concave envelope of $f$ over $\XX$. Thus, our family of concave tents includes the best concave tent in the sense that it coincides with the smallest concave overestimator of $f$ over $\XX$. Larger concave tents may have an increased number of local minima which is computationally undesirable, so the result below is encouraging.     
	
	\begin{thm}\label{thm:ConcaveEnvelope}
		Maintain the assumptions of \cref{thm:ConcaveTentOverExtreme} and assume that $h$ is the zero function, $\XX$ is closed, and $\CC=\GG(\UU\times\XX)$. Then the function $g$ is the concave envelope of $f$ over $\XX$. 
	\end{thm}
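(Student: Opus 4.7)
The plan is to establish a two-sided sandwich between $g$ and the concave envelope $\hat f$ of $f$ over $\XX$. For the ``upper'' characterization, I would recall, from the proof of \cref{thm:ExistenceofCT}, that under the present hypotheses (here $\XX$ compact since (\ref{eqn:Nec1})--(\ref{eqn:Nec2}) hold, and $f$ upper semicontinuous by \cref{prop:continuity}) the concave envelope of $f$ over $\XX$ admits the representation
\begin{align*}
\hat f(\x) \;=\; \sup_{k,\ggl,\x_i}\lrbr{\sum_{i=1}^{k}\gl_i f(\x_i) \,\colon\, \sum_{i=1}^{k}\gl_i\x_i=\x,\ \ggl\in\Delta_k,\ \x_i\in\XX},
\end{align*}
with the supremum attained.

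For the direction $g\ge\hat f$, I would simply combine concavity of $g$ on $\conv(\XX)$ (\cref{thm:ConcaveTentOverExtreme}) with the fact that $g$ is a concave tent, hence $g(\x_i)=f(\x_i)$ for $\x_i\in\XX$: Jensen's inequality gives $g(\x)\ge\sum_i\gl_i g(\x_i)=\sum_i\gl_i f(\x_i)$ for every admissible convex combination, and taking the supremum yields $g(\x)\ge\hat f(\x)$.

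The substantive half is $g\le\hat f$, and this is where the identity $\CC=\GG(\UU\times\XX)$ will carry all the weight. For any $\x$ and any feasible quadruple $(\Psi,\u,\Ub,\Xb)$ in the supremum defining $g(\x)$, the big matrix lies in $\GG(\UU\times\XX)$. Invoking \cite[Proposition 4]{noauthor_gabl_nodate} together with compactness of $\UU\times\XX$, I would write it as a \emph{finite} convex combination (no closure needed)
\begin{align*}
\begin{pmatrix} 1 & \u\T & \x\T \\ \u & \Ub & \Psi\T \\ \x & \Psi & \Xb \end{pmatrix}
\;=\;\sum_{i=1}^{k}\gl_i
\begin{pmatrix} 1 \\ \u_i \\ \x_i \end{pmatrix}\begin{pmatrix} 1 \\ \u_i \\ \x_i \end{pmatrix}\T,\qquad \u_i\in\UU,\ \x_i\in\XX,\ \ggl\in\Delta_k.
\end{align*}
Reading off the blocks gives $\x=\sum_i\gl_i\x_i$, $\u=\sum_i\gl_i\u_i$, $\Xb=\sum_i\gl_i\x_i\x_i\T$, $\Ub=\sum_i\gl_i\u_i\u_i\T$, $\Psi=\sum_i\gl_i\x_i\u_i\T$; linearity of $\AA$ and (\ref{eqn:Nec1}) applied to each $\x_i$ confirm the auxiliary constraint $\AA(\Xb,\x)=\b$ automatically, so no admissibility is lost. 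The objective evaluates to
\begin{align*}
\Ab\bullet\Xb+2\a\T\x+2\Bb\T\bullet\Psi+\Cb\bullet\Ub+2\cc\T\u
\;=\;\sum_{i=1}^{k}\gl_i\bigl[\x_i\T\Ab\x_i+2\a\T\x_i+2\u_i\T\Bb\x_i+\u_i\T\Cb\u_i+2\cc\T\u_i\bigr],
\end{align*}
and since $\u_i\in\UU$, each bracket is bounded above by $f(\x_i)$ (recall $h\equiv 0$), giving a total bound $\sum_i\gl_i f(\x_i)\le \hat f(\x)$. Taking the supremum over feasible quadruples yields $g(\x)\le\hat f(\x)$, and combined with the reverse inequality this closes the argument.

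The main obstacle I anticipate is ensuring the decomposition step is \emph{finite and exact}, rather than only a limit of such combinations, because otherwise the inequality $\sum_i\gl_i[\cdots]\le\sum_i\gl_i f(\x_i)$ would only survive in a limiting form (where upper semicontinuity of $f$ would then have to be invoked carefully). Fortunately, \cite[Proposition 4]{noauthor_gabl_nodate} furnishes exactly the finite-combination property under compactness of $\UU\times\XX$; once that is in place, the rest is a bookkeeping exercise matching blocks of the lifted matrix to blocks of the objective.
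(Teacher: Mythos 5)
Your proof is correct, but it takes a genuinely different and more self-contained route than the paper's. The paper's argument is essentially a citation: it notes that with $\CC=\GG(\UU\times\XX)$ the constraint $\AA(\Xb,\x)=\b$ is redundant by \eqref{eqn:Nec1}, so that $g$ falls under the scope of Theorem~6 of \cite{noauthor_gabl_nodate}; point~5 of that theorem, combined with the zero duality gap already established in the proof of \cref{thm:ConcaveTentOverExtreme}, gives $g=\cl(\hat f)$, and the closure is then removed by observing that the concave envelope of a continuous function over a compact set is itself closed. You bypass both the external theorem and the closure step: the direction $g\ge\hat f$ is immediate from \cref{thm:ConcaveTentOverExtreme} plus Jensen's inequality, and for $g\le\hat f$ you need only the elementary compactness fact (Proposition~4 of the same reference) to write any feasible lifted matrix as a \emph{finite} convex combination of rank-one generators drawn from $\lrbr{1}\times\UU\times\XX$, after which the objective becomes a convex combination of quadratic evaluations, each dominated by $f(\x_i)$ because $\u_i\in\UU$ and $h\equiv 0$. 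You correctly flag that finiteness of the decomposition is the crux --- it is precisely what lets you avoid the limiting argument that forces the paper through $\cl(\hat f)$. What the paper's route buys is brevity and a link to the duality machinery of the companion paper; what yours buys is a direct, verifiable argument that makes transparent why the choice $\CC=\GG(\UU\times\XX)$ is exactly tight. One minor point: with $h\equiv 0$, \cref{prop:continuity} actually yields full continuity of $f$ (not merely upper semicontinuity), which is what justifies identifying the sup-over-convex-combinations formula with the concave envelope and its attainment.
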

	\begin{proof} 
		Denote the concave envelope of $f$ over $\XX$ as $\hat{f}$.
		Note, that by \eqref{eqn:Nec1} the linear constraint can be omitted in the description of $g$ given the present choice of $\CC$, so that $g$ is of a form such that it falls under the regime of \cite[Theorem 6.]{gabl_finding_2024}. In the language of that theorem, the triple $(f,g,\UU\times\XX)$ takes the role of $(\phi,\hat{\phi},\FF)$ discussed there.
		We already argued in \cref{thm:ConcaveTentOverExtreme} that the supremum defining $g$ has zero duality gap. This implies by point 5.\ of \cite[Theorem 6.]{gabl_finding_2024} that $g=\cl(\hat{f})$. However, from the proof of \cref{prop:continuity} we know that $f$ is continuous if $h$ is continuous, which is the case for the zero function. The concave envelope of continuous functions over compact sets is itself closed by \cite[Corollary 17.2.1.]{rockafellar_convex_2015}. Thus, $g = \cl(\hat{f}) = \hat{f}$ as desired. 
	\end{proof}
	
	While the theorem states that it is in principle possible to recover the concave envelope, the assumptions are much more restrictive than what is needed for the derivation of weaker concave tents, since the set $\GG(\UU\times\XX)$ is highly intractable for many instances of $\XX$ and $\UU$. Thus, a trade-off has to be made between the quality of the concave tent and the computational effort caused by strengthening the cone $\CC$. In \cref{sec:Heuristic} we give an example based on semidefinite and second-order cone constraints that performs reasonably well in the numerical experiments discussed later in \cref{sec:Numerical}.
	
	\subsection{Comparison to a classic construction}
	For $\XX\coloneqq \lrbr{\x\in\R^n\colon \AA(\x\x\T,\x) = \b}$, where $\AA$ is of the form \eqref{eqn:AARepresentation} and conforms with \eqref{eqn:Nec2} and for a function $f$ that conforms with \cref{asm:CompactU} there is a classical way to construct concave tents by considering 
	\begin{align}\label{eqn:classicCT}
		g_{c}(\x,\ggl) \coloneqq f(\x)-\ggl\T(\AA(\x\x\T,\x)-\b) =f(\x)- \sum_{i=1}^{m}\gl_i(\Ab_i\bullet\x\x\T+\a_i\T\x-b_i).
	\end{align}
	Whenever $\x\in\XX$ we have that $g_{c}(\x,\ggl) = f(\x)$, and by \cref{lem:PSDsum} there is a choice for $\ggl$ such that $-\ggl\T\AA(\x\x\T,\x)$ becomes a concave quadratic function, which opens up the possibility that $g_{c}(\x,\ggl)$ becomes concave in $\x$ (perhaps after a scaling of $\ggl$) and therefore a concave tent of $f$ over $\XX$. Such approaches have for example been used in \cite{pardalos_constrained_2006,raghavachari_connections_1969} as discussed in the introduction. However, outside of such special cases, it is nontrivial to find the required value for $\ggl$ if such values exist at all.  If the Hessian of $f$, say $\Hb_f$, exists for all $\x\in\XX$, then it suffices to choose $\ggl$ such that the hessian of $g_c$ fulfills $-(\Hb_{f}(\x)-\sum_{i=1}^{m}2\gl_i\Ab_i )\in\SS_+^n,\ \forall \x\in\XX$, which may  be an intractable choice unless $\Hb_{f}(\x)$ depends on $\x$ in a very simple manner. In addition, $\Hb_{f}$ might be difficult to find or be nonexistent. It may be even more difficult to find the value of $\ggl$ that gives the smallest concave tent based on $g_c$. 
	Finally, even if an appropriate choice of $\ggl$ does exist, finding supergradients of $g_{c}$ may be challenging, for example, if the supremum term in $f$ does not allow for a simple construction of derivatives. By comparison, our approach allows for super-differentiation under mild assumptions as will be discussed in \cref{sec:Super-derivatives based on conic duality}.    
	
	On the other hand, if an appropriate choice for $\ggl$ does exist and the resulting concave tent can be differentiated easily, then the classical construction could be computationally much cheaper than the ones we proposed in the prior sections. Still, this hypothetical advantage is paid for in terms of the quality of the so-constructed concave tent. Indeed, in the main result of this section, we will show that even the weakest of the concave tents introduced in \cref{thm:ConcaveTentOverExtreme} underestimates any of the concave tents based on $g_{c}$.
	
	\begin{lem}\label{lem:GGUUdecomp}
		Let $\R^q\supseteq\UU\neq\emptyset$ be compact. Then for $\Ub\in\SS^{q+1},\ \Psi\in\R^{n\times (q+1)}, \ \Xb\in\SS^n$ we have 
		\begin{align*}
			\begin{bmatrix}
				\Ub & \Psi\T \\ \Psi & \Xb
			\end{bmatrix}\in\GG(\UU\times\R^n),
			\quad 
			\mbox{ if and only if }
			\quad 
			\begin{bmatrix}
				\Ub & \Psi\T \\ \Psi & \Xb
			\end{bmatrix}\in\SS^{q+n+1},\ \Ub\in\GG(\UU).
		\end{align*}	
	\end{lem}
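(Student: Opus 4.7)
The forward inclusion is immediate: any element of $\GG(\UU\times\R^n)$ is a (limit of) convex combinations of rank-one matrices $[1,\u\T,\x\T]\T[1,\u\T,\x\T]$ with $\u\in\UU$, $\x\in\R^n$, whose top-left $(q+1)\times(q+1)$ block is the matching convex combination of $[1,\u_i\T]\T[1,\u_i\T]$ and hence lies in $\GG(\UU)$, and which is positive semidefinite. (This already forces the reading of ``$\SS^{q+n+1}$'' in the statement as $\SS^{q+n+1}_+$, since mere symmetry is not preserved by the forward inclusion in the first place.)

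The converse is the substantive direction. Assume $M\coloneqq\begin{bmatrix}\Ub & \Psi\T\\ \Psi & \Xb\end{bmatrix}\succeq\Ob$ with $\Ub\in\GG(\UU)$. Using the compactness observation following \cite[Proposition 4]{noauthor_gabl_nodate}, compactness of $\UU$ yields an exact convex decomposition $\Ub=\sum_{i=1}^k\gl_i[1,\u_i\T]\T[1,\u_i\T]$ with $\ggl\in\Delta_k$, $\gl_i>0$ and $\u_i\in\UU$; gather these into the factor $\tilde{W}\coloneqq[\sqrt{\gl_i}[1,\u_i\T]\T]_{i=1}^k$, so that $\tilde{W}\tilde{W}\T=\Ub$. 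Positive semidefiniteness of $M$ forces $\mathrm{range}(\Psi\T)\subseteq\mathrm{range}(\tilde{W})$ via the Schur range condition, so $\Psi\T=\tilde{W}D$ for some $D\in\R^{k\times n}$, which after a harmless projection onto $\mathrm{range}(\tilde W\T)$ we may also take to satisfy $\Psi\Ub^+\Psi\T=D\T D$; the PSD Schur complement then gives $\Xb_2\coloneqq\Xb-D\T D\succeq\Ob$.

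Now split $M=M_1+R$ with $M_1\coloneqq\begin{bmatrix}\Ub & \Psi\T\\ \Psi & D\T D\end{bmatrix}$ and $R\coloneqq\begin{bmatrix}\Ob & \Ob\\ \Ob & \Xb_2\end{bmatrix}$. Defining $\x_i\coloneqq D_{i,\cdot}\T/\sqrt{\gl_i}\in\R^n$, a block-by-block check shows $M_1=\sum_{i=1}^k\gl_i[1,\u_i\T,\x_i\T]\T[1,\u_i\T,\x_i\T]$, which is a convex combination of generators and hence lies in $\GG(\UU\times\R^n)$. For $R$, decompose $\Xb_2=\sum_\ell\y_\ell\y_\ell\T$ and set $\rho_\ell\coloneqq[0,\oo\T,\y_\ell\T]\T\in\R^{q+n+1}$; each $\rho_\ell\rho_\ell\T$ is a recession direction of $\GG(\UU\times\R^n)$, because for any generator $N=[1,\u\T,\y\T]\T[1,\u\T,\y\T]$ and any $t\geq 0$ one has the averaging identity $N+t\rho_\ell\rho_\ell\T=\tfrac12\mathbf{v}_+\mathbf{v}_+\T+\tfrac12\mathbf{v}_-\mathbf{v}_-\T$ with $\mathbf{v}_\pm\coloneqq[1,\u\T,(\y\pm\sqrt{t}\,\y_\ell)\T]\T$, which exhibits $N+t\rho_\ell\rho_\ell\T$ as a convex combination of two genuine generators. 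Extending by convexity shows the same property for every element of $\GG(\UU\times\R^n)$, so $M=M_1+R\in\GG(\UU\times\R^n)$.

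The main obstacle is synchronising the two decompositions: the combinatorial one $\Ub=\tilde{W}\tilde{W}\T$ driven by $\u_i\in\UU$, and the Schur-type factorization $\Psi\T=\tilde{W}D$. Once the projection step aligning them so that $\Psi\Ub^+\Psi\T=D\T D$ is in place, the explicit $\x_i$'s reconstruct $M_1$ verbatim, and the PSD remainder $R$ is absorbed by the recession-cone structure of $\GG(\UU\times\R^n)$ without further difficulty.
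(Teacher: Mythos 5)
Your proof is correct, but it takes a genuinely different route from the paper's. The paper also starts from a Gram factorization of the full block matrix and a convex decomposition of $\Ub$ over $\lrbr{1}\times\UU$, but then \emph{synchronizes} the two: it pads both decompositions to a common rank via Carath\'eodory and invokes the fact that two factorizations of the same PSD matrix differ by an orthogonal factor (\cite[Lemma 2.6]{groetzner_factorization_2020}), which rotates the $\Xb$-factor so that the entire matrix is written exactly as a convex combination of generators $[1,\u_i\T,\y_i\T]\T[1,\u_i\T,\y_i\T]$ with no remainder. You instead use the generalized Schur complement to peel off the minimal completion $M_1$ (whose blocks you verify termwise against the given decomposition of $\Ub$) and absorb the PSD slack $\Xb-\Psi\Ub^+\Psi\T$ through recession directions of $\GG(\UU\times\R^n)$ supported on the $\Xb$-block, via the averaging identity with $\y\pm\sqrt{t}\,\y_\ell$. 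Your route is more elementary (no orthogonal-alignment lemma, no rank padding) and makes the geometric reason for the lemma explicit --- the second factor is all of $\R^n$, so the $\Xb$-block of $\GG(\UU\times\R^n)$ is free up to PSD slack --- whereas the paper's route yields the slightly stronger byproduct of an exact finite rank-one decomposition of the whole matrix. You are also right that the ``$\SS^{q+n+1}$'' in the statement must be read as $\SS^{q+n+1}_+$; the paper's own proof of the converse direction indeed starts from positive semidefiniteness of the block matrix.
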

	\begin{proof}
		If $\Mb\in\GG(\UU\times\R^n)$ then it is also psd since matrices of the form $\y\y\T$ are psd and the psd-cone is closed. Further, it is the limit point of $\Mb_j = \sum_{i=1}^{k}\gl_i\y^j_i(\y^j_i)\T$, with $ \y^j_i = [1,(\u^j_i)\T,(\x^j_i)\T]\in\lrbr{1}\times\UU\times\R^n,\ i\in\irg{1}{k},\ \ggl^j\in\Delta_k,\ \forall j\in\N$ so that the $\Ub$-block of $\Mb$ is the limit point of convex combinations of rank one matrices recruited from $\lrbr{1}\times\UU$, which certifies $\Ub\in\GG(\UU)$. 
		
		Conversely, from the psd constraint we get a factorization $\Ub = \Vb\Vb\T,\ \Psi = \Vb\Zb\T, \ \Xb = \Zb\Zb\T$ from some $\Vb\in\R^{q+1\times k}, \ \Zb\in\R^{n\times k}$. Also, since $\UU$ is compact we get from $\Ub\in\GG(\UU)$ that $\Ub = \sum_{i=1}^{l}\gl_i\w_i\w_i\T= \Wb\Diag(\ggl)\Wb$  for $\w_i\in\lrbr{1}\times\UU, \ i\in\irg{1}{l},\ \ggl\in\Delta_l$ and $\Wb\coloneqq [\w_1,\dots,\w_l]$. From Caratheodory's Theorem, we can always set $k = q+n+1$ and $l=q+1<k$, so that w.l.o.g. we can assume $k=l$ since if $l$ is too small we can split summands in the convex combinations until $l$ is large enough. We can also assume $\ggl>0$ since otherwise we can just drop the summands with zero weight. Therefore $\Lambda \coloneqq \Diag(\ggl)$, the diagonal matrix whose diagonal entries are the entries of $\ggl$, has an invertible square root.  Set $\ol{\Wb}\coloneqq \Wb\Lambda^{1/2}$, then the fact that $\ol{\Wb}\ol{\Wb}\T = \V\V\T$ implies that $\ol{\Wb}=\Vb\Qb$ for some orthonormal $\Qb\in\R^{k\times k}$ by \cite[Lemma 2.6.]{groetzner_factorization_2020}. Set $\Yb\coloneqq\Zb\Qb\Lambda^{\sfrac{-1}{2}}$, then we have 
		\begin{align*}				
			\begin{array}{l}
				\Ub = \Vb\Qb\Qb\T\Vb\T = \ol\Wb\ol\Wb\T = \Wb\Lambda\Wb\T,\\
				\Psi = \Zb\Qb\Qb\T\Vb\T  = \Zb\Qb\ol\Wb\T = \Zb\Qb\Lambda^{\sfrac{-1}{2}}\Lambda\Wb\T = \Yb\Lambda\Wb\T, \\
				\Xb  = \Zb\Qb\Qb\T\Zb\T = 
				\Zb\Qb\Lambda^{\sfrac{-1}{2}}\Lambda\Lambda^{\sfrac{-1}{2}}\Qb\T\Zb\T=  	\Yb\Lambda\Yb\T,			
			\end{array}	
			\mbox{ so that }							
			\begin{bmatrix}
				\Ub & \Psi\T \\ \Psi & \Xb
			\end{bmatrix}
			=				
			\sum_{i=1}^{k}\gl_i
			\begin{bmatrix}
				\w_i \\ \y_i
			\end{bmatrix}
			\begin{bmatrix}
				\w_i \\ \y_i
			\end{bmatrix}\T \hspace{-0.3cm}, 
		\end{align*}
		where $\y_i$ is the $i$-th column of $\Yb$ and $[\w_i\T,\y_i\T]\T\in\lrbr{1}\times\UU\times \R^n, \ i\in\irg{1}{k}$, so that the matrix-block is indeed an element of $\GG(\UU\times\R^n)$.
	\end{proof}

	\begin{thm}\label{thm:CTvsClassical}
		Let $\XX\coloneqq \lrbr{\x\in\R^n\colon \AA(\x\x\T,\x) = \b}$ such that $\AA$ is of the form \eqref{eqn:AARepresentation} and fulfills \eqref{eqn:Nec2} and let $f$ fulfill \cref{asm:CompactU} with $h$ being the zero function. Let $g$ be as in \cref{thm:ConcaveTentOverExtreme} and define $\TT\coloneqq\lrbr{\ggl\in\R^m\colon g_{c}(.,\ggl) \mbox{ is concave}}$. Then $g(\x)\leq g_{c}(\x,\ggl),\ \forall \x\in\R^n,\  \forall \ggl\in\TT$.
	\end{thm}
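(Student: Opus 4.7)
The plan is to use the decomposition machinery of \cref{lem:GGUUdecomp} to turn any feasible matrix block in the conic program defining $g(\x)$ into a convex combination of rank-one blocks indexed by points $(\u_k,\x_k)\in\UU\times\R^n$. Along this decomposition, the objective splits into a sum of expressions that are individually upper bounded by $f(\x_k)$ because each $\u_k$ is feasible for the supremum defining $f(\x_k)$. The equality constraint $\AA(\Xb,\x)=\b$ then lets me swap each $f(\x_k)$ for $g_c(\x_k,\ggl)$ without changing the sum, and Jensen's inequality applied to the concave $g_c(\cdot,\ggl)$ closes the argument.

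Concretely, I would fix $\x\in\R^n$ and any feasible $(\Psi,\u,\Ub,\Xb)$, assemble the full matrix block $\Mb$, observe that $\Mb\in\CC\subseteq\SS^{n+q+1}_+$ and that its extended upper-left $(q{+}1)\times(q{+}1)$ block is in $\GG(\UU)$, and invoke \cref{lem:GGUUdecomp} to write $\Mb=\sum_{k=1}^{K}\mu_k \y_k\y_k\T$ with $\y_k=[1,\u_k\T,\x_k\T]\T$, $\u_k\in\UU$, $\x_k\in\R^n$, $\mmu\in\Delta_K$. Reading off individual blocks yields $\x=\sum_k\mu_k\x_k$, $\Xb=\sum_k\mu_k\x_k\x_k\T$, $\Psi=\sum_k\mu_k\x_k\u_k\T$, $\Ub=\sum_k\mu_k\u_k\u_k\T$, and $\u=\sum_k\mu_k\u_k$. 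Substituting into the objective of \eqref{eqn:CT1} and using $\u_k\in\UU$ gives $\Ab\bullet\Xb+2\a\T\x+2\Bb\T\bullet\Psi+\Cb\bullet\Ub+2\cc\T\u=\sum_k\mu_k[\x_k\T\Ab\x_k+2\a\T\x_k+2\u_k\T\Bb\x_k+\u_k\T\Cb\u_k+2\cc\T\u_k]\leq\sum_k\mu_k f(\x_k)$.

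From here, the constraint $\AA(\Xb,\x)=\b$ combined with the blockwise identities and linearity of $\AA$ gives $\sum_k\mu_k(\AA(\x_k\x_k\T,\x_k)-\b)=\oo$. Plugging this into the definition of $g_c$ yields $\sum_k\mu_k f(\x_k)=\sum_k\mu_k g_c(\x_k,\ggl)$, after which concavity of $g_c(\cdot,\ggl)$ (the content of $\ggl\in\TT$) together with $\x=\sum_k\mu_k\x_k$ and Jensen's inequality produce $\sum_k\mu_k g_c(\x_k,\ggl)\leq g_c(\x,\ggl)$. Taking the supremum over feasible quadruples concludes $g(\x)\leq g_c(\x,\ggl)$.

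The main technical nuisance I expect is verifying that \cref{lem:GGUUdecomp} delivers an \emph{exact} finite decomposition of $\Mb$ rather than only a limit of such decompositions, which is essential because the argument above manipulates honest finite sums. Compactness of $\UU$ (and therefore of $\GG(\UU)$ by Proposition~4 of \cite{noauthor_gabl_nodate}) makes the upper-left block an exact finite convex combination of rank-one matrices from $\lrbr{1}\times\UU$; the orthonormal-alignment trick already used inside the proof of \cref{lem:GGUUdecomp}, together with Carath\'eodory's theorem in $\SS^{n+q+1}$, then lifts this to a finite representation of all of $\Mb$ on points $\y_k\in\lrbr{1}\times\UU\times\R^n$. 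Once this exactness is in place, the rest of the chain is routine and no limiting argument is needed.
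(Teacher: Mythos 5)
Your proof is correct, and it takes a genuinely different and more direct route than the paper's. The paper proves the bound by first representing the closed proper concave function $g_c(\cdot,\bar{\ggl})$ as an infimum of affine majorants, then showing $g_c=\hat{g}_c$ (an SDP-lifted version carrying the penalty term) via the lifting identity \eqref{thm:chlifting} together with \cref{lem:GGUUdecomp}, deducing from properness that $\Ab-\sum_i\bar{\gl}_i\Ab_i$ is negative semidefinite, and finally descending to $g$ through a min--max inequality and weak conic duality applied to an inner SDP over the multipliers. You instead decompose an arbitrary feasible block of the program defining $g(\x)$ into an exact finite convex combination of rank-one matrices on $\lrbr{1}\times\UU\times\R^n$, bound the objective termwise by $f(\x_k)$, use linearity of $\AA$ and $\AA(\Xb,\x)=\b$ to trade $\sum_k\mu_k f(\x_k)$ for $\sum_k\mu_k g_c(\x_k,\ggl)$ at no cost, and close with Jensen. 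This is shorter, isolates exactly where concavity of $g_c(\cdot,\ggl)$ and the constraint $\AA(\Xb,\x)=\b$ enter, and avoids the duality detour entirely; both arguments ultimately rest on the same exact-finite-decomposition content of \cref{lem:GGUUdecomp}, and you correctly flag that this exactness lives in the lemma's proof (via compactness of $\UU$ and the orthonormal-alignment step) rather than in its statement, which only asserts membership in a closed convex hull. The only things you lose relative to the paper are the intermediate identity $g_c=\hat{g}_c$ and the semidefiniteness conclusion on $\Ab-\sum_i\bar{\gl}_i\Ab_i$, which are byproducts of the paper's route rather than ingredients you need.
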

	\begin{proof}
		Since $g$ becomes larger with $\CC$ it suffices to discuss the case $\CC=\SS_+^{n+q+1}$.
		If $\TT =\emptyset$ the statement is trivially true, so assume otherwise that $\bar{\ggl}\in\TT$. By \cref{prop:continuity}, $f$ is upper semicontinuous and nowhere infinity so that $g_c(\,.\,,\bar{\ggl})$ is closed proper concave function. Thus, by \cite[Theorem 12.1]{rockafellar_convex_2015} we have $g_c(\,.\,,\bar{\ggl}) = \inf_{[\v\T,v_0]\T\in\VV}\lrbr{\v\T\x+v_0}$, for some set $\VV\subseteq \R^{n+1}$. We will now show that in fact $g_c(\x,\bar{\ggl}) = \hat{g}_c(\x,\bar{\ggl})$ where 
		\begin{align*}
			\hat{g}_c(\x,\bar{\ggl}) \coloneqq \sup_{\Psi,\u,\Ub,\Xb} \lrbr{
				\begin{array}{l}
					\Ab\bullet\Xb+2\a\T\x+2\Bb\T\bullet\Psi+\Cb\bullet\Ub+2\cc\T\u - \bar{\ggl}\T\left(\AA(\Xb,\x)-\b\right)
					\colon
					\\
					\begin{bmatrix}
						1  & \u\T & \x\T \\
						\u & \Ub  & \Psi\T \\
						\x & \Psi & \Xb
					\end{bmatrix} \in \SS_+^{n+q+1}, \
					\begin{bmatrix}
						1  & \u\T \\
						\u & \Ub   
					\end{bmatrix}\in\GG(\UU),
				\end{array}				
			}.
		\end{align*}
		We have "$\leq$" since for any $\x$ we have a solution $\u\in\UU$ to the inner supremum defining $f$ and $\Xb=\x\x\T, \ \Psi= \x\u\T, \ \Ub = \u\u\T$ is a feasible solution to the supremum defining $\hat{g}_c$.  For the converse, consider that  $g_c(\x,\bar{\ggl})= \inf_{[\v\T,v_0]\T\in\VV}\lrbr{\v\T\x+v_0}$ implies
		\begin{align*}
			& \
			g_c(\x,\bar{\ggl})
			\leq
			\v\T\x+v_0,
			\quad 
			\forall \x\in\R^n,\ [\v\T,v_0]\T\in\VV,\\
			\Leftrightarrow
			&
			\sup_{\x\in\R^n}
			\lrbr{
				g_c(\x,\bar{\ggl})-\v\T\x
			}
			\leq
			v_0,
			\quad 
			\forall 
			[\v\T,v_0]\T\in\VV,
		\end{align*}
		and by \eqref{thm:chlifting} we can reformulate 
		\begin{align*}
			\hspace{-1cm}
			\sup_{\x\in\R^n}\lrbr{g_c(\x,\bar{\ggl})-\v\T\x} 
			&
			= 
			\sup_{\x,\Psi,\u,\Ub,\Xb} \lrbr{
				\begin{array}{l}
					\Ab\bullet\Xb+2\a\T\x+2\Bb\T\bullet\Psi+\Cb\bullet\Ub+2\cc\T\u-\bar{\ggl}\T\AA(\Xb,\x)-\v\T\x
					\colon
					\\
					\begin{bmatrix}
						1  & \u\T & \x\T \\
						\u & \Ub  & \Psi\T \\
						\x & \Psi & \Xb
					\end{bmatrix} \in \GG(\UU\times\R^n), \
				\end{array}				
			}\\
			&
			=
			\sup_{\x\in\R^n}\lrbr{\hat{g}_c(\x,\bar{\ggl})-\v\T\x},
		\end{align*}
		where the second equality is valid since the matrix constraint involving $\GG(\UU\times\R^n)$ can be replaced by the matrix constraints in the definition of $\hat{g}_c(\x,\bar{\ggl})$ by \cref{lem:GGUUdecomp}. Thus, we get 
		\begin{align*}
			&
			\sup_{\x\in\R^n}
			\lrbr{
				\hat{g}_c(\x,\bar{\ggl})-\v\T\x
			} = 
			\sup_{\x\in\R^n}
			\lrbr{
				g_c(\x,\bar{\ggl})-\v\T\x
			}\leq v_0, \quad 
			\forall 
			[\v\T,v_0]\T\in\VV,\\
			\Leftrightarrow
			& \
			\hat{g}_c(\x,\bar{\ggl})
			\leq
			\v\T\x+v_0,
			\quad 
			\forall \x\in\R^n,\ [\v\T,v_0]\T\in\VV,
		\end{align*}
		so that indeed $\hat{g}_c(\x,\bar{\ggl})\leq \inf_{(\v\T,v_0)\T\in\VV}\lrbr{\v\T\x+v_0} = g_c(\x,\bar{\ggl})$. In addition we can also see that the supremum defining $\hat{g}_c$ is unbounded if there is an $\Xb\in\SS_+^n$ such that $\Ab\bullet\Xb-\sum_{i=1}^{m}(\bar{\ggl})_i\Ab_i\bullet\Xb>0$, as this $\Xb$ would span an improving ray. But since $\hat{g}_c\leq g_c<\infty$ as $g_c$ is proper, we see that our assumptions imply $-(\Ab-\sum_{i=1}^{m}(\bar{\ggl})_i\Ab_i)\in\SS_+^n$. Thus, for any $\x\in\R^n$, we get 
		\begin{align*}
			\hspace{-0.5cm}		
			g_c(\x,\bar{\ggl})
			\geq &
			\inf_{\ggl\in\R^m}
			\lrbr{
				g_c(\x,\ggl)
				\colon 
				-(\Ab-\sum_{i=1}^{m}\gl_i\Ab_i)\in\SS_+^n
			}\\
			=
			&
			\inf_{\ggl\in\R^m}
			\lrbr{
				\sup_{\Psi,\u,\Ub,\Xb}\lrbr{\Ab\bullet\Xb+\dots}
				\colon 
				-(\Ab-\sum_{i=1}^{m}\gl_i\Ab_i)\in\SS_+^n
			}\\
			\geq
			&
			\sup_{\Psi,\u,\Ub,\Xb}
			\lrbr{
				\Ab\bullet\Xb+\dots
				+\inf_{\ggl\in\R^m}
				\lrbr{
					-\ggl\T
					\left(\AA(\Xb,\x)-\b\right)
					\colon 
					\sum_{i=1}^{m}\gl_i\Ab_i-\Ab\in\SS_+^n
				}
				\colon 
				\dots
			},
		\end{align*}  
		where the first inequality is due to the feasibility of $\bar\ggl$ in the right-hand side infimum. The next equation is due to $g_c = \hat{g}_c$ and the definition of the latter.  Finally, the second inequality is due to the min-max inequality. 
		The inner infimum in the last problem can, by weak conic duality, be underestimated via its dual given by $\sup_{\Yb\in\SS_+^n}\lrbr{\Ab\bullet\Yb\colon \AA(\Xb+\Yb,\x)=\b }$, which we can plug in to obtain the underestimator
		\begin{align*}
			g_c(\x,\bar{\ggl})
			&
			\geq
			\sup_{\Psi,\u,\Ub,\Xb,\Yb}
			\lrbr{
				\Ab\bullet(\Xb+\Yb)+\dots
				\colon 
				\AA(\Xb+\Yb,\x) = \b, \ \Yb\in\SS_+^n,\ 
				\dots
			}\\
			&
			\geq 
			\sup_{\Psi,\u,\Ub,\Xb}
			\lrbr{
				\Ab\bullet\Xb+\dots
				\colon 
				\AA(\Xb,\x) = \b, \
				\dots
			}=g(\x),
		\end{align*}
		where the second inequality stems from the fact that we effectively forced the constraint $\Yb =\Ob$, thus restricting the feasible set. In total we get $g(\x)\leq g_{c}(\x,\bar{\ggl})$ as desired.  
	\end{proof}

	\section{Supergradients based on conic duality}\label{sec:Super-derivatives based on conic duality}
	A powerful feature of the convex tents we introduced in previous sections is that obtaining a supergradient at a certain point comes for free with evaluating the convex tent at that point by conic duality theory, although there are some caveats to this statement. In this section, we will discuss this matter in greater detail. First, let us introduce the conic duals of the concave tents we have discussed so far explicitly.
	
	For convenience of notation, we define the closed convex cone 
	\begin{align*}
		\bar{\CC}
		\coloneqq 
		\lrbr{
			\begin{bmatrix}
				x_0  & \u\T & \x\T \\
				\u & \Ub  & \Psi\T \\
				\x & \Psi & \Xb
			\end{bmatrix} \in \mathrm{cone}\left(\CC\right)\colon \
			\begin{bmatrix}
				x_0  & \u\T \\
				\u & \Ub   
			\end{bmatrix}\in\mathrm{cone}(\GG_{\UU})\
		}\subseteq \SS^{n+q+1}_+,
	\end{align*}
	so that the different versions of $g$ discussed before can be cast as a linear conic optimization with a single conic constraint involving $\bar{\CC}$. In addition we need to define two linear maps $\AA_x\colon \R^n\rightarrow\R^m$ and $\AA_X\colon\SS^n\rightarrow\R^m$ such that $\AA(\Xb,\x) = \AA_X(\Xb)+\AA_x(\x)$ and we denote their adjoints by $\AA_X^*$ and $\AA_x^*$ respectively. 
	Replacing the supremum in $g$ with its dual yields its dual characterization:
	\begin{align*}
		g(\x)
		=  h(\x)+
		\inf_{
			\substack{\w\in\R^n,\, \ga\in\R,\\ \ggl\in\R^m}
		}
		\lrbr{
			\ga+\b\T\ggl+(2\a+2\w-\AA_x^*(\ggl))\T\x  
			\colon 
			\begin{bmatrix}
				\ga  & -\cc\T & \w\T \\
				-\cc & -\Cb  & -\Bb\T \\
				\w & -\Bb & \AA_X^*(\ggl)-\Ab
			\end{bmatrix}\in\bar{\CC}^*
		}
	\end{align*}
	where $\bar{\CC}^*\coloneqq \lrbr{\Mb\in\SS^{n+q+1}\colon \Mb\bullet\Yb \geq 0,\ \forall \Yb\in\bar{\CC}}$ denotes the dual cone of $\CC$. 
	Note that under the assumptions of \cref{thm:ConcaveTentOverExtreme} the feasible set of the supremum of the primal characterization of $g$ is always compact so that it enjoys zero duality gap by \cite[Proposition 2.8]{shapiro_duality_2001} as discussed in the proof of \cref{thm:ConcaveTentOverExtreme}. The following theorem shows that if we can solve the dual to $\epsilon$-optimality, we obtain an $\epsilon$-supergradient from the respective coefficients of $\x$ in the dual representation. We again restrict ourselves to the case where $h$ is the zero function. Otherwise, one can use the theorem in combination with standard supergradient calculus (specifically linearity of the supergradient map, see \cite{rockafellar_convex_2015}). We avoided such a consideration in our theorem for simplicity's sake. 
	
	\begin{thm}\label{thm:EpsilonSuperDerivatives}
		Consider the primal and dual characterizations of $g$ and assume that $h$ is the zero function. Let $\bar{\x}\in\dom(g)$ and let $(\w,\ga,\ggl)$ be a feasible solution for the dual characterization of $g(\bar{\x})$. Then $\bar{\y}\coloneqq (2\a+2\w-\AA_x^*(\ggl)) \in\partial_{\epsilon}\g(\bar{\x})$, the set of $\epsilon$-supergradients of $g$ at $\bar{\x}$, with $\epsilon = \ga+\b\T\ggl+\bar{\x}\T\bar{\y} - g(\bar{\x})\geq 0$, i.e. the optimality gap.
	\end{thm}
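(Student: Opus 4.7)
The plan is to exploit the structure of the dual problem, specifically that the conic constraint does not involve $\x$: in the dual characterization of $g$, the variable $\x$ appears only as a coefficient in the linear objective $\ga+\b\T\ggl+(2\a+2\w-\AA_x^*(\ggl))\T\x$, while feasibility is governed solely by the matrix condition in $\bar{\CC}^*$. This $\x$-independence of dual feasibility is the crux of the argument and is what makes dual solutions at one point informative about values of $g$ at other points.

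First I would recall the definition of an $\epsilon$-supergradient of a concave function: $\bar{\y}\in\partial_{\epsilon} g(\bar{\x})$ if and only if $g(\x)\leq g(\bar{\x})+\bar{\y}\T(\x-\bar{\x})+\epsilon$ for every $\x$ in the domain. Next I would observe that any $(\w,\ga,\ggl)$ that is feasible for the dual of $g(\bar{\x})$ is also feasible for the dual of $g(\x)$ for any other $\x$, precisely because the matrix inclusion in $\bar{\CC}^*$ does not depend on $\x$. Invoking weak conic duality then yields the global upper bound
\begin{align*}
g(\x)\leq \ga+\b\T\ggl+\bar{\y}\T\x,\quad \forall \x\in\R^n,
\end{align*}
where $\bar{\y}=2\a+2\w-\AA_x^*(\ggl)$.

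Specializing this inequality at $\x=\bar{\x}$ gives $g(\bar{\x})\leq \ga+\b\T\ggl+\bar{\y}\T\bar{\x}$, which both proves $\epsilon=\ga+\b\T\ggl+\bar{\x}\T\bar{\y}-g(\bar{\x})\geq 0$ and identifies this nonnegative quantity as the duality gap at $\bar{\x}$ attained by the given feasible dual triple. Subtracting the specialization from the global inequality yields
\begin{align*}
g(\x)\leq g(\bar{\x})+\bar{\y}\T(\x-\bar{\x})+\epsilon,\quad \forall\x\in\R^n,
\end{align*}
which is exactly the defining property of $\bar{\y}\in\partial_\epsilon g(\bar{\x})$.

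There is essentially no obstacle here beyond carefully invoking weak duality; the main conceptual point to emphasize is the $\x$-independence of the dual feasible set. I would also briefly note that, under \cref{thm:ConcaveTentOverExtreme}, strong duality holds so the dual optimum is attained and $\epsilon\to 0$ as one solves the dual to optimality, which makes the result practically meaningful for supergradient computation. The reduction to the case $h\equiv 0$ mentioned in the theorem's preamble is justified by the linearity of the supergradient operator, so no extra work is required on that front.
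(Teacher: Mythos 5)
Your proposal is correct and follows essentially the same route as the paper's proof: weak conic duality plus the observation that dual feasibility is independent of $\x$ gives the affine global upper bound $g(\x)\leq \ga+\b\T\ggl+\bar{\y}\T\x$, and adding and subtracting $g(\bar\x)+\bar\y\T\bar\x$ yields the $\epsilon$-supergradient inequality. Your explicit specialization at $\x=\bar\x$ to justify $\epsilon\geq 0$ is a small but welcome addition that the paper leaves implicit.
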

	\begin{proof}
		For an arbitrary $\x\in\R^n$ we have
		$
		g(\x)\leq \ga+\b\T\ggl+\x\T\bar{\y}
		$
		by weak conic duality and dual feasibility of $(\w,\ga,\ggl)$, which is independent of $\x$. Then, the following holds:
		\begin{align*}
			g(\x) & \leq \x\T\bar{\y} +\ga+\b\T\ggl 
			= \x\T\bar{\y} +\ga+\b\T\ggl +g(\bar{\x})-g(\bar{\x}) + \bar{\y}\T\bar{\x}-\bar{\y}\T\bar{\x}\\
			&= g(\bar{\x}) + \bar{\y}\T(\x-\bar{\x}) + \bar{\y}\T\bar{\x} +\ga+\b\T\ggl - g(\bar{\x})
			= g(\bar{\x}) + \bar{\y}\T(\x-\bar{\x}) + \epsilon, \ \forall \x\in\R^n.
		\end{align*}
	\end{proof} 
	
	There are open questions regarding $\epsilon$-supergradients obtained by solving the dual problem. Neither is the optimal solution necessarily unique nor is $\partial_{\epsilon}\g(\bar{\x})$ always a singleton and neither is the case for $\epsilon\neq 0$. It is unclear how the multiplicity of $\epsilon$-optimal solutions relates to the multiplicity of $\epsilon$-supergradients and, more importantly, there is no straightforward method of obtaining multiple solutions from a conic problem, if they exist. We will not investigate this ordeal any further here as it is a challenging one. Hopefully, we will be able to pursue it in future research.  
	
	\begin{rem}
		We gave a presentation of the dual in relatively abstract terms and although we will discuss an application of our theory in the next section, the dual characterization of the concave tent employed there will not be stated explicitly. For readers who want to learn the mechanics of deriving duals, we refer to \cite{roos_universal_2020}. A small example of such a dual was already provided as \eqref{eqn:SmallDual} presented in \cref{exmp:ConcaveTents}. In addition, we would like to point out that modern conic solvers such as Mosek report the dual variables associated with primal constraints automatically. Thus, a dual solution can be extracted from the primal formulation without the need to dualize it explicitly. We found that not only is this more convenient, but in our experiments eliciting the dual values in this manner was numerically much more stable, as working with the dual directly often led to Mosek reporting numerical problems.  
	\end{rem}

	\section{A primal heuristic for robust quadratic discrete optimization}\label{sec:A primal heuristic}
	In this section, we will apply concave tents to a certain type of robust quadratic binary problems with quadratic index, to derive a primal heuristic for this problem. We will first introduce the problem and derive a finite convex relaxation. The latter will typically yield fractional solutions and we will discuss a classical rounding scheme for obtaining close feasible solutions. As an alternative to this scheme, we then introduce a primal heuristic that exploits concave tents. In the succeeding section, we compare the performance of both "rounding" methods in a B\&B framework, where they are used to produce upper bounds the quality of which impacts the computational effort of the B\&B approach. 
	
	\subsection{Robust quadratic optimization over discrete sets}
	The problem we are interested in is given by 
	\begin{align}\label{eqn:Robust01QP}
		\begin{split}					
			v^* &\coloneqq 
			\min_{\x\in\XX}					
			\lrbr{		
				f(\x)\coloneqq 	
				\x\T\Ab\x+2\a\T\x+
				\sup_{\u\in\UU}
				\lrbr{
					2\u\T\Bb\x+\u\T\Cb\u+2\cc\T\u
				} 
			},
			\\
			\XX &\coloneqq
			\lrbr{\x\in\lrbr{-1,1}^n\colon u\geq \e\T\x\geq l},
		\end{split}
	\end{align}	
	which are discussed for example in \cite{bomze_interplay_2021,gokalp_robust_2017,xu_improved_2023}. They deal with optimizing over a set of binary vectors a quadratic function whose linear and constant parts are uncertain and depend on an uncertainty vector $\u$ that lives in an uncertainty set $\UU\subseteq \R^q$. We will focus on the case where the uncertainty set is a ball, i.e.
	\begin{align*}
		\UU \coloneqq \lrbr{\u\in\R^q\colon \|\u\|_2^2\leq 1}, 
		\mbox{ so that } 
		\GG(\UU) = 
		\lrbr{
			\begin{bmatrix}
				1 & \u\T \\ \u & \Ub
			\end{bmatrix}\in\SS_+^{q+1}
			\colon 
			\Ib\bullet\Ub \leq 1 
		},
	\end{align*}
	by the S-Lemma (see \cite{yakubovich_s-procedure_1971} or more closely related \cite{burer_gentle_2015}, also  \cite{xu_improved_2023} discuss this and other uncertainty sets  $\UU$ for which $\GG(\UU)$ has an exact tractable representation). Using classical techniques from \cite{ben-tal_adjustable_2004} (more generally discussed in \cite{bomze_interplay_2021}) we can reformulate the objective as  
	\begin{align}\label{eqn:fisanSDP}
		f(\x)&=	    		
		\x\T\Ab\x+2\a\T\x+
		\sup_{(\u,\Ub)\in\GG(\UU)}
		\lrbr{
			\Cb\bullet\Ub+2(\Bb\x+\cc)\T\u
		}\nonumber\\
		&=   		
		\x\T\Ab\x+2\a\T\x+
		\sup_{\u\in\R^q,\,\Ub\in\SS^{q+1}}
		\lrbr{
			\Cb\bullet\Ub+2(\Bb\x+\cc)\T\u
			\colon 
			\Ib\bullet\Ub\leq 1,\
			\begin{bmatrix}
				1&\u\T\\ \u & \Ub
			\end{bmatrix}\in\SS_+^{q+1}
		}\nonumber\\
		&=   		
		\x\T\Ab\x+2\a\T\x+
		\inf_{\ga\in\R,\, \gl\in\R}
		\lrbr{
			\ga-\gl
			\colon 
			\begin{bmatrix}
				\ga & (\Bb\x+\cc)\T\\
				\Bb\x+\cc & -\Cb-\gl\Ib
			\end{bmatrix}\in\SS^{q+1}_+, \ 
			\gl \leq 0 
		}, 
	\end{align}
	so that $f$ can be evaluated by solving a positive semidefinite optimization problem. We can plug this characterization into \eqref{eqn:Robust01QP} which yields,
	\begin{align}\label{eqn:Robust01QPSDP}
		\inf_{\x,\gl,\ga}
		\lrbr{
			\x\T\Ab\x+2\a\T\x+
			\ga-\gl			 
			\colon 
			\begin{array}{l}
				u\geq \e\T\x\geq l, \ 
				\x\in\lrbr{-1,1}^n,\\
				\begin{bmatrix}
					\ga & (\Bb\x+\cc)\T\\
					\Bb\x+\cc & -\Cb-\gl\Ib
				\end{bmatrix}\in\SS^{q+1}_+, \ 
				\gl \leq 0 
			\end{array}			
		},
	\end{align}
	hence, a binary SDP that can be tackled for example via B\&B discussed for example in \cite{gally_framework_2018}. 
	They rely on lower bounds based on convex relaxations which in our case can be constructed as follows:
	\begin{align}\label{eqn:RobustSemiRelaxation}
		\min_{\x,\Xb,\gl,\ga}
		\lrbr{
			\Ab\bullet\Xb+2\a\T\x+\ga-\gl 
			\colon 
			\begin{bmatrix}
				\ga & (\Bb\x+\cc)\T\\
				\Bb\x+\cc & -\Cb-\gl\Ib
			\end{bmatrix}\in\SS^{q+1}_+,\ \gl \leq 0,\
			(\x,\Xb)\in\GG(\XX)
		},
	\end{align}
	which is still intractable since it involves the intractable set $\GG(\XX)$. We propose the following approximation inspired by the discussion in \cite{burer_copositive_2012}. First, we elongate $\x$ by slack variables $\s\in\R^{2}_+$ to obtain $\z\coloneqq [\x\T,\s\T]\T$ and defining $\e_u\coloneqq [\e\T,1,0]$ and $\e_l\coloneqq [\e\T,0,-1]$ we can replace the inequalities by equations $\e_u\T\z = u, \ \e_l\T\z = l$. Also, define $\II_1 \coloneqq \irg{1}{n}$ and $\II_2 \coloneqq \irg{n+1}{n+2}$ and we see that 
	\begin{align*}
		\hspace{-1cm}
		\GG_{\XX}
		\coloneqq  
		\lrbr{
			(\x,\Xb)\in\R^n\times \SS^n 
			\colon 
			\begin{bmatrix}
				1 & \z\T \\ \z & \Zb
			\end{bmatrix}\in\SS_+^{n+3},
			\begin{array}{l}
				\e_u\T\z = u, \ \e_l\T\z = l,\
				\e_u\e_u\T\bullet\Zb = u^2,\ \e_l\e_l\T\bullet\Zb = l^2,\\
				(\z)_i=(\x)_i,\ i\in\II_1,\ (\Zb)_{ij}=(\Xb)_{ij}, \ (i,j)\in\II_1^2,\\
				(\z)_i\geq0,\ i\in\II_2,\ (\Zb)_{ij}\geq0, \ (i,j)\in\II_2^2,\ \diag(\Xb) = \e
			\end{array}
		},
	\end{align*}
	gives a tractable outer approximation of $\GG(\XX)$ (just check that a $\z$ associated with an  $\x\in\XX$ gives a $\Zb = \z\z\T$ associated with an $(\x,\Xb)\in\GG(\XX)\cap\GG_{\XX}$, the containment then follows from convexity and the definition of $\GG(\XX)$). Replacing $\GG(\XX)$ by $\GG_{\XX}$ gives a positive semidefinite relaxation that can be solved in polynomial time. Henceforth, we will refer to this relaxation as \eqref{eqn:RobustSemiRelaxation} for convenience. 
	
	The vector $\x$ in a solution of this SDP is not necessarily binary and it is desirable to have a rounding procedure that produces a feasible solution to upper bound the original problem. Both, good upper bounds and lower bounds are of great value for B\&B-based machinery since good quality upper bounds allow "bad" branches of the B\&B tree to be pruned quickly (see \cite[Section 7.3.]{gally_framework_2018}). An easy way to generate an upper bound from a nonbinary solution $\x_0$ to \eqref{eqn:RobustSemiRelaxation} is to look for the closest feasible solution by solving
	\begin{align}\label{eqn:RobustClosestFeasible}
		\min_{\x\in\R^n}
		\lrbr{
			\|\x_0-\x\|^2_2
			\colon 
			u\geq \e\T\x\geq l, \ 
			\x\in\lrbr{-1,1}^n
		},
	\end{align}
	which can be done via mixed integer quadratic optimization solvers, which usually employ a, potentially expensive, spatial B\&B strategy. However, this approach does not take into account any information on the objective function $f$. In addition, one needs to solve a mixed integer quadratic problem, which can be challenging even for well-developed solvers such as Gurobi. In the following section, we propose an alternative procedure based on concave tents where data on the objective function is utilized and calculations are cheaper at least in theory.  
	
	\begin{rem}
		Based on the discussion in \cite{burer_copositive_2009}, there is an immediate strengthening of $\GG_{\XX}$ where nonnegative slack variables for the redundant constraints $\e\geq \x \geq -\e$ are introduced and lifted in a similar manner as the slack variables $\s$ discussed above. This would increase the size of the psd block by $2n$, and parts of the lifted blocks can be subjected to nonnegativity constraints. In our experiments, we found that the computational burden of this construction would outweigh the benefit of obtaining tighter bounds substantially. This is not surprising, as modern conic solvers scale relatively poorly with the size of the psd block and often struggle if matrix variables are subject to both psd constraints and nonnegativity constraints at the same time. For this reason, we do not discuss this approach here any further. However, more powerful conic solvers might be able to tip the scale in favor of the more involved construction, which would be an interesting field of future investigations. 
	\end{rem}

	\begin{rem}
		In an earlier attempt to apply our methods to discrete robust optimization problems, we considered network flow problems with uncertain objective and ellipsoidal uncertainty sets, which are NP-hard even if the nominal problem is not (see \cite{bertsimas_robust_2003,bertsimas_robust_2004}). A reviewer from an earlier submission also pointed this out as an interesting use case. However, when we constructed the respective instance of $\GG_\XX$ (in a similar manner as described above) and calculated the respective lower bound and $\x$ we often found that it was extremely close to being binary, different rounding schemes always returned identical solutions and the branch and bound scheme terminated at or close to the root node. In contrast, the present application reliably exhibits poor performance of the lower bounding procedure, so that primal heuristics were much more impactful. Also, the two different rounding schemes have substantial effects on the performance of the branch and bound procedure. Hence, we decided to present only these results here. 
	\end{rem}
	
	\subsection{Heuristic upper bounds via concave tents }\label{sec:Heuristic} 
	Clearly, the function $f$ with ball uncertainty fulfills \cref{asm:CompactU} and $\XX$ is a set of extreme points of $\conv(\XX)$, so that by \cref{thm:ConcaveTentOverExtreme} we can derive a concave tent of $f$ over $\XX$ as follows 
	
	\begin{cor}\label{cor:CToverBoxQP}
		For $f$ and $\XX$ as defined in \eqref{eqn:Robust01QP} the following function is a concave tent that has an exact dual characterization and is finite for any $\x$ that is part of a feasible solution to \eqref{eqn:RobustSemiRelaxation}. 
		\begin{align*}
			\hspace{-0.8cm}
			g(\x)
			\coloneqq &
			\sup_{\Psi,\u,\Ub,\Xb}
			\lrbr{		
				\begin{array}{l}
					\Ab\bullet\Xb+2\a\T\x+2\Bb\T\bullet\Psi+\Cb\bullet\Ub+2\cc\T\u 
					\colon \\
					\begin{bmatrix}
						1  & \u\T & \x\T \\
						\u & \Ub  & \Psi\T \\
						\x & \Psi & \Xb
					\end{bmatrix} \in \SS^{n+q+1}_+,\
					\begin{array}{ll}
						(\x,\Xb)\in\GG_{\XX},& \hspace{-1.8cm}\| u\u- \Psi\T\e\|_2\leq u-\e\T\x,\\ 
						(\u,\Ub)\in\GG(\UU), & \hspace{-1.8cm}\| \hspace{0.05cm} l \hspace{0.05cm} \u- \Psi\T\e\|_2\leq \e\T\x-l,\\
						\|\u+\Psi\T\e_i\|_2 \leq 1+\e_i\T\x, & \forall  i\in\irg{1}{n}, \\
					\end{array}				
				\end{array}			
			}.
		\end{align*}
	\end{cor}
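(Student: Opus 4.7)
The plan is to apply \cref{thm:ConcaveTentOverExtreme} with $\CC$ taken as the set of matrices in $\SS^{n+q+1}_+$ whose $(\u,\Ub)$-block lies in $\GG(\UU)$, whose $(\x,\Xb)$-block lies in $\GG_{\XX}$, and which satisfy the three SOC constraints appearing in the corollary, and with linear map $\AA(\Xb,\x)\coloneqq\diag(\Xb)-\e$ and $\b\coloneqq\oo$. Since $\CC$ is cut out by PSD, SOC and linear conditions (with $\GG_{\XX}$ itself semidefinite-representable via auxiliary variables), it is closed and convex. What remains is to verify the sandwich $\GG(\UU\times\XX)\subseteq\CC$, to check conditions \eqref{eqn:Nec1}--\eqref{eqn:Nec2} for $\AA$, and to handle the two addenda about exact duality and finiteness.

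For the sandwich, since $\CC$ is closed and convex, it suffices to verify its defining constraints on the generating dyads $\y\y\T$ with $\y=[1,\u\T,\x\T]\T$, $\u\in\UU$, $\x\in\XX$; on these, $\Xb=\x\x\T$, $\Ub=\u\u\T$, and $\Psi=\x\u\T$. The PSD, $\GG(\UU)$ and $\GG_{\XX}$ conditions are immediate. Substituting $\Psi\T\e=(\e\T\x)\u$ and $\Psi\T\e_i=x_i\u$, the first SOC constraint becomes $\lvert u-\e\T\x\rvert\lVert\u\rVert_2\leq u-\e\T\x$, which holds since $\lVert\u\rVert_2\leq 1$ and $u\geq\e\T\x$; the second is entirely analogous; and the third reduces to $(1+x_i)\lVert\u\rVert_2\leq 1+x_i$, valid because $x_i\in\lrbr{-1,1}$ forces $1+x_i\geq 0$ while $\lVert\u\rVert_2\leq 1$.

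For the hypotheses on $\AA$, condition \eqref{eqn:Nec1} is immediate from $\diag(\x\x\T)=\e$ whenever $\x\in\lrbr{-1,1}^n$, while in the representation \eqref{eqn:AARepresentation} we have $\Ab_i=\e_i\e_i\T$ and $\a_i=\oo$, so \cref{lem:PSDsum} with $\ggl=\e$ yields $\sum_{i=1}^{n}\e_i\e_i\T=\Ib\in\SS^n_{++}$, giving \eqref{eqn:Nec2}. Hence \cref{thm:ConcaveTentOverExtreme} applies and $g$ is a concave tent of $f$ over $\XX$.

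The exact dual characterization follows from the compactness argument inside the proof of \cref{thm:ConcaveTentOverExtreme}: $\diag(\Xb)=\e$ together with PSDness bounds $\Xb$, the ball constraint $\Ib\bullet\Ub\leq 1$ inside $\GG(\UU)$ bounds $\Ub$ and $\u$, and PSDness then bounds $\Psi$, so the primal feasible set is compact for each $\x\in\dom(g)$ and zero duality gap follows from \cite[Proposition 2.8]{shapiro_duality_2001}. For finiteness, given a feasible $(\x,\Xb,\gl,\ga)$ for \eqref{eqn:RobustSemiRelaxation}, the choice $(\u,\Ub,\Psi)=(\oo,\Ob,\Ob)$ is primal-feasible in $g(\x)$: after moving the zero middle block to the bottom right, the matrix is block-diagonal with blocks $\begin{pmatrix} 1 & \x\T \\ \x & \Xb \end{pmatrix}$ (PSD by $(\x,\Xb)\in\GG_{\XX}$) and $\Ob\in\SS^q$; the constraint $\Ib\bullet\Ob\leq 1$ is trivial; and the three SOC constraints reduce to $0\leq u-\e\T\x$, $0\leq\e\T\x-l$, and $0\leq 1+x_i$, each inherited from $(\x,\Xb)\in\GG_{\XX}$ (the last via $\diag(\Xb)=\e$ and PSDness forcing $\lvert x_i\rvert\leq 1$). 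Combined with compactness, this yields $g(\x)\in\R$. The only genuine obstacle is the case-by-case bookkeeping needed to verify the three SOC constraints on the rank-one generators.
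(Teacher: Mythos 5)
Your proof is correct and follows essentially the same route as the paper: invoke \cref{thm:ConcaveTentOverExtreme} with $\AA$ given by the diagonal constraint, verify the second-order cone constraints on the rank-one generators by the same multiply-and-linearize argument, and get exact duality from compactness of the primal feasible set. Your finiteness witness $(\u,\Ub,\Psi)=(\oo,\Ob,\Ob)$ is a slightly cleaner special case of the paper's choice (which takes an arbitrary $\u\in\UU$ and therefore needs a rank-one decomposition of $(\x,\Xb)$ to certify positive semidefiniteness), but this is a minor simplification rather than a different approach.
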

	\begin{proof}
		We need to show that the requirements of \cref{thm:ConcaveTentOverExtreme} are met. Firstly, note that the constraint $\diag(\Xb) = \e$, hidden in $(\x,\Xb)\in\GG_{\XX}$, takes the role of $\AA(\Xb,\x) = \b$. We see that for an extreme point of $(\x,\Xb)\in\GG(\XX)$ we have $\diag(\Xb) = \diag(\x\x\T) = \x\circ\x = \e$ since $\x\in\lrbr{-1,1}^n$ and $\diag(\Xb) = \oo$ implies $\Xb = \Ob$ whenever $\Xb\in\SS_+^{n}$. Thus, \eqref{eqn:Nec1} and \eqref{eqn:Nec2} hold. The sum total of the constraints constitutes our chosen instances of $\CC$. The positive semidefiniteness constraint guarantees that $\CC\subseteq \SS_+^{n+q+1}$ as required. We need to show that the other constraints are valid on $\GG(\UU\times\XX)$. Since $\GG_{\XX}\supseteq\GG(\XX)$ the respective constraint as well as the constraint $(\u,\Ub)\in\GG(\UU)$ are valid. Thus, to show that $\CC$ engulfs $\GG(\UU\times\XX)$ we need to demonstrate that the additional second-order cone constraints are valid as well. These constraints have been heavily inspired by the perspectification-linearization techniques recently developed in \cite{noauthor_zhen_nodate}. We start by observing that the constraints $\|\u\|_2\leq 1$ and $u-\e\T\x\geq 0$ are valid on $\UU$ and $\XX$ respectively. We can multiply the left-hand side of the latter with both sides of the former to observe the following equivalence
		\begin{align*}
			(u-\e\T\x)\|\u\|_2\leq (u-\e\T\x),
			\Leftrightarrow&
			\|u\u-\u\x\T\e\|_2\leq (u-\e\T\x),
		\end{align*} 
		so that the latter inequality yields another constraint valid on $\UU\times\XX$. This step is followed by a linearization step where $\x\u\T$ is replaced by $\Psi\in\R^{n\times q}$. This shows that our constraint is indeed valid on $\GG(\UU\times\XX)$ as it is valid on the extreme points of that set since for these we have $\Psi = \x\u\T$ with $[\u\T,\x\T]\T\in\UU\times\XX$. The second soc (second order cone) constraint is constructed analogously where we multiply the valid constraints $\|\u\|_2\leq 1$ and $\e\T\x-l\geq 0$. Also, the final set of constraints stems from an analogous construction where we multiplied again the constraint $\|\u\|_2\leq 1$ with each of the $n$ constraints $1+\e_i\T\x\geq 0, \ i \in\irg{1}{n}$ which are valid on $\XX$ since $(\x)_i \geq -1, \ i \in\irg{1}{n}$ due to the binarity constraints.   
		Finally, this primal characterization of $g$ has a compact feasible set as \eqref{eqn:Nec2} and \cref{asm:CompactU} hold so that the duality gap vanishes. For any $\x$ that is part of feasible solution of \eqref{eqn:RobustSemiRelaxation} there is an $\Xb\in\SS^n$ such that $(\x,\Xb)\in\GG_{\XX}\subseteq\SS_+^{n+1}$ so that  $(\x,\Xb)= \sum_{i=1}^k\gl_i(\x_i,\x_i\x_i\T)$ for some $\ggl\in\Delta_k$ and $\x_i\in\R^n, \ i \in\irg{1}{k}$. We also get $u\geq\e\T\x\geq l$ from the linear constraints in $\GG_{\XX}$ and $\x\in[0,1]^n$ is implied by the psd constraints and $\diag(\Xb) = \e$. Pick $\u\in\UU$ and set $\Ub = \u\u\T$ and $\Psi = \x\u\T$, so that $(\u,\Ub)\in\GG(\UU)$ . We see that the
		\begin{align*}
			\hspace{-0.5cm}
			\begin{bmatrix}
				1  & \u\T & \x\T \\
				\u & \Ub  & \Psi\T \\
				\x & \Psi & \Xb
			\end{bmatrix}
			=
			\begin{bmatrix}
				1  & \u\T & \sum_{i=1}^{k}\gl_i\x_i\T \\
				\u & \u\u\T  & \sum_{i=1}^{k}\gl_i\u\x_i\T \\
				\sum_{i=1}^{k}\gl_i\x_i & \sum_{i=1}^{k}\gl_i\x_i\u\T & \sum_{i=1}^{k}\gl_i\x_i\x_i\T
			\end{bmatrix}
			= \sum_{i=1}^{k}
			\gl_i
			\begin{bmatrix}
				1  & \u\T & \x_i\T \\
				\u & \u\u\T  & \u\x_i\T \\
				\x_i & \x_i\u\T & \x_i\x_i\T
			\end{bmatrix}\in\SS_+^{n+q+1}.\
		\end{align*}
		We also have  
		\begin{align*}
			\| u\u- \Psi\T\e\|_2 = \| u\u- (\sum_{i=1}^{k}\gl_i\x_i\u\T)\T\e\|_2 = \|\u(u-\e\T\x)\|_2 = (u-\e\T\x)\|\u\|_2\leq u-\e\T\x,
		\end{align*}   
		and the other soc constraints are validated in a similar manner so that $(\Psi,\u,\Ub,\Xb)$ is a feasible solution for $g(\x)$ which is thus finite. 
	\end{proof}
	
	\begin{rem}
		We want to point out that the soc constraints that strengthen $\CC$ in the concave tent described above cannot be used to strengthen the psd-relaxation \eqref{eqn:RobustSemiRelaxation} since $\Psi$ does not appear there. This demonstrates that our theory makes the techniques developed in \cite{noauthor_zhen_nodate} more broadly applicable.
	\end{rem}
	
	By \cref{thm:EpsilonSuperDerivatives}, any $\eps$-optimal feasible solution for the dual yields a vector $\y_{\eps}\in\R^n$ that is an $\eps$-supergradient of $g$, where $\eps$ can be made arbitrarily small. This supergradient can be used to construct a feasible solution, say $\x_{ub}$, to \eqref{eqn:Robust01QP} by solving 
	$
	\min_{\x\in\R^n}\lrbr{\y_{\eps}\T\x\colon u\geq \e\T\x\geq l, \ 
		\x\in\lrbr{-1,1}^n},
	$ 
	which can be solved using MILP solvers such as Gurobi. Based on this idea we propose the following procedure that is a problem-data-driven alternative to the rounding in \eqref{eqn:RobustClosestFeasible}: 
	
	\begin{algorithm}[H]
		\SetAlgoLined
		\KwResult{$\x_{ub}, \ f(\x_{ub})$}
		input $\eps>0$
		and  $\x_{rel}$ from an optimal solution of \eqref{eqn:RobustSemiRelaxation},\;
		find $\eps$-optimal solution dual characterization of $g(\x_{rel})$, to obtain $\y_{\eps}$, \;
		obtain $\x_{ub} \in \arg\min_{\x\in\R^n}\lrbr{\y_{\eps}\T\x\colon u\geq \e\T\x\geq l, \ 
			\x\in\lrbr{-1,1}^n}$, \;
		calculate $f(\x_{ub})$ from \eqref{eqn:fisanSDP}.
		\caption{\footnotesize{Primal heuristic.}
			\label{alg:PrimalHeuristic}
		}
	\end{algorithm}
	
	\begin{rem}
		Since $\epsilon$ can be made arbitrarily small, we never used it as an input in our numerical experiments which we will describe in \cref{sec:Numerical}. Rather, we solved the conic problem to the optimality tolerance of the conic solver to make $\epsilon$ as small as possible, which is desirable since a smaller $\epsilon$ corresponds to a more faithful model of the concave tent.  
	\end{rem}
	
	This procedure is designed in the spirit of \cref{thm:ConcaveReformulations} a), as we try to approximate local solutions to our problem by minimizing the concave tent locally and approximately by minimizing its linear model at $\x_{rel}$ over $\XX$. One could iterate these steps in a Frank-Wolfe fashion, but this would be computationally prohibitive. Note, that linear optimization over $\XX$ is equivalent to linear optimization over $\conv(\XX)$, and when solving the mixed integer linear problem in \cref{alg:PrimalHeuristic}, we typically exploit valid cuts for this convex hull. Thus, the algorithm exemplifies that concave tents allow us to use knowledge about $\conv(\XX)$ to derive upper bounds for nonconvex optimization problems over $\XX$. 
	
	We also see that in contrast to \eqref{eqn:RobustClosestFeasible} we only need to solve a conic problem and then a MILP, both of which are much more easily handled than an MIQP. In addition, the concave tent exploits information about the objective, while such information is absent from \eqref{eqn:RobustClosestFeasible}. However, since the conic optimization problem involves psd-constraints the computational burden can still be substantial. Before proceeding we will illustrate our constructions in this the and previous section in a small example.  
	
	\begin{exmp}
		For this example, we consider the case $n=2,\ q=2,\ u = 1, \ l = 0$ in which case $\XX = \lrbr{[1,-1]\T,[-1,1]\T}$ so that its convex hull is the respective line segment. We created a random instance of $f$, but we omitted to give the precise values for the data as this is not important for the example. What is important is that we constructed two different concave tents: one that follows the construction in \cref{cor:CToverBoxQP} and a second one that omits the additional second-order cone constraints in that construction. In \cref{fig:GoodvsBad} we depict $f$ in purple and the two concave tents and red and green both from the side and the top. The smaller concave tent is the one with the additional constraints in $\CC$, depicted in the right column of the figure. The convex hull of $\XX$ is the line segment in red. 		
		\begin{figure}[h]
			\centering{
				\begin{tabular}{cc}				
					\includegraphics[scale=0.35]{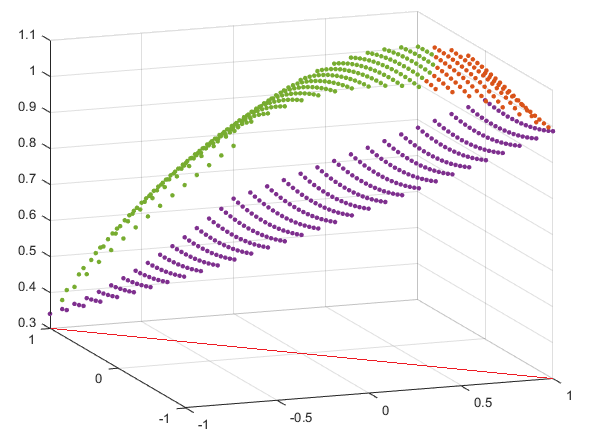} & 
					\includegraphics[scale=0.35]{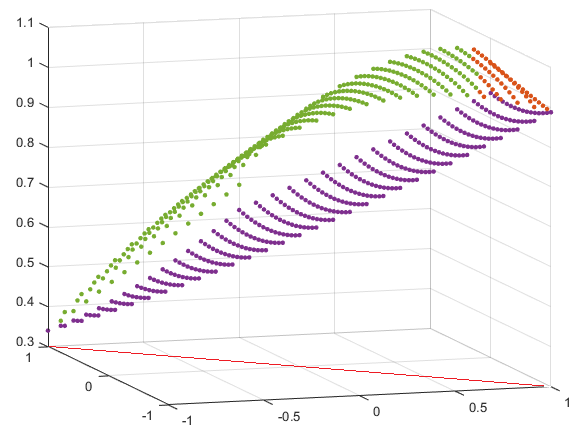}\\
					\includegraphics[scale=0.35]{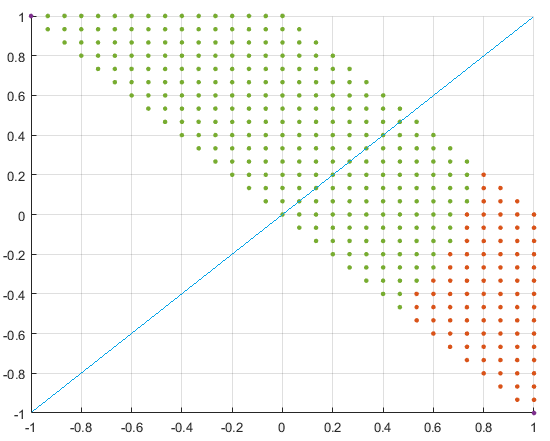} & 
					\includegraphics[scale=0.35]{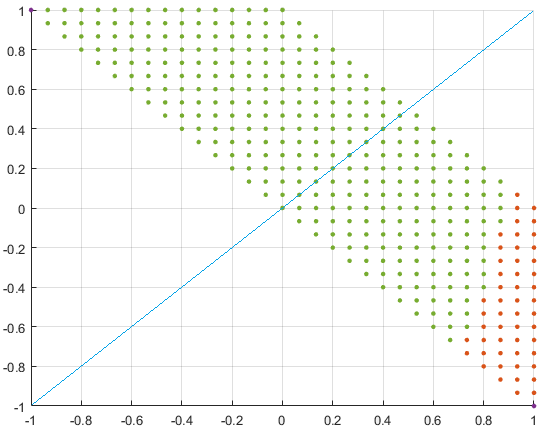}
				\end{tabular}
			}				
			\caption{Comparing concave tents}
			\label{fig:GoodvsBad}		
		\end{figure}
		
		Firstly, we see that both concave tents agree with $f$ on $\XX$, i.e.\ the two endpoints of the line segment. Secondly, we also see that both are finite on $\XX_{rel}\coloneqq\lrbr{\x\in[-1,1]^2\colon 1\geq \e\T\x\geq 0}$ since $\GG_{\XX}$ is not strong enough to cut off spurious parts of the effective domain of the concave tents. The most interesting part is the coloration of the two concave tents. We started \cref{alg:PrimalHeuristic} from different grid-points in $\XX_{rel}$ instead of starting it from an optimal solution to \eqref{eqn:RobustSemiRelaxation} and checked whether the point obtained from this procedure is the global minimizer of $f$ over $\XX$, i.e. the leftmost point on the line segment. If so, the respective point was colored green, otherwise it was colored red. We see that the concave tent with the additional constraints is more likely to guide our heuristic toward the minimizer. In the top-view images, we have also added a blue line that separates the points that are closer to the left-most point, i.e. the minimizer, from those that are closer to the rightmost point. We see that the classical rounding based on \eqref{eqn:RobustClosestFeasible} would have rounded many points to the undesirable rightmost point. 		
		
		Of course, this behavior is not guaranteed, and it depends on the geometry of both $\XX$ and $f$. The only thing we can guarantee for the smaller concave tent is that it does not have more local minimizers than the larger one, which is of course beneficial for our primal heuristic. In our experiments discussed in the next section, the numerical benefits of including the second-order cone constraints were substantial, which is why these were used exclusively.  		
	\end{exmp}
	
	\section{Numerical evidence in a B\&B framework} \label{sec:Numerical}
	In this section, we will present results from numerical experiments where we tested the procedure described in the previous section. The experiments were carried out on a laptop with an Intel Core i5-9300H CPU with 2.40GHz and 16 GB of RAM. As a modeling interface we used YALMIP \cite{lofberg_yalmip_2004}, as a conic solver we used Mosek 9.2 and as a MILP- and MIQP-solver we used Gurobi 9.1. We made heavy use of Mosek's ability to report optimal dual variables from primal formulations to
	avoid working with the duals explicitly. This reduced the complexity of the implementation but
	also aided numerical stability in our experience. 
	
	In our experiments, we solve \eqref{eqn:Robust01QP} via a B\&B procedure which we implemented ourselves. For the lower bounds, we use \eqref{eqn:RobustSemiRelaxation}, which typically yields fractional solutions. The idea is to 'round' these via our primal heuristic and our experiment aims at finding an advantage of this procedure compared to the traditional rounding described in \eqref{eqn:RobustClosestFeasible}. The question is whether there is a difference in the number of nodes explored and, as a possible consequence, in the overall running time of the solver. Better upper bounds can lead to earlier pruning so that at least the former is plausible, while the latter is additionally affected by the computational burden of evaluating the concave tent and solving the MILP in \cref{alg:PrimalHeuristic} versus solving the MIQP in \eqref{eqn:RobustClosestFeasible}.  
	
	As a branching strategy we used a "last-in-first-out" principle, where after branching and solving the two new subproblems, we choose the leave with the better lower bound to enter the list of open nodes last. Whenever a subproblem was solved, the so obtained fractional solution was used to construct a feasible integer solution either via our primal heuristic or via traditional rounding.  
	
	We generated instances in the following manner: let $\phi_{\MM}$ be the uniform distribution over a set $\MM$. For the matrices $\Ab\in\SS^n$ and $\Cb\in\SS^q$ we generated matrices $\tilde{\Ab}\in\R^{n\times n}, \ (\tilde{\Ab})_{ij}\sim \phi_{[-0.5,0.5]}$ and $\tilde{\Cb}\in\R^{q\times q}, \ (\tilde{\Cb})_{ij}\sim \phi_{[-0.5,0.5]}$ and set $\Ab = (\tilde{\Ab}+\tilde{\Ab}\T)/n^2, \ \Cb = (\tilde{\Cb}+\tilde{\Cb}\T)/q^2$. We set the matrix $\Bb = \tilde{\Bb}/(qn)\in\R^{q\times n}$ where $(\tilde{\Bb})_{ij} \sim \phi_{[0,1]}$. For the vectors we generate $\a\in\R^n$ and $\cc\in\R^q$ such that $(\a)_i \sim \phi_{[-0.5,0.5]/n^2}$ and $(\cc)_i \sim \phi_{[0,1]/q^2}$
	
	The results of our experiments are summarized in \cref{tbl:Results}. We considered various instance types with different values for $n$, $q$, $l$, and $u$. For each of those types, we generated $5$ instances at random as described above. We solved these instances via our B\&B implementation where the upper bounds were either generated via classical rounding (results under "\textbf{R}") or with our primal heuristic (results under "\textbf{CT}"). We compare the performance based on the number of nodes investigated before convergence and on the total solution time. The table represents only a subset of instances we tested and we chose to include a range of instances that summarizes all the phenomena that we have encountered in our experiments.

	\begin{table}[h]
		\centering{
			\begin{tabular}{r||rr|rr}	
				\multicolumn{1}{c||}{\textbf{Instances}}& \multicolumn{ 2}{c|}{\textbf{Node count}} & \multicolumn{ 2}{c}{\textbf{Solution time }} \\ 
				\multicolumn{1}{c||}{$n\_q\_l\_u$}& \multicolumn{1}{c}{\textbf{R}} & \multicolumn{1}{c|}{\textbf{CT}} & \multicolumn{1}{c}{\textbf{R} } & \multicolumn{1}{c}{\textbf{CT}} \\ \hline
				\textbf{30\_10\_-5\_5:} 1 & 365 & 369 & \textbf{245,32} & 298,34 \\ 
				2 & \textbf{409} & 413 & \textbf{279,46} & 330,89 \\ 
				3 & \textbf{195} & 289 & \textbf{123,38} & 234,85 \\ 
				4 & 1045 & 1045 & 1069,50 & 925,71 \\ 
				5 & \textbf{1053} & 1135 & \textbf{962,93} & 1095,39 \\ \hline
				\textbf{30\_20\_10\_15:} 1 & 77 & \textbf{75} & \textbf{46,90} & 81,67 \\ 
				2 & 207 & 207 & \textbf{131,18} & 211,24 \\ 
				3 & 243 & \textbf{159} & \textbf{156,37} & 161,93 \\ 
				4 & 71 & 71 & \textbf{43,32} & 77,63 \\ 
				5 & 47 & 47 & \textbf{28,40} & 53,30 \\ \hline					   
				\textbf{30\_10\_12\_15:} 1 & 77 & 77 & \textbf{45,78} & 63,29 \\ 
				2 & 101 & 101 & \textbf{60,82} & 83,68 \\ 
				3 & 55 & \textbf{51} & \textbf{32,26} & 42,57 \\ 
				4 & 89 & \textbf{35} & 52,34 & \textbf{29,35} \\ 
				5 & 197 & \textbf{149} & 119,82 & \textbf{118,98} \\ \hline		
				\textbf{30\_5\_15\_20:} 1 & 1039 & 1039 & 874,54 & \textbf{692,62} \\ 
				2 & 431 & 431 & \textbf{288,26} & 295,92 \\ 
				3 & 1795 & 1795 & 1885,06 & \textbf{1181,89} \\ 
				4 & 3037 & \textbf{2529} & 4140,64 & \textbf{1643,06} \\ 
				5 & 1439 & 1439 & 1468,53 & \textbf{987,53} \\ \hline
				\textbf{30\_5\_5\_20:} 1 & 433 & 433 & \textbf{246,13} & 416,25 \\ 
				2 & 1877 & 1877 & 2102,73 & \textbf{1301,26} \\ 
				3 & 1087 & \textbf{701} & 969,04 & \textbf{497,52} \\ 
				4 & 4445 & \textbf{4441} & 7983,53 & \textbf{2974,38} \\ 
				5 & 1911 & 1911 & 2152,27 & \textbf{1311,57} \\ 
			\end{tabular}		
		}		
		\caption{Results of the experiments}
		\label{tbl:Results}	
	\end{table}
	In the first group of instances, the classic rounding approach mostly outperforms our approach in terms of both measures. We suspect that this is because the objective function seems to be relatively symmetrical around the origin so that the concave tents were relatively flat over $\conv(\XX)$. In effect, our Frank-Wolfe step was not propelled in beneficial directions. In the second group, the situation is more even in terms of nodes explored with the concave tent approach doing a little better, but the computation time mostly favored the classical approach even in instances where the node count favored the concave tent approach. In the third group node count favors the concave tents while computation time is mixed.  The results of both of these groups demonstrate that, even though the rounding procedure is based on an MIQP, solving the SDP for the concave tent approach can inflict a higher computational burden. Note, that the larger $q$ the bigger the matrix block in the concave tent computation, which shows in the overall computation time as well, which favored our primal heuristic for instances where $q$ was small. For example, in the fourth group computation time favored the concave tent approach while node counts were similar except for instance 4. This behavior continues in the fifth group where our heuristic performs well in both measures. We suspect that the looser linear constraints in the last group made it difficult for the Gurobi to solve the MIQPs since the linear relaxations it employs for globally solving it are weakened substantially. On the other hand, Gurobi's MILP solver and Mosek's barrier SDP solver were not affected by this as much in terms of computation time. Overall we can see that our approach can be beneficial depending on the problem at hand and in our estimations merits further investigation in future research. 
	
	\subsection*{Conclusion}
	\vspace{-0.5cm}
	In this article, we introduced concave tents and studied some theoretical properties as well as their practical construction for a large class of use cases. We investigated some useful aspects of these constructions and used these insights to develop a rounding heuristic for a certain type of robust optimization problem. When compared to a traditional rounding scheme this approach performs promisingly in a B\&B framework. We hope we can expand on this theory in the future.    
	\vspace{-0.6cm}
	\paragraph{Acknowledgments:} The research of M.G. is financially supported by the FWF project ESP 486-N. 
	\vspace{-0.5cm}
	\setlength{\bibsep}{0.5pt}
	\bibliography{Literature} 
	\bibliographystyle{abbrv}
	\appendix
	\vspace{-0.3cm}
	\section{Regularizing convex functions}\label{apx:RegularizingConveFunctions}
	As stated in \cref{sec:A versatile class of functions}, the bidual characterization of a closed, proper convex function, say $c$, almost fulfills \cref{asm:CompactU}, but the resulting instance of $\UU$ may lack the critical property of compactness. We will therefore switch to a regularized version of $c$ given by its so-called Pasch-Hausdorff envelope: 
	\begin{align*}
		c_{\gb}(\x) 
		\coloneqq 
		\inf_{\y\in\R^n} 
		\lrbr{
			c(\y)-\gb\|\x-\y\|_2
		} 
		= 
		\sup_{\u\in\R^n}\lrbr{\x\T\u-c_{\gb}^*(\u)}	
		= 
		\sup_{\u\in\R^n}\lrbr{\x\T\u-c^*(\u)\colon \|\u\|_2\leq \beta},
	\end{align*}  
	where the definition follows \cite[Section 12.3]{bauschke_infimal_2017} and the equations are due to \cite[Lemma 2.1]{beck_exact_2024}. Since $\dom c^*\cap \lrbr{\u\in\R^n\colon \|\u\|_2\leq \gb}$ is compact (as closed convex functions have closed effective domains) there is a $\hat{u}$ such that 
	\begin{align*}
		c_{\gb}(\x)  = \sup_{\u,u_0}\lrbr{\x\T\u-u_0\colon  
			\colon 
			\left[\u\T,u_0\right]
			\in\UU
		}, \quad 
		\UU
		\coloneqq
		\lrbr{
			\begin{bmatrix}
				\u \\ u_0
			\end{bmatrix}\in\R^{n+1}
			\colon 
			\begin{array}{l}
				c^*(\u)\leq u_0\leq \hat{u},\\
				\|\u\|_2\leq \beta,
			\end{array}			
		},				
	\end{align*}
	so that $c_{\gb}$ fulfills all the requirements of \cref{asm:CompactU}, since $c^*$ is closed and proper. While $c_{\gb}$ is a minorant in $c$ in general the following result states that for any compact $\XX\subseteq \dom c$  we can find $\gb$ such that $c_{\gb}(\x) = c(\x), \ \forall \x \in \XX$.
	\begin{thm}
		Let $c$ be a closed proper convex function and let $\XX$ be a compact subset of $\dom c$. Then there is a $\gb>0$ such that $c_{\gb} = c$ on $\XX$. 
	\end{thm}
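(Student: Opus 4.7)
The plan is to use the dual (conjugate) representation of $f_\gb$ spelled out in the excerpt, namely
\begin{align*}
f_\gb(\x) = \sup_{\u \in \R^n}\lrbr{\x\T\u - f^*(\u) : \|\u\|_2 \leq \gb},
\end{align*}
together with the Fenchel--Young equality. Since $f = f^{**}$ for closed proper convex $f$, dropping the ball constraint would recover $f(\x)$, so the inequality $f_\gb \leq f$ holds trivially on all of $\R^n$. For the reverse inequality on $\XX$, the key observation is that $\x\T\u - f^*(\u) = f(\x)$ whenever $\u \in \partial f(\x)$, hence $f_\gb(\x) \geq f(\x)$ as soon as $\partial f(\x) \cap \BB_\gb(\oo) \neq \emptyset$. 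The task therefore reduces to exhibiting a single finite $\gb$ such that every $\x \in \XX$ admits a subgradient of $f$ of norm at most $\gb$.

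To produce such a uniform bound, I would invoke the classical fact (Rockafellar, Theorem 24.7) that the subdifferential of a closed proper convex function is nonempty-valued and locally bounded on the interior of its effective domain. Concretely, each $\x_0 \in \interior(\dom f)$ has a neighborhood $\UU_{\x_0}$ together with a finite constant $M_{\x_0}$ for which $\partial f(\x) \subseteq \BB_{M_{\x_0}}(\oo)$ for all $\x \in \UU_{\x_0}$. Since $\XX$ is compact, finitely many such $\UU_{\x_0^i}$ cover it, and $\gb := \max_i M_{\x_0^i}$ supplies the desired uniform bound. Combining $f_\gb \leq f$ everywhere with $f_\gb(\x)\geq f(\x)$ for $\x\in\XX$ proves the equality on $\XX$.

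The main obstacle, and the reason the statement is not strictly correct as written, is the possibility that $\XX$ meets the relative boundary of $\dom f$. At such boundary points the subdifferential may contain vectors of arbitrarily large norm, or be empty altogether: for example $f(x) = -\sqrt{1-x^2}$ on $[-1,1]$ satisfies $f_\gb(\pm 1) < 0 = f(\pm 1)$ for every finite $\gb$. The natural remedy, entirely consistent with the use of the Pasch--Hausdorff envelope in \cref{sec:A versatile class of functions}, is to strengthen the hypothesis to $\XX \subseteq \interior(\dom f)$ (or to the relative interior with respect to the affine hull of $\dom f$), which is already the setting arising in practice, since one only needs a regularized surrogate that agrees with $f$ on the concrete compact feasible set over which $f$ is finite and well-behaved. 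Under this reading the outline above yields the result with no further technicalities.
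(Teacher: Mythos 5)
Your proposal is essentially correct, and it catches a real problem: the theorem as stated is false, and your counterexample $f(x)=-\sqrt{1-x^2}$ on $[-1,1]$ (extended by $+\infty$) with $\XX=\lrbr{1}$ refutes it. The paper's own proof is a one-line citation of \cite[Corollary 12.19]{bauschke_infimal_2017} together with the claim that ``all closed proper convex functions are Lipschitz continuous relative to any compact subset of their effective domain'' --- which is precisely the false assertion your example disproves; the correct classical statement (Rockafellar, Theorem 10.4) requires the compact set to lie in the \emph{relative interior} of $\dom f$. So you have not merely reproved the result by a different route, you have identified that the hypothesis must be strengthened to $\XX\subseteq\interior(\dom f)$ (or handled via the relative interior), and your remark that this is harmless for the paper's intended applications is accurate. (Incidentally, the displayed definition $f_{\gb}(\x)=\inf_{\y}\lrbr{f(\y)-\gb\|\x-\y\|_2}$ has a sign error; the dual characterization you work with corresponds to the infimal convolution with $+\gb\|\cdot\|_2$.)

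Your argument for the corrected statement is a genuinely different and more self-contained route than the paper's. Where the paper reduces everything to the Pasch--Hausdorff characterization ($f_{\gb}=f$ iff $f$ is $\gb$-Lipschitz) plus a Lipschitz bound, you work directly from the conjugate representation: $f_{\gb}\leq f^{**}=f$ by dropping the ball constraint, and $f_{\gb}(\x)\geq\x\T\u-f^*(\u)=f(\x)$ for any $\u\in\partial f(\x)$ with $\|\u\|_2\leq\gb$ by Fenchel--Young, so that a uniform subgradient bound over $\XX$ suffices; Rockafellar's Theorem 24.7 even gives boundedness of $\partial f(\XX)$ for compact $\XX\subseteq\interior(\dom f)$ in one step, without the covering argument. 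This buys a local statement (you only need \emph{one} small subgradient at each point of $\XX$, not global Lipschitzness of $f$ on its domain) at the cost of slightly more machinery. The only loose end is your parenthetical extension to $\relint(\dom f)$: at relative-interior points that are not interior points, $\partial f(\x)$ is nonempty but unbounded, so local boundedness of the subdifferential does not apply verbatim and one must pass to the restriction of $f$ to the affine hull of $\dom f$ (or to minimal-norm subgradients). Since the interior case already covers the paper's use of the result, this is a minor gap rather than a fatal one.
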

	\begin{proof}
		This is a direct consequence of \cite[Corollary 12.19]{bauschke_infimal_2017}, after realizing that all closed proper convex functions are Lipschitz continuous relative to any compact subset of their effective domain. 
	\end{proof}  
	The downside of this derivation is that it requires calculating an upper estimate $\gb$ for the Lipshitz constant. This can be difficult in general and a poor estimation might lead to weaker concave tents. We will defer the investigation of cases where this can be done sufficiently accurately to future research. 
	
\end{document}